\renewcommand{\epsilon}{\varepsilon}
\renewcommand{\phi}{\varphi}
\renewcommand{\kappa}{\varkappa}
\renewcommand{\tilde}{\widetilde}
\newcommand{\cal}{\mathcal}
\newcommand{\IE}{\mathbb{E}}
\newcommand{\I}{\mathbbm{1}}
\newcommand{\IN}{\mathbb{N}}
\newcommand{\IP}{\mathbb{P}}
\newcommand{\IZ}{\mathbb{Z}}
\newtheorem{theorem}{Theorem}[section]
\newtheorem{lemma}[theorem]{Lemma}
\newtheorem{corollary}[theorem]{Corollary}
\newtheorem{proposition}[theorem]{Proposition}
\newtheorem{remark}[theorem]{Remark}
\numberwithin{equation}{section}
\begin{document}
\title{Limit laws of transient excited random walks on integers}

\author{Elena Kosygina and Thomas Mountford} \thanks{\textit{2000
    Mathematics Subject Classification.}  Primary: 60K37, 60F05,
  60J80. Secondary: 60J60.}  \thanks{\textit{Key words:} excited
  random walk, limit theorem, stable law, branching process, diffusion
  approximation.  }
\begin{abstract}
  We consider excited random walks (ERWs) on $\IZ$ with a bounded
  number of i.i.d.\ cookies per site without the non-negativity
  assumption on the drifts induced by the cookies.  Kosygina and
  Zerner \cite{KZ08} have shown that when the total expected drift
  per site, $\delta$, is larger than 1 then ERW is transient to the
  right and, moreover, for $\delta>4$ under the averaged measure it
  obeys the Central Limit Theorem. We show that when $\delta\in(2,4]$
  the limiting behavior of an appropriately centered and scaled
  excited random walk under the averaged measure is described by a
  strictly stable law with parameter $\delta/2$. Our method also
  extends the results obtained by Basdevant and Singh \cite{BS08b}
  for $\delta\in(1,2]$ under the non-negativity assumption to the
  setting which allows both positive and negative cookies.

\medskip

\noindent{\sc R\'esum\'e.}  On consid\`ere des marches al\'eatoires
excit\'ees sur $\IZ$ avec un nombre born\'e de cookies i.i.d.\ \`a
chaque site, ceci sans l'hypoth\`ese de positivit\'e.  Auparavent,
Kosygina et Zerner \cite{KZ08} ont \'etabli que si le drift totale moyenne par
site, $\delta$, est strictement superieur \`a 1, alors la marche est
transiente (vers la droite) et, de plus, pour $\delta > 4$ il y a un
th\'eor\`eme central limite pour la position de la marche.  Ici, on
d\'emontre que pour $\delta \in (2, 4]$ cette position, convenablement
centr\'ee et r\'eduite, converge vers une loi stable de paramètre
$\delta / 2$.  L'approche permet \'egalement d'\'etendre les
r\'esultats de Basdevant et Singh \cite{BS08b} pour $\delta \in (1,2]$ \`a notre
cadre plus g\'en\'eral.
\end{abstract}
\maketitle

\section{Introduction and main results}

Excited random walk (ERW) on $\IZ^d$ was introduced by Benjamini and
Wilson in \cite{BW03}. They proposed to modify the nearest neighbor
simple symmetric random walk by giving it a positive drift
(``excitation'') in the first coordinate direction upon reaching a
previously unvisited site. If the site had been visited before, then the 
walk made unbiased jumps to one of its nearest neighbor sites.  See
\cite{Ko03}, \cite{BR07}, \cite{HH08} and references therein for
further results about this particular model.
 
Zerner (\cite{Ze05}, \cite{Ze06}) generalized excited random walks by
allowing to modify the transition probabilities at each site not just
once but any number of times and, moreover, choosing them according to
some probability distribution. He obtained the criteria for recurrence
and transience and the law of large numbers for i.i.d.\ environments
on $\IZ^d$ and strips and also for general stationary ergodic
environments on $\IZ$. It turned out that this generalized model had
interesting behavior even for $d=1$, and this case was further studied
in \cite{MPV06}, \cite{BS08a}, \cite{BS08b}.

Results obtained in all these works rely on the assumption that
projections of all possible drifts on some fixed direction are
non-negative. In fact, the branching processes framework introduced in
\cite{KKS75} for random walks in random environment ($d=1$) and
employed in \cite{BS08a}, \cite{BS08b} for excited random walks, 
does not depend on the positivity
assumption, and it seems natural to use this approach for extending the
analysis to environments which allow both positive and negative
drifts. This was done in \cite{KZ08}, where the authors discussed
recurrence and transience, laws of large numbers, positive speed, and 
the averaged central limit theorem for multi-excited random walks on
$\IZ$ in i.i.d.\ environments with bounded number of ``excitations''
per site.  We postpone further discussion of known results for $d=1$
and turn to a precise description of the model considered in this
paper.

Given an arbitrary positive integer $M$ let 
\begin{align*}
  \Omega_M:=\big\{((\omega_z(i))_{i\in\IN})_{z\in\IZ}
 \mid \, \omega_z(i)\in[0,1],\ &\text{for }
   i\in\{1,2,\dots,M\}\\
  \text{\ and}\  
  \omega_z(i)=1/2,\  &\text{for } i>M,\  z\in\IZ\big\}.
\end{align*}
An element of $\Omega_M$ is called a cookie environment. For each
$z\in\IZ$, the sequence $\{\omega_z(i)\}_{i\in\IN}$ can be thought of
as a pile of cookies at site $z$, and $\omega_z(i)$ is referred to as
``the $i$-th cookie at $z$''. The number $\omega_z(i)$ is equal to the
transition probability from $z$ to $z+1$ of a nearest-neighbor random
walk upon the $i$-th visit to $z$. If $\omega_z(i)>1/2$ (resp.
$\omega_z(i)<1/2$) the corresponding cookie will be called positive
(resp.\ negative), $\omega_z(i)=1/2$ will correspond to a ``placebo''
cookie or, equivalently, the absence of an effective $i$-th cookie at
site $z$.

Let $\IP$ be a probability measure on $\Omega_M$, which satisfies the
following two conditions:
\begin{itemize}
 \item [(A1)] Independence: the sequence 
$(\omega_z(\cdot))_{z\in\IZ}$ is i.i.d.\ under $\IP$.
\item [(A2)] Non-degeneracy: 
\[ \IE\left[\prod_{i=1}^M
  \omega_0(i)\right]>0\ \text{ and }\ \IE\left[\prod_{i=1}^M
  (1-\omega_0(i))\right]>0.\]
\end{itemize}
Notice that we do not make any independence assumptions on the cookies
at the same site.

It will be convenient to define our ERW model using a
coin-toss construction.  Let $(\Sigma,\mathcal{F})$ be some
measurable space equipped with a family of probability measures
$P_{x,\omega},\ x\in\IZ,\ \omega\in\Omega_M$, such that for each
choice of $x\in\IZ$ and $\omega\in\Omega_M$ we have $\pm 1$-valued
random variables $B_{i}^{(z)},\ z\in\IZ,\ i\ge 1,$ which are
independent under $P_{x,\omega}$ with distribution given by
\begin{equation}\label{B}
 P_{x,\omega}(B_{i}^{(z)}=1)=\omega_z(i)\quad\mbox{ and }\quad
P_{x,\omega}(B_{i}^{(z)}=-1)=1-\omega_z(i).
\end{equation} 
Let $X_0$ be a random variable on $(\Sigma,\mathcal{F},P_{x,\omega})$ such that
$P_{x,\omega}(X_0=x)=1$.  Then an ERW starting at
$x\in\IZ$ in the environment $\omega$, $X:=\{X_n\}_{n\ge 0}$, can be defined on
the
probability space $(\Sigma,\mathcal{F},P_{x,\omega})$ by the relation 
\begin{eqnarray}
X_{n+1}&:=&X_n+B_{\#\{r\le n\mid X_r=X_n\}}^{(X_n)},\quad n\ge 0.\label{x1}
\end{eqnarray}
Informally speaking, upon each visit to a site the walker eats a
cookie and makes one step to the right or to the left with
probabilities prescribed by this cookie. Since $\omega_z(i)=1/2$ for
all $i>M$, the walker will make unbiased steps from $z$ starting from
the $(M+1)$-th visit to $z$. 

Events $\{B_{i}^{(z)}=1\}$, $i\in\IN$, $z\in\IZ$, will be referred to
as ``successes'' and events $\{B_{i}^{(z)}=-1\}$ will be called
``failures''.

The consumption of a cookie $\omega_z(i)$ induces a drift of size
$2\omega_z(i)-1$ with respect to $P_{x,\omega}$. Summing up over all
cookies at one site and taking the expectation with respect to $\IP$
gives the parameter
\begin{equation}
  \label{D}
  \delta\ :=\ \IE\Bigg[\sum_{i\ge 1}(2\omega_0(i)-1)\Bigg]\ 
=\ \IE\left[\sum_{i=1}^M(2\omega_0(i)-1)\right],
\end{equation}
which we call the \textit{average total drift per site}. It
plays a key role in the classification of the asymptotic behavior of
the walk.

We notice that there is an obvious symmetry between positive
and negative cookies: if the environment $(\omega_z)_{z \in \IZ}$ is
replaced by $(\omega^\prime_z)_{z \in \IZ} $ where $\omega^\prime_z
(i) \ = \ 1- \omega_z (i)$, for all $i\in\IN,\ z\in\IZ$, then
$X':=\{X_n^\prime\}_{n \ge 0}$, the ERW corresponding to the new
environment, satisfies
\begin{equation}
X'\overset{\mathcal{D}}{=} -X,
\end{equation}
where $\overset{\mathcal{D}}{=}$ denotes the equality in distribution.  
Thus, it is sufficient to consider, say, only
non-negative $\delta$ (this, of course, allows both negative and 
positive cookies), and we shall always assume this to be the
case. 

Define the \textit{averaged} measure $P_x$ by setting $P_x(\ \cdot\
)=\IE\left(P_{x,\omega}(\ \cdot\ )\right)$. Below we summarize known
results about this model.
\begin{theorem}[\cite{KZ08}] Assume (A1) and (A2). 
  \begin{itemize}
  \item [(i)] If $\delta \in[0,1]$ then $X$ is recurrent, i.e.\ for
    $\IP$-a.a.\ $\omega$ it returns $P_{0,\omega}$-a.s.\ infinitely
    many times to its starting point. If $\delta >1$ then $X$ is
    transient to the right, i.e.\ for $\IP$-a.a.\ 
    $\omega$, $X_n\to\infty$ as $n\to\infty$ $P_{0,\omega}$-a.s..
  \item [(ii)] There is a deterministic $v\in[0,1]$ such that $X$
    satisfies for $\IP$-a.a.\ $\omega$ the strong law of large numbers,
  \begin{equation}\label{speed}
    \lim_{n\to\infty}
\frac{X_n}{n}=v\quad P_{0,\omega}\text{-a.s..}
  \end{equation}
Moreover, $v=0$ for $\delta\in[0,2]$ and $v>0$ for $\delta >2$.
\item [(iii)] If $\delta>4$ then the sequence
  \[B_t^n:=\frac{X_{[tn]}-
  [tn] v}{\sqrt{n}},\quad \text{ $t\ge 0$}
\]
converges weakly under $P_0$ to a non-degenerate Brownian
motion with respect to the Skorohod topology on the space of
c\'adl\'ag functions.
  \end{itemize}
\end{theorem}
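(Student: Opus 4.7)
The plan is to reduce the study of $X$ to a backward branching-process representation of hitting times, following the Kesten--Kozlov--Spitzer scheme adapted to ERW by Basdevant--Singh. Set $T_n = \inf\{k \geq 0 : X_k = n\}$, and for $0 \leq i < n$ let $U_i^n$ be the number of jumps from $i+1$ to $i$ performed by $X$ before time $T_n$. Counting up- and down-crossings of each edge gives $T_n = n + 2\sum_{i=0}^{n-1} U_i^n$. Using the independence of the coin flips in the construction \eqref{B}, the sequence $(U_{n-1}^n, U_{n-2}^n, \ldots, U_0^n)$, read from right to left, is a Markov chain on $\IN$: given $U_i^n = k$ and the cookies $\omega_i(\cdot)$, the value $U_{i-1}^n$ is the number of failures among the sequence of coin flips at site $i$ required to accumulate $k+1$ successes. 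Because $\omega_z(j) = 1/2$ for $j > M$, this chain is, asymptotically in its state, a Galton--Watson process with critical $\mathrm{Geom}(1/2)$ offspring law (mean one, variance two), perturbed once per generation by a bounded environment-dependent contribution coming from the first $M$ cookies.

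For part (i), transience to the right is equivalent to $T_n < \infty$ almost surely for all $n$, which reduces to non-explosion of the backward chain. A direct computation identifies the asymptotic drift of the chain: $\IE[U_{i-1}^n - U_i^n \mid U_i^n = k, \omega_i]$ is, on average over the environment and for $k$ large, approximately $1 - \delta$. Thus the chain is positive recurrent iff $\delta > 1$, which yields (i) once the critical case $\delta = 1$ is placed on the recurrent side by a Lyapunov argument. For part (ii), once transience holds, an i.i.d.\ regeneration structure built at the successive record maxima of $X$ yields an LLN for $T_n$, hence for $X_n$, with speed $v$ satisfying $1/v = 1 + 2\IE_\pi[U]$, where $\pi$ is the stationary distribution of the backward chain. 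A moment computation or comparison with a Galton--Watson process with immigration gives $\IE_\pi[U] < \infty$ precisely when $\delta > 2$, so $v>0$ iff $\delta > 2$, while for $\delta \in (1,2]$ the stationary law has infinite mean and $v = 0$.

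For part (iii), the averaged CLT for $X_n$ is obtained by proving a CLT for $T_n$ and inverting. One applies a Markov-chain CLT to the partial sum $\sum_{i=0}^{n-1} U_i^n$, which requires $\IE_\pi[U^2] < \infty$ together with geometric ergodicity of the backward chain. The second-moment condition is the crux: a Yaglom-type tail analysis of $\pi$ shows that under $\pi$ the variable $U$ has a power-law tail of index $\delta/2$, so $\IE_\pi[U^k] < \infty$ iff $\delta > 2k$, matching $\delta > 4$ for the CLT. Inversion of the hitting-time CLT to a position CLT, together with tightness over intermediate times, is standard once $v > 0$. The main obstacle throughout is to pin down the exact tail exponent of $\pi$ in terms of $\delta$: the backward chain is not a pure Galton--Watson process because of the cookie perturbation at the top of each generation, and one must either construct a Lyapunov function of the form $u \mapsto u^{\delta/2}$, compare with a Galton--Watson process with immigration, or pass through a diffusion approximation in the large-state regime. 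This tail estimate is the single place where the specific value of $\delta$ enters, and it simultaneously produces the thresholds $\delta = 2$ in (ii) and $\delta = 4$ in (iii).
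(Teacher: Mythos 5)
Your overall framework — backward branching-process representation, identification of the drift $1-\delta$ at large states, regeneration at returns to~$0$, comparison with geometric offspring, diffusion approximation for tails — is precisely the scheme of \cite{KZ08} (and \cite{BS08b}), which is what this cited theorem rests on; parts (i) and (ii) are outlined correctly, including the speed formula $1/v = 1 + 2\IE_\pi[U]$, which agrees (via Kac's cycle formula) with the excursion decomposition the paper sets up in Section~\ref{bp}.

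Part (iii), however, contains two compensating errors. First, the backward chain is \emph{not} geometrically ergodic: its return time $\sigma^V_0$ has only a polynomial tail of index $\delta$ (this is Theorem~\ref{X}), which rules out geometric ergodicity, so one cannot invoke a geometric-ergodicity CLT with the condition $\IE_\pi[U^2]<\infty$. The correct tool is the regenerative CLT over i.i.d.\ excursions, exactly as in Section~\ref{1from2}: the relevant moment condition is $E[(S^V)^2]<\infty$ where $S^V=\sum_{j<\sigma_0}V_j$ is the \emph{total progeny} over one excursion, not the stationary second moment of the per-step state. Second, your tail exponent for $\pi$ is wrong: the invariant law of the backward chain has tail index $\delta-1$, not $\delta/2$. (One sees this from the scale/speed computation for the approximating diffusion $dY=(1-\delta)\,dt+\sqrt{2Y}\,dB$, whose invariant density is $\propto y^{-\delta}$, or directly from Kac's formula together with the occupation estimates of Lemma~\ref{main}.) With the correct exponent, $\IE_\pi[U^2]<\infty$ holds already for $\delta>3$, so your stated criterion would give the wrong threshold. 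The exponent $\delta/2$ is the tail index of $S^V$ (Theorem~\ref{progeny}), so $E[(S^V)^2]<\infty\Leftrightarrow\delta>4$; that is where the threshold $4$ actually comes from, and your conclusion lands there only because the misidentified tail exponent cancels the misidentified moment condition. You should replace the geometric-ergodicity argument by the regeneration CLT and track the tail of the excursion total, not of the stationary law.
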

This theorem does not discuss the rate of growth of the ERW when it is
transient but has zero linear speed ($1<\delta\le 2$). It also
leaves open the question about fluctuations when $\delta\le 4$.

The rate of growth of the transient cookie walk with zero linear speed
was studied in \cite{BS08b} for the case of deterministic spatially
homogeneous non-negative cookie environments. For further discussion
we need some notation for the limiting stable distributions
that appear below. Given $\alpha\in (0,2]$ and $b>0$, denote by
$Z_{\alpha,b}$ a random variable (on some probability space) whose
characteristic function is determined by the relation
  \begin{equation}
    \label{sl}
    \log Ee^{iuZ_{\alpha,b}}=
    \begin{cases}
      -b|u|^\alpha\left(1-i\frac{u}{|u|}\tan 
\left(\frac{\pi\alpha}{2}\right)\right),
      &\text{if } \alpha\ne
      1;\\
      -b|u|\left(1+\frac{2i}{\pi}\frac{u}{|u|}\log|u|\right),&\text{if }
      \alpha=1.
    \end{cases}
  \end{equation}
  Observe that $Z_{2,b}$ is a centered normal random variable
  with variance $2b$. The weak convergence with respect to $P_0$ will
  be denoted by $\Rightarrow$.
\begin{theorem}[\cite{BS08b}] \label{BS} Let $\omega_z(i)=p_i\in[1/2,1)$,
  $i\in\IN$ for all $z\in\IZ$, where $p_i=1/2$ for $i>M$, 
and $\delta$ be as in (\ref{D}), that is
  $\delta=\sum_{i=1}^M(2p_i-1)$.
 \begin{itemize}
 \item [(i)] If $\delta\in(1,2)$ then there is a positive constant $b$
   such that as
   $n\to\infty$ \[\frac{X_n}{n^{\delta/2}}\Rightarrow(Z_{\delta/2,b})^{-\delta/2}.\]
 \item [(ii)] If $\delta=2$ then  $(X_n\log n)/n$ converges in
   probability to some constant $c>0$.
 \end{itemize}
 The above results also hold if $X_n$ is replaced by $\sup_{i\le n}
 X_i$ or $\inf_{i\ge n}X_i$.
\end{theorem}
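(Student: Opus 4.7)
The plan is to use the Kesten--Kozlov--Spitzer \cite{KKS75} branching-process representation of hitting times, adapted to cookie environments as in \cite{BS08b}. First I would set $T_n := \inf\{m \geq 0 : X_m = n\}$ and let $D_k^n$ denote the number of right-to-left crossings of the edge $(k-1,k)$ before time $T_n$. A standard edge-counting identity gives
\[ T_n \ =\ n + 2\sum_{k=1}^{n} D_k^n. \]
Read in reversed spatial direction, $(D_{n-j}^n)_{0 \leq j \leq n}$ should be a Markov chain of branching-process-with-immigration type: each ``particle'' corresponds to a down-step from some site $k$, and its ``offspring'' count is the number of further down-steps from $k$ before the walker finally escapes $k$ upward. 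Past the $M$-th visit to $k$, an individual's offspring law is $\mathrm{Geom}(1/2)$; the first $M$ visits use cookie-modified laws.

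The key technical step would then be to show that the stationary distribution $\pi$ of this branching process has polynomial tail
\[ \pi([x,\infty)) \ \sim\ c\, x^{-\delta/2} \qquad (x \to \infty), \]
with exponent $\delta/2 \in (1/2, 1]$. Under the non-negative-cookie assumption of Theorem \ref{BS} the cookies shift the offspring mean below $1$, so the process is subcritical, and the exponent $\delta/2$ is the positive root of a functional equation for the offspring generating function, obtained by a Kesten-type implicit renewal argument that is tractable here because the post-cookie law is explicitly geometric. In particular the mean of $\pi$ is \emph{infinite} when $\delta < 2$.

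Granted this tail, the sum $\sum_k D_k^n$ lies in the domain of attraction of the one-sided $(\delta/2)$-stable law. Mixing from the i.i.d.\ cookies across sites together with near-stationarity of the branching process away from the boundary $k=n$ should then yield
\[ n^{-2/\delta}\, T_n \ \Rightarrow\ c'\, Z_{\delta/2, b}. \]
Inverting via $\{X_n \geq x\} = \{T_x \leq n\}$ converts this into $X_n / n^{\delta/2} \Rightarrow (Z_{\delta/2, b})^{-\delta/2}$, absorbing $c'$ into the parameter $b$. The boundary case $\delta=2$ has tail exponent $1$ with logarithmic truncated mean, which produces the $n/\log n$ scaling and convergence in probability to a constant. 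Extending from $X_n$ to $\sup_{i \leq n} X_i$ and $\inf_{i \geq n} X_i$ uses transience of $X$ to the right to show that the oscillations of $X$ around these extrema are negligible on the scale $n^{\delta/2}$.

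I expect the hard step to be the tail asymptotics in the second paragraph: the first $M$ offspring laws of the branching process differ from the bulk geometric law, so standard implicit-renewal theorems do not apply off the shelf. In \cite{BS08b} this is handled by an explicit generating-function calculation that exploits the form of the geometric post-cookie law; this is also precisely the step that must be re-worked in the present paper to accommodate signed cookies, since a negative cookie forces an enlargement of the branching process's state space and a correspondingly more delicate analysis of the functional equation determining the exponent.
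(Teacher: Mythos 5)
Your high-level framework---the $T_n = n + 2\sum_k D_k^n$ identity, the branching-process-with-immigration reading of the reversed array, heavy-tail analysis of that chain, a stable limit for $T_n$, and inversion to $X_n$---is exactly the one used in \cite{BS08b} and in the present paper. The extension to $\sup_{i\le n} X_i$ and $\inf_{i\ge n} X_i$ is also handled as you anticipate (here via Lemma~\ref{inf}). However, the central technical step has a genuine gap.

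First, the claimed tail of the stationary distribution is wrong, and more importantly it is the wrong object. The stationary marginal of the chain $V$ (equivalently, of $(D^n_{n-j})_j$ away from the boundary) has tail exponent $\delta-1$, not $\delta/2$: by the diffusion scaling used in the paper, the probability that an excursion of $V$ from $0$ ever exceeds level $x$ decays like $x^{-\delta}$ (condition (C)), and conditionally on reaching $x$ the time spent at levels of order $x$ is itself of order $x$, so the occupation time of $[x,\infty)$ per excursion is of order $x^{1-\delta}$; dividing by the finite mean excursion length gives $\pi([x,\infty)) \asymp x^{1-\delta}$. The exponent $\delta/2$ is instead the tail exponent of the \emph{total progeny} over one excursion, $S^V = \sum_{j<\sigma^V_0} V_j$ (Theorem~\ref{progeny}): reaching level $x$ costs $x^{-\delta}$ and contributes roughly $x\cdot x = x^2$ to the area, so $P(S^V > y) \asymp y^{-\delta/2}$. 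The two exponents coincide only at $\delta = 2$. Because the $(V_j)$ are strongly clustered (a visit to height $x$ lasts $\asymp x$ steps), the stable index governing $\sum_j V_j$ is determined by the i.i.d.\ excursion block sums $S_i$, not by the marginal tail; passing directly from the marginal tail to the stable limit is not valid.

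Second, the mechanism you propose for producing the tail---a Kesten-type implicit renewal argument for a fixed point of a random multiplicative recursion, or equivalently a functional equation whose ``positive root'' is the exponent---does not apply here. That mechanism is native to random walks in random environment, where the offspring law of the branching process is itself random through the environment and the tail comes from products of random variables crossing $1$. For excited random walk, once the cookies are averaged out, $V$ is an ordinary Markov chain with a \emph{deterministic} transition kernel: geometric offspring for all particles beyond the $M$-th, with a fixed finite-range modification for the first $M$. There is no random recursion and no Kesten renewal equation. The polynomial tail arises instead because the chain is near-critical: from (\ref{diffV})--(\ref{drift}), $V_{k+1}-V_k$ has mean $1-\delta$ and variance $\approx 2V_k$, so $V$ behaves like a squared Bessel diffusion of negative dimension $2(1-\delta)$, and the tails of $\sigma^V_0$ and $S^V$ come from the scaling of rare large excursions of that diffusion. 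This is precisely what the paper makes rigorous (Lemma~\ref{weakcon}, Lemma~\ref{elem}, Lemma~\ref{main}, Theorems~\ref{X} and \ref{progeny}); \cite{BS08b} reaches the same tail estimates by an exact generating-function/Tauberian computation exploiting the geometric post-cookie law, but again the object analyzed there is the excursion progeny, not the stationary marginal. To repair your proposal you should replace the stationary-tail claim by the tail of $S^V$ with exponent $\delta/2$, drop the implicit-renewal mechanism, and prove the tail estimate either by the diffusion comparison of this paper or by the generating-function route of \cite{BS08b}.
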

The proof of Theorem~\ref{BS} used the non-negativity of cookies,
though this assumption does not seem to be essential for most parts of
the proof. It is certainly possible that the approach presented in
\cite{BS08b} could yield the same results without the non-negativity
assumption.

The functional central limit theorem for ERWs with $\delta\in[0,1)$ in
stationary ergodic non-negative cookie environments was obtained in
\cite{Do08}. The limiting process is shown to be Brownian motion
perturbed at extrema (see, for example, \cite{Da99}, \cite{CD99}).

The main results of this paper deal with the case when
$\delta\in(2,4]$, though they apply also to
$\delta\in(1,2]$. Moreover, our approach provides an alternative proof
of Theorem~\ref{BS} for general cookie environments that satisfy
conditions (A1) and (A2) (see Remark~\ref{alt}).

We establish the following theorem. 
\begin{theorem}
  \label{mainth}
  Let $T_n=\inf\{j\ge 0\,|\,X_j=n\}$ and $v$ be the speed of the ERW (see 
(\ref{speed})). The following statements hold under the averaged
measure $P_0$.
  \begin{itemize}
  \item [(i)] If $\delta\in(2,4)$ then there is a constant $b>0$ such
    that as $n\to\infty$
    \begin{align}
      &\frac{T_n-v^{-1}n}{n^{2/\delta}}\Rightarrow Z_{\delta/2,b},
\label{itime}\quad\text{and}
\\&\frac{X_n-vn}{n^{2/\delta}}\Rightarrow
-v^{1+2/\delta}Z_{\delta/2,b}.\label{ix}
    \end{align} 
  \item [(ii)] If $\delta=4$ then there is a constant $b>0$ such that
    as $n\to\infty$
    \begin{align}
      &\frac{T_n-v^{-1}n}{\sqrt{n\log n}}\Rightarrow Z_{2,b}
      ,\label{iitime}\quad\text{and}
      \\&\frac{X_n-vn}{\sqrt{n\log n}}\Rightarrow -v^{3/2}Z_{2,b}.\label{iix}
    \end{align}
  \end{itemize}
  Moreover, (\ref{ix}) and (\ref{iix}) hold if $X_n$ is replaced by
  $\sup_{i\le n} X_i$ or $\inf_{i\ge n}X_i$.
\end{theorem}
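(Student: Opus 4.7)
My plan is to follow the now-standard Kesten--Kozlov--Spitzer strategy: encode the hitting time $T_n$ via a backward branching-type process and then read off the stable/Gaussian limits from the tail behavior of that process. Concretely, define for $0\le k<n$ the variable
\[
V_k^{(n)}\ :=\ \#\{j<T_n\mid X_j=n-k,\ X_{j+1}=n-k-1\},
\]
the number of down-steps from site $n-k$ before the walk reaches $n$. Every step of the walk either goes right (there are exactly $n$ such steps before $T_n$) or down, so
\[
T_n\ =\ n\ +\ 2\sum_{k=0}^{n-1}V_k^{(n)}.
\]
Because of the cookie/coin-toss construction in \eqref{B}, the sequence $(V_k^{(n)})_k$ is a Markov chain: given $V_{k-1}^{(n)}=m$, the site $n-k$ is visited $m+1$ times before $T_n$ (once coming from the right after the hitting of $n$--to--be established--$\ldots$), and $V_k^{(n)}$ is essentially the number of failures we see before accumulating $m+1$ (or $m$) successes. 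After the first $M$ visits the cookies are trivial, so we are in the classical geometric regime and $(V_k^{(n)})$ behaves like a Galton--Watson process with migration. This is the same process that drives \cite{KZ08} and \cite{BS08b}.

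The first main step is to obtain sharp tail asymptotics for the stationary process $V_\infty$ associated with the chain. Via the diffusion approximation used in \cite{KZ08} (comparing the renormalized process with a squared Bessel diffusion of dimension $2-\delta$, killed appropriately) one expects
\[
\IP(V_\infty>x)\ \sim\ C\,x^{-\delta/2}\qquad(x\to\infty),
\]
with a logarithmic correction at $\delta=4$: namely $\IE[V_\infty^{\delta/2}\wedge y]$ grows like $\log y$. The key technical work is to make this precise, e.g.\ by computing the tail of an appropriate additive functional $\int_0^\infty Y_t^{\,?}\,dt$ of the limiting squared Bessel diffusion and transferring this to the discrete chain via a coupling / excursion decomposition.

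With the tail bound in hand, the second step is a limit theorem for $\sum_{k=0}^{n-1}V_k^{(n)}$. The chain $(V_k^{(n)})$ is not stationary (it starts at $0$) and is defined on a triangular array, but hits $0$ infinitely often and admits a regenerative structure between zero-visits. Splitting the sum into i.i.d.\ excursions allows us to apply the classical stable CLT: for $\delta/2\in(1,2)$ the excursion sums have mean (because $\delta>2$ gives $v>0$, equivalent to finite mean) and tail index $\delta/2$, yielding \eqref{itime} with the one-sided stable law $Z_{\delta/2,b}$, while for $\delta=4$ the mild divergence of the second moment produces the $\sqrt{n\log n}$ normalization and Gaussian limit \eqref{iitime}. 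The centering $v^{-1}n$ comes out of $\IE[T_n]\sim n/v$, which is (ii) of the first theorem.

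The third step is inversion: $T_{\cdot}$ and $X_{\cdot}$ are inverse processes up to transience, so \eqref{itime}--\eqref{iitime} imply \eqref{ix}--\eqref{iix} by the standard inversion principle for processes with positive speed. A Taylor expansion around $T_{vn}\approx n$ gives the factor $-v^{1+2/\delta}$ (respectively $-v^{3/2}$) in front of $Z_{\delta/2,b}$. The extension to $\sup_{i\le n}X_i$ and $\inf_{i\ge n}X_i$ is a soft consequence of transience to the right plus positive speed: these extrema differ from $X_n$ by $o(n^{2/\delta})$, respectively $o(\sqrt{n\log n})$, which can be shown by a crude moment bound on the backtracking of $X$ from its running maximum (a consequence of $\delta>2$ giving exponential moments for backtracking distances, up to the tail index).

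The main obstacle, in my view, is the sharp tail estimate $\IP(V_\infty>x)\sim Cx^{-\delta/2}$ together with its $\delta=4$ logarithmic refinement. The branching process has migration depending on the cookie environment, so one cannot invoke off-the-shelf results for Galton--Watson processes; the cleanest route is probably a careful diffusion approximation plus a Tauberian argument applied to the Laplace transform of the occupation time of the limiting Bessel-type diffusion, and then to show that the discretization error in this approximation is negligible on the relevant spatial scale. The remaining pieces---regeneration, stable CLT, inversion, and the sup/inf comparison---are then standard.
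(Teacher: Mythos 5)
Your overall scaffolding matches the paper exactly: encode $T_n$ via the backward branching chain $V$ so that $T_n=n+2\sum_k D_{n,k}$ in distribution, decompose $\sum_j V_j$ into i.i.d.\ excursions at the zero-level, apply a stable CLT (normal with $\sqrt{n\log n}$ at $\delta=4$), then invert $T_n\leftrightarrow X_n$ and compare $X_n$ with $\sup_{i\le n}X_i$ and $\inf_{i\ge n}X_i$. However, your Step~1 contains a real conceptual error. You claim the key estimate is $\IP(V_\infty>x)\sim Cx^{-\delta/2}$ for ``the stationary process $V_\infty$ associated with the chain.'' That is not the right object, and the stated tail is in fact wrong for it. The branching chain $V$ hits $0$ and restarts; the relevant tail is that of the \emph{total progeny over one excursion}, $S^V=\sum_{j=0}^{\sigma^V_0-1}V_j$, which is the paper's Theorem~\ref{progeny}: $n^{\delta/2}P_0^V(S^V>n)\to C_2$. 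The stationary marginal of $V$ itself decays faster --- by the renewal identity $\IP(V_\infty>x)\asymp \IE[\sigma_0]^{-1}\,P_0^V(\tau_x<\sigma_0)\,\IE_x[\text{time above }x]\asymp x^{1-\delta}$, and for $\delta>2$ one has $\delta-1>\delta/2$. If you literally set out to prove the $x^{-\delta/2}$ tail for $V_\infty$ you would fail, and in any case that bound would not feed into the stable CLT, which requires the tail of the \emph{additive functional over an excursion}.

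You are also missing, or at least not stating, the second tail estimate that the argument requires: that the excursion length $\sigma^V_0$ has tail $x^{-\delta}$, hence finite second moment for $\delta>2$ (the paper's Theorem~\ref{X}, or at least its upper-bound part). This is what makes $N_n-\lambda n=O_P(\sqrt n)$ negligible on the scale $n^{2/\delta}$ (using $2/\delta>1/2$ when $\delta<4$, and the extra $\sqrt{\log n}$ at $\delta=4$); with only a finite mean the renewal-count fluctuations would not obviously wash out. Two smaller points: your suggestion of a Laplace transform plus a Tauberian theorem is a legitimate route (it is essentially what Basdevant--Singh do for $\delta\in(1,2]$ with nonnegative cookies), but this paper deliberately avoids it and proves both tail estimates directly via a supermartingale comparison (Lemma~\ref{main}) combined with the diffusion approximation; and the backtracking distance below the running maximum has a polynomial tail of index $\delta$ (it is the height of one $V$-excursion), not an exponential moment --- polynomial decay with $\delta>2$ is enough, but the claim as written is false. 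Once Step~1 is recast as a tail estimate for $S^V$ and the finite second moment of $\sigma^V_0$ is added, your plan becomes the paper's proof.
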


The paper is organized as follows.  In Section~\ref{bp} we recall the
branching processes framework and formulate two statements
(Theorem~\ref{X} and Theorem~\ref{progeny}), from which we later infer
Theorem~\ref{mainth}. Section~\ref{soft} explains the idea of the
proof of Theorem~\ref{progeny} and studies properties of the
approximating diffusion process.  In Section~\ref{4} we determine
sufficient conditions for the validity of
Theorem~\ref{progeny}. Section~\ref{5} contains the main technical
lemma (Lemma~\ref{main}). It is followed by three sections, where we
use the results of Section~\ref{5} to verify the sufficient conditions
of Section~\ref{4} and prove Theorem~\ref{X}.  The proof of
Theorem~\ref{mainth} is given in Section~\ref{1from2}.  The Appendix
contains proofs of several technical results.


\section{Reduction to branching processes}\label{bp}

Suppose that the random walk $\{X_n\}_{n\ge 0}$ starts at $0$. Since
$\delta\ge 0$, Lemma~5 of [15] implies that $P_0(T_n<\infty)=1$ for
all $n\in\mathbb{N}$. At first, we recall the framework used in
\cite{BS08a}, \cite{BS08b}, and \cite{KZ08}.  The main ideas go back
at least to \cite{K73} and \cite{KKS75}.

For $n\in\IN$ and $k\le n$ define 
\[D_{n,k}=\sum_{j=0}^{T_n-1}\I_{\{X_j=k,\ X_{j+1}=k-1\}},\]
the number of jumps from $k$ to $k-1$ before time $T_n$.
Then
\begin{equation}\label{Tn}
T_n=n+2\sum_{k\le n}D_{n,k}=n+2\sum_{0\le k\le n}D_{n,k}+2\sum_{k<0}D_{n,k}.
\end{equation} 
The last sum is bounded above by the total time spent by $X_n$ below $0$.
When $\delta>1$, i.e.\ $X_n$ is transient to the right, the
    time spent below $0$ is $P_0$-a.s.\ finite,
and, therefore, for any $\alpha>0$
\begin{equation}\label{irrel}
 \lim_{n\to\infty}\frac{\sum_{k<0}D_{n,k}}{n^\alpha}=0
\quad\text{$P_0$-a.s..}
\end{equation} 
This will allow us to conclude that for transient ERWs fluctuations of
$T_n$ are determined by those of $\sum_{0\le k\le n}D_{n,k}$, once we
have shown that the latter are of order $n^{2/\delta}$.

We now consider the ``reversed'' process
$\left(D_{n,n},D_{n,n-1}\dots,D_{n,0}\right)$. Obviously, $D_{n,n}=0$
for every $n\in\IN$. Moreover, given
$D_{n,n},D_{n,n-1},\dots,D_{n,k+1}$, we can write 
\begin{align*}
 D_{n,k}=&\sum_{j=1}^{D_{n,k+1}+1}(\#\ \text{of jumps from $k$ to 
$k-1$ between the $(j-1)$-th}\\&\text{
   and $j$-th jump from $k$ to $k+1$ before time $T_n$}),\ k=0,1,\dots,n-1.
\end{align*}
Here we used the observation that the number of jumps from $k$ to $k+1$
before time $T_n$ is equal to $D_{n,k+1}+1$ for all $k \le n-1$. The
expression ``between the $0$-th and the $1$-st jump'' above should be
understood as ``prior to the $1$-st jump''.

Fix an $\omega\in\Omega_M$ and denote by $F^{(k)}_m$ the number of
``failures'' in the sequence $B^{(k)}$ (see (\ref{B}) with $z$
replaced by $k$) before the $m$-th ``success''. Then, given
$D_{n,k+1}$,
\[D_{n,k}=F^{(k)}_{D_{n,k+1}+1}.\]

Since the sequences $B^{(k)}$, $k\in\IZ$, are i.i.d.\ under $P_0$,
we have that $F^{(k)}_m\overset{\mathcal{D}}{=}F^{(n-k-1)}_m$ and can
conclude that the distribution of
$\left(D_{n,n},D_{n,n-1}\dots,D_{n,0}\right)$ coincides with
that of $(V_0,V_1,\dots,V_n)$, where $V=\{V_k\}_{k\ge 0}$
is a Markov chain defined by
\[V_0=0,\quad V_{k+1}=F^{(k)}_{V_k+1},\quad k\ge 0.\] 

For $x\ge 0$ we shall denote by $[x]$ the integer part of $x$ and by
$P_x^V$ the measure associated to the process $V$, which starts with
$[x]$ individuals in the $0$-th generation.  Observe that $V$ is a
branching process with the following properties:

(i) $V$ has exactly $1$ immigrant in each generation (the 
immigration occurs before the reproduction) and, therefore, 
does not get absorbed at $0$.

(ii) The number of offspring of the $m$-th individual in
  generation $k$ is given by the number of failures
  between the $(m-1)$-th and $m$-th success in the sequence
  $B^{(k)}$. In particular, if $V_{k}\ge M$ then the offspring
  distribution of each individual after the $M$-th one is
 $\mathrm{Geom(1/2)}$ (i.e.\ geometric on $\{0\}\cup\IN$ with parameter
$1/2$). 

Therefore (here and throughout taking any sum from $k$ to $\ell$ for 
$k>\ell$ to be zero) we can write
\begin{equation}
 \label{defV}
V_{k+1}=\sum_{m=1}^{M\wedge (V_k+1)}
\zeta^{(k)}_m+\sum_{m=1}^{V_k-M+1}\xi^{(k)}_m,\quad k\ge 0,
\end{equation} 
where $\{\xi^{(k)}_m;\,k\ge 0,\,m\ge 1\}$ are i.i.d.\ $\mathrm{Geom(1/2)}$
random variables, vectors $\left(\zeta^{(k)}_1, \zeta^{(k)}_2, \cdots
  \zeta^{(k)}_M\right)$, $k\ge 0$, are i.i.d.\ under $P^V_x$ and
independent of $\{\xi^{(k)}_m;\,k\ge 0,\,m\ge 1\}$.  For each $k\ge 0$
the random variables $\{\zeta^{(k)}_m\}_{m=1}^M$ are neither
independent nor identically distributed, but, given that for some
$j<M$ \[\sum_{m=1}^j\zeta^{(k)}_m\ge M,\] that is all cookies at site $k$ have
been eaten before the $j$-th jump from $k$ to $(k+1)$, we are left with
$\{\zeta^{(k)}_m\}_{m=j+1}^M$ that are independent Geom(1/2) random variables.
Define
\begin{equation}
\label{ext}
\sigma^V_0=\inf\{j>0\,|\, V_j=0\},
\quad S^V=\sum_{j=0}^{\sigma^V_0-1}V_j.     
\end{equation} 

Detailed information about the tails of $\sigma^V_0$ and $S^V$ will
enable us to use the renewal structure and characterize the behavior
of $\sum_{0\le k\le n}D_{n,k}$, and, therefore, of $T_n$ as
$n\to\infty$ for transient ERWs. We shall show in Section~\ref{1from2}
that the following two statements imply Theorem~\ref{mainth}.

\begin{theorem}\label{X}
Let $\delta>0$. Then
\begin{equation}
\label{ub}
\lim _{n \rightarrow \infty} n^\delta P_0^V(\sigma^V_0>n) = C_1\in(0,\infty).
\end{equation}
\end{theorem}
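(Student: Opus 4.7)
The plan is to analyse the branching process $V$ via a diffusion approximation. When $V_k$ is large, equation (\ref{defV}) together with an optional stopping computation (applied to the martingale $S_t - \sum_{i\le t}(2\omega_k(i)-1)$ stopped at the time of the $M$-th success at site $k$) yields $\IE\bigl[\sum_{m=1}^M \zeta_m^{(k)}\bigr] = M - \delta$. Combined with the fact that the $\xi^{(k)}_m$ are i.i.d.\ with mean $1$ and variance $2$, this gives, for $V_k \ge M-1$,
\[
\IE[V_{k+1}-V_k \mid V_k] = 1-\delta, \qquad \mathrm{Var}(V_{k+1} \mid V_k) = 2V_k + O(1).
\]
Under the diffusive scaling $V_{[nt]}/n$ the natural candidate limit is the solution of
\[
dY_t = (1-\delta)\,dt + \sqrt{2Y_t}\,dB_t,
\]
which, after the change of variable $R = 2Y$, is a squared Bessel process of dimension $d = 2 - 2\delta$. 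Since $\delta>1$ this dimension is negative and the process is absorbed at $0$; the explicit Laplace transform of $Y_t$ gives, for every $r>0$,
\[
P_r(\tau_0 > t) \sim \frac{r^\delta}{2^\delta\,\Gamma(\delta+1)}\,t^{-\delta}, \qquad t\to\infty.
\]

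The first step in the proof of Theorem~\ref{X} is to transfer this tail asymptotic to the discrete process: for each fixed $x\ge 1$,
\[
\lim_{n\to\infty} n^\delta P_x^V(\sigma^V_0 > n) = c_\delta\, x^\delta,
\]
with $c_\delta = (2^\delta \Gamma(\delta+1))^{-1}$. Since the event $\{\sigma^V_0 > n\}$ forces $V$ to reach macroscopic size at some intermediate generation, the argument factors through the diffusion regime: conditionally on this event, the rescaled trajectory $V_{[ns]}/n$ does resemble a BESQ excursion of length one, and the discrete and diffusive hitting times are close enough to share the same tail. Theorem~\ref{X} then follows by conditioning on the first-step value $V_1 = \zeta^{(0)}_1$ and using the Markov property,
\[
P^V_0(\sigma^V_0 > n) = \sum_{j\ge 1} P(V_1=j)\, P_j^V(\sigma^V_0 > n-1) \sim c_\delta\, \IE[V_1^\delta]\, n^{-\delta},
\]
which is legitimate because $V_1$ has geometric-type tails so $\IE[V_1^\delta]<\infty$; this gives the positive finite constant $C_1 = c_\delta\,\IE[V_1^\delta]$.

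The main obstacle is the quantitative form of the diffusion approximation. Mere weak convergence of $V_{[nt]}/n$ is not enough: on the one hand, the functional $\sigma^V_0/n$ is discontinuous near the boundary $\{Y=0\}$ where the coefficient $\sqrt{2Y}$ degenerates; on the other hand, to justify interchanging the limit with the sum over $j$ in the last display, one needs the asymptotic $n^\delta P_j^V(\sigma^V_0>n)\to c_\delta j^\delta$ to come with a uniform-in-$j$ bound of the form $P_j^V(\sigma^V_0 > n) \le C j^\delta n^{-\delta}$. I expect Lemma~\ref{main} of Section~\ref{5} to supply this uniform control, plausibly through a polynomial Lyapunov estimate with exponent $\delta$, and the sufficient conditions of Section~\ref{4} to identify the abstract martingale/variance inequalities that such a Lyapunov function must satisfy. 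A secondary, milder difficulty is that the identity $\IE[V_{k+1}-V_k\mid V_k]=1-\delta$ fails when $V_k<M-1$; this low-population regime is absorbed into error terms as soon as one localises above a slowly diverging threshold.
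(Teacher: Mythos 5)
Your diffusion heuristic is correct and matches the paper's: after the change of variable $R=2Y$ one does get a squared Bessel process of dimension $2(1-\delta)<0$, and for the diffusion $Y$ the tail $P^Y_r(\tau_0>t)\sim C_3\,r^\delta t^{-\delta}$ follows from Lemma~\ref{difext} and scaling. You are also right that uniform control of the type Lemma~\ref{main} provides is needed, and that it plays the role of a polynomial Lyapunov estimate with exponent $\delta$.

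However, there is a genuine gap that makes the proposed route fail: the claim that for each \emph{fixed} $x$
\[
\lim_{n\to\infty} n^\delta P_x^V(\sigma^V_0>n) = c_\delta\,x^\delta,\qquad c_\delta=\bigl(2^\delta\,\Gamma(\delta+1)\bigr)^{-1},
\]
is false, and with it the final identification $C_1=c_\delta\,\IE[V_1^\delta]$. The diffusion approximation of Lemma~\ref{weakcon} only controls $V$ in the bulk, i.e.\ when it is of order $n$; it says nothing about the low-population regime, and the low-population regime is \emph{not} a small error. To survive to time $n$, the process started from a bounded $x$ must first climb to a macroscopic level $\epsilon n$, and the probability of that event,
\[
P_x^V(\tau_{\epsilon n}<\sigma_0)\ \approx\ f(x)\,(\epsilon n)^{-\delta},
\]
carries a constant $f(x)$ governed by the discrete cookie mechanism near $0$ (geometric offspring modified by up to $M$ cookies), not by the diffusion. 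Consequently the true limit in Theorem~\ref{X} factors as $C_1=C_3\,C_5$ where $C_5=f(0)$ is precisely the content of condition~(C)/Lemma~\ref{triv}, a limit whose existence is nontrivial (it is obtained from Lemma~\ref{main} via the telescoping product in Proposition~\ref{s}) and whose value is \emph{not} $x^\delta$ times a universal constant. Your last paragraph flags the low-population regime as ``a secondary, milder difficulty'' to be ``absorbed into error terms''; in fact it is the main issue, it contributes a nonuniversal multiplicative constant to the leading order, and it cannot be localized away. The first-step decomposition over $V_1$ is therefore built on a false premise, and even if it were repaired one would need to justify exchanging limit and sum by a uniform bound $P_j^V(\sigma_0>n)\le C\,g(j)\,n^{-\delta}$ with $g$ growing polynomially in $j$ — which again requires (and is essentially equivalent to) the machinery of Lemma~\ref{main} and the scale-decomposition Proposition~\ref{NEW}/Proposition~\ref{reentry} that the paper develops, rather than following from weak convergence.
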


\begin{theorem}
  \label{progeny}
Let $\delta>0$. Then 
\begin{equation}
  \label{ptails}
  \lim_{n\to\infty}n^{\delta/2}P_0^V\left(S^V>n\right)=C_2 \in (0, \infty).
\end{equation}
\end{theorem}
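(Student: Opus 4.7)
The plan is to prove Theorem~\ref{progeny} via a diffusion approximation for the branching process with immigration $V$ of (\ref{defV}), leveraging Theorem~\ref{X} to transfer the $n^{-\delta}$ tail of $\sigma_0^V$ into the $n^{-\delta/2}$ tail of $S^V$. The heuristic is the following: $\mathrm{Geom}(1/2)$ has mean $1$ and variance $2$, and the cookies at a single site produce a total expected drift deficit of $\delta$ per generation, so for $V_k$ large one computes $\IE[V_{k+1}\mid V_k=v]=v+1-\delta$ and $\mathrm{Var}[V_{k+1}\mid V_k=v]\sim 2v$. Under $P_{\lfloor xN\rfloor}^V$, the rescaled process $V_{[Nt]}/N$ should therefore converge weakly to the diffusion $Y$ solving
\begin{equation*}
dY_t=(1-\delta)\,dt+\sqrt{2Y_t}\,dB_t,\qquad Y_0=x,
\end{equation*}
essentially $1/2$ times a squared Bessel process of (negative) dimension $2-2\delta$. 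This diffusion is self-similar in the sense that $\tau_0^{(x)}\stackrel{\mathcal D}{=}x\,\tau_0^{(1)}$ and $\int_0^{\tau_0^{(x)}}Y_s\,ds\stackrel{\mathcal D}{=}x^2\int_0^{\tau_0^{(1)}}Y_s\,ds$. Consequently, during a long excursion of $V$ the sum $S^V$ is of order $(\sigma_0^V)^2$, and the $n^{-\delta/2}$ tail for $S^V$ follows from the $n^{-\delta}$ tail for $\sigma_0^V$ via the change of variables $n\sim t^2$.

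Concretely, I would proceed in three steps. First, establish the diffusion approximation $V_{[Nt]}/N\Rightarrow Y_t$ under $P_{\lfloor xN\rfloor}^V$ together with the joint convergence $(\sigma_0^V/N,\,S^V/N^2)\Rightarrow(\tau_0^{(x)},\,\int_0^{\tau_0^{(x)}}Y_s\,ds)$, via standard martingale and tightness techniques adapted to branching processes perturbed near the boundary. Second, pass from $V_0=0$ to a macroscopic starting value: using the strong Markov property at the first hitting time of level $y$ by $V$, one shows that this level is reached (before return to $0$) with probability of order $y^{-\delta}$, while on that event $S^V/y^2$ converges weakly to $J:=\int_0^{\tau_0^{(1)}}Y_s^{(1)}\,ds$. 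Third, integrate out $y$ (equivalently, condition on the value of the excursion maximum and use the $y^{-\delta}$ tail) to obtain
\begin{equation*}
n^{\delta/2}P_0^V(S^V>n)\ \longrightarrow\ C_2\ =\ c\int_0^\infty P(J>s)\,s^{(\delta-2)/2}\,ds\ =\ \frac{2c}{\delta}\,\IE\bigl[J^{\delta/2}\bigr],
\end{equation*}
for an explicit positive constant $c$ arising from the tail of the excursion maximum.

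I expect the main obstacle to be the control of the boundary region $V\simeq 0$, which is precisely the reason for the technical work in Sections~\ref{soft}--\ref{4} and Lemma~\ref{main}. The process $V$ spends a macroscopic proportion of every excursion at $O(1)$ values, where the Gaussian approximation fails and the non-independent, cookie-dependent joint law of the vectors $(\zeta_m^{(k)})_{m=1}^M$ is felt. One must show that these near-boundary contributions to both $\sigma_0^V$ and $S^V$ are negligible on the relevant scale, and establish the moment bound $\sup_{y\ge 1}\IE_y^V[(S^V/y^2)^{\delta/2}]<\infty$, without which the limit $C_2$ would be infinite. Positivity of $C_2$, by contrast, is immediate from the positivity of $J$ combined with the existence of excursions reaching arbitrary heights with positive probability.
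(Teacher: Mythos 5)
Your overall strategy mirrors the paper's: a diffusion approximation of $V$ by the squared-Bessel-type process $Y$ (the paper's Lemma~\ref{weakcon}), a decomposition of the excursion via the first hitting of a macroscopic level (the paper's conditions (B) and (C) in Lemma~\ref{4.1}), and scaling to convert the $n^{-\delta}$ tail of heights into the $n^{-\delta/2}$ tail of areas. The identification of the boundary region $V = O(1)$ as the main technical obstacle, and the role of the renewal structure, are also correctly diagnosed. So the route is genuinely the same one the paper takes, not a different decomposition.

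However, your final step contains a genuine error. You propose
\[
C_2 \ =\ c\int_0^\infty P(J>s)\,s^{(\delta-2)/2}\,ds \ =\ \frac{2c}{\delta}\,\IE\bigl[J^{\delta/2}\bigr],
\qquad J := \int_0^{\tau_0^{(1)}}Y_s^{(1)}\,ds,
\]
but this quantity is \emph{infinite}. Indeed, by the paper's Lemma~\ref{C} (which you implicitly rely on for the scaling step), $P_1^Y(J>s)\sim C_4\, s^{-\delta/2}$ as $s\to\infty$, so $P(J>s)\,s^{(\delta-2)/2}\sim C_4\,s^{-1}$ and the integral diverges logarithmically; equivalently, $\IE[J^{p}]<\infty$ exactly for $p<\delta/2$. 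The source of the overcount is the conflation of ``$V$ reaches level $y$'' with ``the excursion maximum is $\approx y$.'' After hitting $y$, the diffusion $Y^{(1)}$ can overshoot to levels $\gg 1$, and those high-overshoot paths are precisely what produce the heavy $s^{-\delta/2}$ tail of $J$; when you then integrate against the $y^{-\delta-1}$ density of the maximum, the same high excursions are counted once for every intermediate level they cross. (Your proposed safeguard $\sup_{y\ge 1}\IE_y^V[(S^V/y^2)^{\delta/2}]<\infty$ is likewise false, by the same tail computation applied to the discrete process via condition (B).) The paper resolves exactly this by \emph{not} integrating over $y$: it fixes $y=\epsilon n$, sends $n\to\infty$ first, then $\epsilon\to 0$, and the constant arises as the product $C_2=C_4C_5$, where $C_4=\lim_{\epsilon\to 0}\epsilon^{-\delta}P_1^Y(J>\epsilon^{-2})$ is itself defined as a limit (Lemma~\ref{C}) rather than as a convergent moment, and $C_5=\lim_n n^\delta P_0^V(\tau_n<\sigma_0)$ is condition (C). To repair your argument one would either adopt this double-limit structure, or replace $J$ by the area under the diffusion \emph{conditioned on its maximum}, which does have a finite $\delta/2$-moment; but as written the formula for $C_2$ does not produce a finite constant.
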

\begin{remark}{\em In fact, a weaker result than (\ref{ub}) is
    sufficient for our purpose: there is a constant $B$ such that $n
    ^\delta P_0^V (\sigma^V_0>n)\leq B $ for all $n\in\IN$ (see
    condition (A) in Lemma~\ref{4.1}).  We also would like to point
    out that the limits in (\ref{ub}) and (\ref{ptails}) exist for
    every starting point $x\in\IN\cup \{0\}$ with $C_1$ and $C_2$
    depending on $x$. The proofs simply repeat those for $x=0$.

For the model described in Theorem~\ref{BS}, the convergence
(\ref{ub}) starting from $x\in\IN$ is shown in \cite[Proposition
3.1]{BS08b} (for $\delta\in(1,2)$), and (\ref{ptails}) for
$\delta\in(1,2]$ is the content of \cite[Proposition
4.1]{BS08b}. Theorem~\ref{X} can also be derived from the construction
in \cite{KZ08} (see Lemma 17) and \cite{FYK90}.  We
use a different approach and obtain both results directly without
using the Laplace transform and Tauberian theorems.  }
\end{remark}

We close this section by introducing some additional
notation.  For $x\ge 0$ we set
\begin{align}
 \tau^V_x&=\inf\{j> 0\,|\, V_j\ge x\};\label{tau}\\
\sigma^V_x&=\inf\{j> 0\,|\,V_j\le x\}.\label{sig}
\end{align}
We shall drop the superscript whenever there is no possibility of
confusion.

When the random walk $X$ is transient to the right,
$P^V_y(\sigma^V_0<\infty)=1$ for every $y\ge 0$. This implies that
$P^V_y(\sigma^V_x<\infty)=1$ for every $x\in[0,y)$.

Let us remark that when we later deal with a continuous process on
$[0,\infty)$ we shall simply use the first hitting time of $x$ to
record the entrance time in $[x,\infty)$ (or $[0,x]$), given that the
process starts outside of the mentioned interval. We hope that
denoting the hitting time of $x$ for such processes also by $\tau_x$
will not result in ambiguity.


\section{The approximating diffusion process and its
  properties}\label{soft}

The bottom-line of our approach is that the main features of 
branching process $V$ killed upon reaching $0$ are reasonably well described by
a simple diffusion process. 

The parameters of such diffusion processes can be easily computed at the
heuristic level. For $V_k\ge M$, (\ref{defV}) implies that 
\begin{equation}\label{diffV}
V_{k+1}-V_k=\sum_{m=1}^M \zeta^{(k)}_m-M+1+\sum_{m=1}^{V_k-M+1}(\xi_m^{(k)}-1).
\end{equation}
By conditioning on the number of successes in the first $M$ tosses it is easy to
compute (see Lemma 3.3 in \cite{BS08a} or Lemma 17 in \cite{KZ08} for details)
that for all $x\ge 0$ 
\begin{equation}\label{drift}
E^V_x\left(\sum_{m=1}^M \zeta^{(k)}_m-M+1\right)=1-\delta.
\end{equation}
The term $\sum_{m=1}^M \zeta^{(k)}_m-M+1$ is independent of 
$\sum_{m=1}^{V_k-M+1}(\xi_m^{(k)}-1)$. When $V_k$ is large, the latter is
approximately normal with mean $0$ and variance essentially equal to $2V_k$. 

Therefore, the relevant diffusion should be given by the following
stochastic differential equation:
\begin{equation}\label{sde}
dY_t=(1-\delta)\,dt+\sqrt{2 Y_t}\,dB_t,\quad Y_0=y>0, \quad t\in[0,\tau_0^Y],
\end{equation}
where for $x\ge 0$ we set
\begin{equation}
  \label{hit}
  \tau_x^Y=\inf\{t\ge 0\,|\,Y_t=x\}.
\end{equation}
Throughout the rest of the paper, unless stated otherwise, we shall
assume that $\delta>0$. Observe that $\tau_0^Y<\infty$ a.s., since
$2Y_t$ is a squared Bessel process of of dimension  $2(1-\delta)<2$
(for a proof, set $a=0$ and let $b\to\infty$ in part (ii) of
Lemma~\ref{elem} below).

The above heuristics are justified by the next lemma. 
\begin{lemma} \label{weakcon}
Let $Y=\{Y_t\}_{t\ge 0}$ be the solution of (\ref{sde}).
Fix an arbitrary $\epsilon>0$. For $y\in(\epsilon,\infty )$ let $V_0=[ny]$, 
and define 
\begin{equation*}
Y^{\epsilon,n}_t = \frac{V_{[nt]\wedge \sigma^V_{\epsilon n}}}{n},\quad
t\in[0,\infty),
\end{equation*}
where $\sigma^V_x$ is given by (\ref{sig}).
Then the sequence of processes $Y^{\epsilon,n}=\{Y^{\epsilon,n}_t\}_{t\ge 0}$
converges in distribution as $n\to\infty$ with respect to the Skorokhod topology
on the space of c\`adl\`ag functions to the stopped diffusion
$Y^\epsilon=\{Y_{t\wedge\tau^Y_\epsilon}\}_{t\ge 0}$, $Y_0=y$.
\end{lemma}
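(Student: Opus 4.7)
The strategy is a standard diffusion approximation via the martingale problem: isolate from the recursion \eqref{defV} a constant drift $1-\delta$ and a conditional variance proportional to $V_k$, stop at level $\epsilon n$ so the limit stays away from the singular boundary $0$, and identify the limit uniquely by pathwise uniqueness of \eqref{sde} on $[\epsilon,\infty)$. For $n$ so large that $\epsilon n > M$, \eqref{diffV} and \eqref{drift} give, for each $k<\sigma^V_{\epsilon n}$, the decomposition $V_{k+1}-V_k = (1-\delta) + D^{(k)} + N^{(k)}$, where $D^{(k)} := \sum_{m=1}^M\zeta^{(k)}_m - M + \delta$ is i.i.d.\ mean zero, and, conditionally on $V_k$, the term $N^{(k)} := \sum_{m=1}^{V_k-M+1}(\xi^{(k)}_m - 1)$ is an independent centered sum of $V_k - M + 1$ i.i.d.\ $\mathrm{Geom}(1/2) - 1$ variables, each of variance $2$.

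After dividing by $n$ and evaluating at time $[nt]\wedge \sigma^V_{\epsilon n}$, the drift contributes $(1-\delta)(t\wedge \sigma^V_{\epsilon n}/n)$ in the limit; the i.i.d.\ term $n^{-1}\sum_{j<[nt]\wedge\sigma^V_{\epsilon n}}D^{(j)}$ vanishes uniformly on compact time intervals by Doob's inequality; and the martingale $M_n(t) := n^{-1}\sum_{j<[nt]\wedge \sigma^V_{\epsilon n}}N^{(j)}$ has predictable quadratic variation $\langle M_n\rangle_t = \tfrac{2}{n^2}\sum_{j<[nt]\wedge \sigma^V_{\epsilon n}}(V_j - M + 1)$, converging to $2\int_0^{t\wedge \sigma^V_{\epsilon n}/n} Y^{\epsilon,n}_s\,ds$. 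Tightness in the Skorokhod topology follows from Aldous' criterion applied with these estimates, together with the observation that once $V_k\ge M$ the process $V$ is a supermartingale modulo a bounded-mean correction, which yields tightness of $\sup_{k\le Tn} V_k/n$ for every $T>0$. By Rebolledo's functional martingale central limit theorem, any subsequential weak limit $Y^\epsilon$ is continuous and satisfies $Y^\epsilon_t = y + (1-\delta)(t\wedge \tau^Y_\epsilon) + \tilde M_t$ with $\tilde M$ a continuous martingale of quadratic variation $2\int_0^{t\wedge \tau^Y_\epsilon}Y^\epsilon_s\,ds$; a Dubins--Schwarz time change writes $\tilde M_t = \int_0^{t\wedge\tau^Y_\epsilon}\sqrt{2Y^\epsilon_s}\,dB_s$ on a standard extension of the probability space, so $Y^\epsilon$ solves \eqref{sde} stopped at $\tau^Y_\epsilon$. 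Since $\sqrt{2y}$ is Lipschitz on $[\epsilon,\infty)$, Yamada--Watanabe pathwise uniqueness pins down the limit, forcing the whole sequence to converge; the exit-time convergence $\sigma^V_{\epsilon n}/n \Rightarrow \tau^Y_\epsilon$ follows by the continuous mapping theorem, since $\epsilon$ is a regular point for the limiting diffusion.

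The main obstacle is quantifying enough integrability of the boundary term $D^{(k)}$: cookies lie only in $[0,1]$ and (A2) does not rule out environments making $\sum_m\zeta^{(k)}_m$ heavy-tailed, so one cannot directly quote the usual bounded-increment criteria. The way around this is that $D^{(k)}$ enters at scale $O(1)$ per step while the geometric bulk $N^{(k)}$ fluctuates at scale $\sqrt{V_k}=O(\sqrt n)$; after dividing by $n$, $D^{(k)}$ is first-order negligible, and one handles the required moment conditions rigorously via a truncation of $D^{(k)}$ at level $n^{1/4}$ combined with a Markov-type tail estimate derived from (A2).
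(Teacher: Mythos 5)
Your proof takes a genuinely different route from the paper's. The paper modifies the branching rule below level $\epsilon n$ (replacing $V_k$ by $V_k\vee(\epsilon n)$ in the offspring sum), which makes the diffusion coefficient of the target SDE bounded away from zero, and then cites Ethier--Kurtz Chapter~7, Theorem~4.1 together with the uniqueness results of Chapter~5, Section~3 as a black box. You instead prove the convergence by hand: decompose the increment into drift plus a small i.i.d.\ term plus a conditionally centered geometric-sum martingale, verify Aldous tightness and compute the predictable bracket, invoke a functional martingale CLT (Rebolledo) to identify any subsequential limit as a solution of the stopped martingale problem, and close with martingale representation and Yamada--Watanabe on $[\epsilon,\infty)$. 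Both strategies are correct; yours is more self-contained, the paper's is shorter. One remark on the Yamada--Watanabe step: to make ``pathwise uniqueness of the \emph{stopped} SDE'' precise you should extend the coefficient past the boundary (e.g.\ replace $\sqrt{2y}$ by $\sqrt{2(y\vee\epsilon)}$, exactly as the paper does) and apply uniqueness to the extended equation; the Lipschitz argument you give is what makes that extended equation pathwise unique.

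The one genuine error is in your closing paragraph, where you name the ``main obstacle'' as the possibility that $D^{(k)}=\sum_{m=1}^M\zeta^{(k)}_m-M+\delta$ could be heavy-tailed because (A2) places no restriction beyond non-degeneracy. This is not so: $\sum_{m=1}^M\zeta^{(k)}_m$ is the number of failures before the $M$-th success in the sequence $B^{(k)}$, and since all tosses after the first $M$ are fair regardless of $\omega$, it is stochastically dominated --- uniformly over all $\omega\in\Omega_M$ --- by $M+\sum_{i=1}^M G_i$ with $G_i$ i.i.d.\ $\mathrm{Geom}(1/2)$. Hence $D^{(k)}$ has uniformly bounded exponential moments; there is no heavy-tail issue to truncate away, and a ``Markov-type tail estimate derived from (A2)'' would not be the right tool in any case. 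This misdiagnosis does not invalidate the argument (the truncation is merely unnecessary), but the obstacle you single out as central is not there.
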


\begin{proof}
  We simply apply the (much more general) results of \cite{EK86}.  We
  first note that our convergence result considers the processes up to
  the first entry into $(- \infty ,\epsilon ]$ for $\epsilon > 0$
  fixed.  So we can choose to modify the rules of evolution for $V$
  when $V_k \leq \epsilon n $: we consider the process $(V^{n,\epsilon}_k)_{k
    \geq 0}$ where, with the existing notation,
\begin{equation}
  V^{n,\epsilon}_0=[ny],\quad V^{n,\epsilon}_{k+1}=\sum_{m=1}^{M} 
  \zeta^{(k)}_m+\sum_{m=1}^{V^{n,\epsilon}_k \vee
    (\epsilon n)-M+1}\xi^{(k)}_m,\quad k\ge 0.
\end{equation} 
Then (given the regularity of points for the limit process) it will suffice to
show the convergence of processes
\begin{equation*}
\tilde{Y}^{\epsilon,n}_t = \frac{V^{n,\epsilon}_{[nt]}}{n},\quad t\in[0,\infty),
\end{equation*}
to the solution of the stochastic integral equation
\begin{equation}
dY_t=(1-\delta)\,dt+\sqrt{2 (Y_t \vee \epsilon)}\,dB_t,\quad Y_0=y>0, \quad
t\in[0, \infty ).
\end{equation}
We can now apply Theorem 4.1 of Chapter 7 of \cite{EK86} with $X_n(t) =
\tilde{Y}^{\epsilon,n}_t $.  The needed
uniqueness of the martingale problem corresponding to operator
\begin{equation}
Gf \ = \ (x \vee \epsilon ) f^{\prime \prime} + (1- \delta ) f^\prime
\end{equation}
follows from \cite{EK86}, Chapter 5, Section 3 (Theorems 3.6 and 3.7 imply the
distributional uniqueness for solutions of the corresponding stochastic integral
equation, and Proposition 3.1 shows that this implies the uniqueness for
the martingale problem).
\end{proof}

We shall see in a moment that this diffusion has the desired behavior
of the extinction time and of the total area under the path before the
extinction (see Lemma~\ref{difext} and Lemma~\ref{C} below).
Unfortunately, these properties in conjunction with
Lemma~\ref{weakcon} do not automatically imply Theorem~\ref{X} and
Theorem~\ref{progeny}, and work needs to be done to ``transfer'' these
results to the corresponding quantities of the process $V$.
Nevertheless, Lemma~\ref{weakcon} is very helpful when $V$ stays large
as we shall see later.

\medskip

In the rest of this section we state and prove several facts about
$Y$. When we need to specify that the process $Y$ starts at $y$ at
time $0$ we shall write $Y^y$.  Again, whenever there is no ambiguity
about which process is being considered we shall drop the superscript
in $\tau^Y_x$ defined in (\ref{hit}).

\begin{lemma}
\label{elem}
Fix $y>0$.
\begin{itemize}
\item [(i)] (Scaling) Let $\widetilde{Y}=\{\widetilde{Y}_t\}_{t\ge 0}$, where
$\widetilde{Y}_t=\dfrac{Y^y_{ty}}{y}$.
Then $\widetilde{Y}\overset{\mathcal{D}}{=}Y^1$.
\item [(ii)] (Hitting probabilities) Let $0\le a<y<b$. Then
  \[P_y^{Y}(\tau_a<\tau_b)=\frac{b^\delta-y^\delta}{b^\delta-a^\delta}.\]
\end{itemize}
\end{lemma}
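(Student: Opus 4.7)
The plan is to treat the two parts separately, using Itô calculus plus uniqueness in law for (i), and the standard scale-function computation for a one-dimensional regular diffusion for (ii).

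For (i), I would set $\widetilde Y_t := Y^y_{ty}/y$ and $\widetilde B_t := B_{ty}/\sqrt y$. By Brownian scaling, $\widetilde B$ is again a standard Brownian motion. Substituting into (\ref{sde}) and using $dB_{ty} = \sqrt y\,d\widetilde B_t$ gives
\[ d\widetilde Y_t = (1-\delta)\,dt + \sqrt{2\widetilde Y_t}\,d\widetilde B_t, \qquad \widetilde Y_0 = 1, \]
which is exactly the SDE defining $Y^1$. Distributional uniqueness for (\ref{sde}) --- already invoked in the proof of Lemma~\ref{weakcon} --- then gives $\widetilde Y \overset{\mathcal{D}}{=} Y^1$.

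For (ii), I would apply the standard hitting-probability formula $P_y^Y(\tau_a<\tau_b) = (s(b)-s(y))/(s(b)-s(a))$ for a regular one-dimensional diffusion, where $s$ is any scale function of $Y$. The generator of (\ref{sde}) is $Lf(x) = xf''(x) + (1-\delta)f'(x)$, so a scale function must satisfy $x s''(x) + (1-\delta) s'(x) = 0$ on $(0,\infty)$. Setting $u=s'$ reduces this to $u'/u = (\delta-1)/x$, hence $u(x) = C x^{\delta-1}$ and $s(x) = x^\delta$ up to an affine change. Plugging in yields the claimed formula $(b^\delta - y^\delta)/(b^\delta - a^\delta)$.

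I do not anticipate a real obstacle. The only mildly delicate point is the boundary case $a=0$, where the hitting formula is usually stated for $a$ strictly inside the state space. This is fine here because $0$ is reached in finite time a.s.\ (the process $2Y$ is a squared Bessel process of negative dimension $2(1-\delta)$, as remarked after (\ref{sde})) and $s(0)=0$, so the formula extends by continuity to $a=0$, giving in particular $P_y^Y(\tau_0<\tau_b) = 1-(y/b)^\delta$.
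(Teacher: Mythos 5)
Your proof is correct and takes essentially the same route as the paper: Itô calculus with Brownian scaling for (i), and for (ii) the scale function $s(x)=x^\delta$, which is exactly the observation the paper makes that $(Y^y_t)^\delta$ stopped on exiting $[a,b]$ is a martingale. Your extra remark justifying the endpoint $a=0$ via accessibility of the origin and $s(0)=0$ is a welcome small addition that the paper leaves implicit.
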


\begin{proof}
Part (i) can be easily checked by It\^o's formula applied to $\widetilde{Y}_t$
or seen from scaling properties of the generator. The proof of part (ii) is
standard once we notice that the process $(Y^y_t)^{\delta}$ stopped upon
reaching the boundary of $[a,b]$ is a martingale. We omit the details.
\end{proof}

\begin{lemma}\label{difext}
Let $Y$ be the diffusion process defined by (\ref{sde}). Then
\[\lim_{x\to\infty}x^\delta P_1^Y(\tau_0>x)=C_3\in (0,\infty).\]
\end{lemma}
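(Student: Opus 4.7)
The plan is to exploit the self-similar structure of $Y$ together with the Kolmogorov backward equation to reduce the problem to an explicitly solvable ODE. Set $F(s):=P_1^Y(\tau_0>s)$ for $s\ge 0$. By the scaling identity in Lemma~\ref{elem}(i), $P_y^Y(\tau_0>t)=F(t/y)$ for all $y,t>0$, so the whole two-parameter family of tail probabilities is encoded in the single function $F$. Standard one-dimensional diffusion theory gives that $u(y,t):=P_y^Y(\tau_0>t)$ is smooth on $(0,\infty)\times(0,\infty)$ and satisfies the backward Kolmogorov equation
\[
\partial_t u\ =\ y\,\partial_y^2 u+(1-\delta)\,\partial_y u,
\]
since the coefficients of $Y$ are smooth and the diffusion is non-degenerate on $(0,\infty)$ (with $0$ absorbing).

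Substituting $u(y,t)=F(t/y)$ and setting $s=t/y$, a short computation turns the PDE into the second-order linear ODE
\[
s^2 F''(s)+[(1+\delta)s-1]F'(s)\ =\ 0,\qquad s>0,
\]
which is first order in $F'$. Separation of variables gives $F'(s)=C\,s^{-(1+\delta)}e^{-1/s}$ for a constant $C$. The two free constants are pinned down by $F(0^+)=1$ (the process has continuous paths with $Y_0=1>0$, so $\tau_0>0$ a.s.) and $F(\infty)=0$ (noted right after \eqref{hit}: $\tau_0^Y<\infty$ a.s.). The substitution $v=1/u$ in
\[
\int_0^{\infty} u^{-(1+\delta)}e^{-1/u}\,du\ =\ \int_0^{\infty} v^{\delta-1}e^{-v}\,dv\ =\ \Gamma(\delta)
\]
yields $C=-1/\Gamma(\delta)$, and integrating from $s$ to infinity gives
\[
F(s)\ =\ \frac{1}{\Gamma(\delta)}\int_s^{\infty} u^{-(1+\delta)}e^{-1/u}\,du.
\]

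To finish, I would note that $e^{-1/u}\to 1$ uniformly on $[s,\infty)$ as $s\to\infty$, so a trivial comparison gives $\int_s^{\infty} u^{-(1+\delta)}e^{-1/u}\,du=s^{-\delta}/\delta+O(s^{-1-\delta})$, hence
\[
\lim_{s\to\infty}s^\delta F(s)\ =\ \frac{1}{\delta\,\Gamma(\delta)}\ =\ \frac{1}{\Gamma(\delta+1)}\ \in\ (0,\infty),
\]
so $C_3=1/\Gamma(\delta+1)$.

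The only step not reducing to a direct computation is the smoothness of $u$ and the validity of the backward equation on $(0,\infty)^2$, which is classical parabolic regularity in view of the smoothness and non-degeneracy of the coefficients away from the absorbing point $0$. A purely probabilistic alternative would be to invoke the known explicit law of $\tau_0$ for the squared Bessel process $2Y_t$ of negative dimension $2(1-\delta)$ (from which $\tau_0^{Y^1}$ has the law of $1/\Gamma_\delta$, $\Gamma_\delta\sim\mathrm{Gamma}(\delta,1)$, and the claimed tail is immediate), but the ODE derivation above is self-contained.
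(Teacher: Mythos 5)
Your proof is correct, and it takes a genuinely different --- and in one respect sharper --- route than the paper. The paper's argument is soft: it uses the scaling relation of Lemma~\ref{elem}(i) together with a strong Markov / monotonicity estimate to show that $\liminf_{x\to\infty}x^\delta P_1^Y(\tau_0>x)$ dominates $\limsup_{x\to\infty}x^\delta P_1^Y(\tau_0>x)$, and then invokes the separate boundedness statement of Lemma~\ref{difup} (whose proof is itself deferred to the discrete analogue in Section~6) to conclude that the common value is a finite positive number; no explicit formula for $C_3$ results. Your argument instead encodes the entire two-parameter family $P_y^Y(\tau_0>t)$ in the single function $F$ via scaling, feeds that ansatz into the backward Kolmogorov equation (valid by standard parabolic regularity, since the coefficients are smooth and the operator is uniformly elliptic on compact subsets of $(0,\infty)$), and reduces the lemma to a first-order linear ODE for $F'$, producing the closed form $F(s)=\Gamma(\delta)^{-1}\int_0^{1/s}v^{\delta-1}e^{-v}\,dv$. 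This identifies the law of $\tau_0$ under $P_1^Y$ as that of $1/\Gamma_\delta$ with $\Gamma_\delta\sim\mathrm{Gamma}(\delta,1)$ --- matching the classical hitting-time law for squared Bessel processes of dimension $2(1-\delta)<2$ --- and gives the explicit value $C_3=1/\Gamma(\delta+1)$. The computations (the ODE $s^2F''+[(1+\delta)s-1]F'=0$, the normalization via the substitution $v=1/u$, and the asymptotic $\int_s^\infty u^{-(1+\delta)}e^{-1/u}\,du\sim s^{-\delta}/\delta$) all check out. In short, your route is self-contained in that it avoids Lemma~\ref{difup} entirely and yields the constant explicitly, at the price of invoking PDE regularity; the paper's route stays purely probabilistic and reuses discrete machinery it needs elsewhere anyway, but leaves $C_3$ unidentified.
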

\begin{proof}
For every $\epsilon>0$ and for all $x>1/\epsilon$ we have by
Lemma~\ref{elem}
\begin{align*}
  x^\delta P^Y_1(\tau_0>x)&\ge x^\delta P^Y_1(\tau_0>x\,|\,\tau_{\epsilon
    x}<\tau_0) P^Y_1(\tau_{\epsilon x}<\tau_0) \\& \ge x^\delta
  P^Y_{\epsilon x}(\tau_0>x)\left(\epsilon x\right)^{-\delta}=
  \epsilon^{-\delta} P^Y_1(\tau_0>\epsilon^{-1})>0.
\end{align*}
This implies that for each $\epsilon>0$
\[\liminf_{x\to\infty}x^\delta P^Y_1(\tau_0>x)\ge \epsilon^{-\delta}
P^Y_1(\tau_0>\epsilon^{-1})>0.
\]
Taking the $\limsup_{\epsilon\to 0}$ in the right-hand side we get
\[\liminf_{x\to\infty}x^\delta P^Y_1(\tau_0>x)\ge \limsup_{\epsilon\to
  0} \epsilon^{-\delta} P^Y_1(\tau_0>\epsilon^{-1})=\limsup_{x\to
  \infty}x^\delta P^Y_1(\tau_0>x).\] This would immediately imply the
existence of a finite non-zero limit if we could show that
 \[\limsup_{x\to \infty}x^\delta
 P^Y_1(\tau_0>x)<\infty.\]
This is the content of the next lemma. 

\begin{lemma}
\label{difup}
Let $Y$ be the diffusion process defined by
      (3.3). Then \[\limsup_{x\to \infty}x^\delta
      P^Y_1(\tau_0>x)<\infty.\]
\end{lemma}
The proof is very similar to the proof of the discrete version (see
(A) in Lemma~\ref{4.1} and its proof in Section~\ref{px}) and, thus,
is omitted.
\end{proof}

The final result of this section can be viewed as the ``continuous 
counterpart'' of Theorem~\ref{progeny}. 
It concerns the area under the path of $Y$.
\begin{lemma} \label{C} Let $Y$ be the diffusion process defined by (\ref{sde}).
Then
\[\lim_{y\to\infty}y^\delta P_1^Y\left(\int_0^{\tau_0}Y_t\,dt>y^2\right)
=C_4\in(0,\infty).\]
\end{lemma}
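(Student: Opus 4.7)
The plan is to mirror the proof template of Lemma~\ref{difext}. Set $A := \int_0^{\tau_0} Y_t\,dt$ and $f(y) := y^\delta P_1^Y(A > y^2)$; the goal is to show that $f$ is nondecreasing on $(1, \infty)$ and bounded above, so that $\lim_{y \to \infty} f(y) = \sup_{y > 1} f(y) =: C_4$ exists and lies in $(0, \infty)$. The three ingredients are: (a) the scaling identity of Lemma~\ref{elem}(i), which with the substitution $s = t/y$ implies that under $P_y^Y$ the area $A$ has the same distribution as $y^2 A$ under $P_1^Y$; (b) the exit formula of Lemma~\ref{elem}(ii); and (c) the already-proved tail bound $P_1^Y(\tau_0 > x) \leq C x^{-\delta}$ from Lemma~\ref{difext}.

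For monotonicity, let $1 < u < y$ and condition on $\{\tau_u < \tau_0\}$ via the strong Markov property:
\[
P_1^Y(A > y^2) \geq P_1^Y(\tau_u < \tau_0)\, P_u^Y(A > y^2) = u^{-\delta}\, P_1^Y\!\left(A > (y/u)^2\right),
\]
where the first factor uses Lemma~\ref{elem}(ii) with $a=0$, $b=u$, and the second uses scaling. Multiplying by $y^\delta$ yields $f(y) \geq f(y/u)$, so $f$ is nondecreasing.

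For boundedness, let $M := \sup_{t \leq \tau_0} Y_t$; since $A \leq M\tau_0$, for every $a > 0$
\[
\{A > y^2\} \subseteq \{M > ay\} \cup \{\tau_0 > y/a\}.
\]
Lemma~\ref{elem}(ii) gives $P_1^Y(M > ay) = (ay)^{-\delta}$, while Lemma~\ref{difext} gives $P_1^Y(\tau_0 > y/a) \leq C(a/y)^\delta$ for $y$ large. Hence $f(y) \leq a^{-\delta} + Ca^\delta$, so $f$ is bounded. Positivity of $C_4$ follows from monotonicity once one knows $f \not\equiv 0$ on $(1,\infty)$: e.g.\ $f(2) > 0$ because $P_1^Y(A > 4) > 0$, which holds by one more application of the scaling/strong-Markov lower bound (the process reaches a sufficiently high level with positive probability and then accumulates a large area before extinction).

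The main obstacle is the boundedness step: the trick of splitting $\{A > y^2\}$ into ``the process has been large'' and ``the process has lived long'' is precisely what lets us feed in the input from Lemma~\ref{difext}; without Lemma~\ref{difext} in hand we would have to work harder. The monotonicity and positivity parts are then purely formal. An alternative route would go through explicit Laplace-transform computations for the area under a squared Bessel process of dimension $2(1-\delta)$, but the soft scaling argument above avoids any special-function analysis and makes the parallel with Lemma~\ref{difext} transparent.
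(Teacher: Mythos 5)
Your proposal is correct and follows essentially the same approach as the paper: you obtain monotonicity of $f(y) = y^\delta P_1^Y(A>y^2)$ by conditioning on $\{\tau_u < \tau_0\}$, applying Lemma~\ref{elem}(ii), and using the scaling identity from Lemma~\ref{elem}(i), and you obtain boundedness by splitting $\{A > y^2\}$ into ``the process reached a high level'' and ``the process lived long'' and feeding in the tail bound for $\tau_0$ from Lemma~\ref{difup}. The paper's version is the special case $a=1$ of your boundedness step; that is a cosmetic rather than substantive difference.
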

\begin{proof}
  The proof uses scaling and follows the same steps as the proof of
  Lemma~\ref{difext}. For every $\epsilon>0$ and $y>1/\epsilon$ we
  have
\begin{align*}
  y^\delta P_1^Y&\left(\int_0^{\tau_0}Y_t\,dt>y^2\right)\ge y^\delta
  P_1^Y\left(\int_0^{\tau_0}Y_t\,dt>y^2\,\Big|\,\tau_{\epsilon
      y}<\tau_0\right) P_1^Y(\tau_{\epsilon y}<\tau_0)\\ \ge\,&y^\delta
  P_{\epsilon y}^Y\left(\int_0^{\tau_0}Y_t\,dt>y^2\right)(\epsilon
  y)^{-\delta} =\epsilon^{-\delta}P_{\epsilon
    y}^Y\left(\int_0^{\tau_0/(\epsilon y)} Y_{\epsilon ys}\,
    ds>\frac{y}{\epsilon}\right) \\=\,&\epsilon^{-\delta}P_{\epsilon
    y}^Y\left(\int_0^{\tau_0/(\epsilon y)} \frac{Y_{\epsilon
        ys}}{\epsilon y}\, ds>\epsilon^{-2}\right)
  =\epsilon^{-\delta} P_1^Y\left(\int_0^{\tau_0}
    Y_s\,ds>\epsilon^{-2}\right)>0.
\end{align*}
This calculation, in fact, just shows that \[y^\delta
P_1^Y\left(\int_0^{\tau_0}Y_t\,dt>y^2\right)\] is a non-decreasing
positive function of $y$. Therefore, we only need to prove that it is
bounded as $y\to\infty$. But for $y>1$
\begin{align*}
&y^\delta P_1^Y\left(\int_0^{\tau_0}Y_t\,dt>y^2\right)\\&=
 P_1^Y\left(\int_0^{\tau_0}Y_t\,dt>y^2\,\Big|\,
 \tau_y<\tau_0\right)+
 y^\delta P_1^Y\left(\int_0^{\tau_0}Y_t\,dt>y^2,\,\tau_y>\tau_0\right)
 \\[2mm]&\le 1+y^\delta P_1^Y\left(\tau_0>y,\,
 \tau_y>\tau_0\right)\le 1+y^\delta P_1^Y\left(\tau_0>y\right).
\end{align*}
An application of Lemma~\ref{difup} finishes the proof.
\end{proof}


\section{Conditions which imply Theorem~\ref{progeny}}\label{4}
 
We have shown that the diffusion process $Y$ has the desired asymptotic behavior
of the area under the path up to the exit time $\tau^Y_0$.
In this section we give sufficient conditions under which we can
``transfer'' this result to the process $V$ and obtain Theorem~\ref{progeny}.
\begin{lemma} \label{4.1}
  Suppose that
\begin{itemize}
\item[(A)] There is a constant $B$ such that $ n ^\delta
P^V_0(\sigma _0 > n)\le B$ for all $n\in\IN$;
\item[(B)] For every $\epsilon>0$ \[\lim_{n\to\infty} P_{\epsilon n}^V
  \left( \sum_{i=0} ^{\sigma_0-1} V_i>n^2\right) = P_1^Y\left(
    \int_0^{\tau_0} Y_t\,dt>\epsilon^{-2}\right);\]
\item[(C)] $\displaystyle\lim_{n\to\infty}n^\delta P_0^V\left(\tau_n<\sigma_0
\right)=C_5$.
\end{itemize}
Then \[\lim_{n\to\infty}n^\delta P_0^V\left(\sum_{i=0} ^{\sigma_0-1}
  V_i>n^2\right)=C_4C_5,\] where $C_4$ is the constant from Lemma~\ref{C}.
\end{lemma}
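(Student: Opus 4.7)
My plan is to decompose $\{S^V>n^2\}$ using the stopping time $\tau_{\epsilon n}$ for a small parameter $\epsilon>0$, use (C) to identify the probability of reaching level $\epsilon n$ before extinction, (B) to describe the subsequent excursion, and then let $\epsilon\to 0$ and invoke Lemma~\ref{C} to obtain the constant $C_4C_5$. Writing $S^V=\sum_{i=0}^{\sigma_0-1}V_i$, I split
\[
P_0^V(S^V>n^2)=P_0^V(S^V>n^2,\,\tau_{\epsilon n}\ge\sigma_0)+P_0^V(S^V>n^2,\,\tau_{\epsilon n}<\sigma_0).
\]
On the first event $V_i<\epsilon n$ for every $i<\sigma_0$, so $S^V\le\epsilon n\,\sigma_0$ and $\{S^V>n^2\}$ forces $\sigma_0>n/\epsilon$; condition (A) yields $n^\delta P_0^V(S^V>n^2,\,\tau_{\epsilon n}\ge\sigma_0)\le B\epsilon^\delta$, which vanishes with $\epsilon$. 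In the second term I set $R:=V_{\tau_{\epsilon n}}\ge\epsilon n$ and $S_1:=\sum_{i<\tau_{\epsilon n}}V_i$, so that $S^V=S_1+S_{\mathrm{post}}$ and conditionally on $\mathcal{F}_{\tau_{\epsilon n}}$ the post-contribution $S_{\mathrm{post}}$ is distributed as $S^V$ under $P_R^V$.

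For the liminf, I drop $S_1$ and use the monotonicity of $r\mapsto P_r^V(S^V>n^2)$, which follows from the coupling representation of $V$ via the sequences $B^{(k)}$ (since $F^{(k)}_m$ is non-decreasing in $m$):
\[
P_0^V(S^V>n^2,\,\tau_{\epsilon n}<\sigma_0)\ge P_0^V(\tau_{\epsilon n}<\sigma_0)\cdot P_{\epsilon n}^V(S^V>n^2).
\]
By (C), $n^\delta P_0^V(\tau_{\epsilon n}<\sigma_0)\to C_5\epsilon^{-\delta}$; by (B), $P_{\epsilon n}^V(S^V>n^2)\to P_1^Y(\int_0^{\tau_0}Y_t\,dt>\epsilon^{-2})$. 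Sending $\epsilon\to 0$ with $y=\epsilon^{-1}$ in Lemma~\ref{C} yields $\liminf_{n\to\infty} n^\delta P_0^V(S^V>n^2)\ge C_4C_5$.

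For the limsup, I absorb the pre-contribution by introducing the auxiliary threshold $\{S_1\le\epsilon^{1/2}n^2\}$: the complementary event forces $\sigma_0\ge\tau_{\epsilon n}\ge n\epsilon^{-1/2}$, contributing at most $B\epsilon^{\delta/2}$ to $n^\delta P_0^V(\cdot)$ by (A). On $\{S_1\le\epsilon^{1/2}n^2\}$ the inequality $S^V>n^2$ forces $S_{\mathrm{post}}>(1-\epsilon^{1/2})n^2$; the overshoot $R-\epsilon n$ is controlled by Chernoff bounds for sums of i.i.d.\ Geom$(1/2)$ random variables, which give $P_0^V(R>(1+\eta)\epsilon n\mid\tau_{\epsilon n}<\sigma_0)\to 0$ as $n\to\infty$ for every $\eta>0$. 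Combining these with monotonicity and (B) applied at $(1+\eta)\epsilon n$, after rescaling the target $(1-\epsilon^{1/2})n^2$ into $(n')^2$ with $n'=n\sqrt{1-\epsilon^{1/2}}$, I obtain
\[
\limsup_{n\to\infty} n^\delta P_0^V(S^V>n^2)\le B(\epsilon^\delta+\epsilon^{\delta/2})+C_5\epsilon^{-\delta}P_1^Y\!\left(\int_0^{\tau_0}Y_t\,dt>\frac{1-\epsilon^{1/2}}{(1+\eta)^2\epsilon^2}\right).
\]
Letting first $\eta\to 0$ and then $\epsilon\to 0$ and applying Lemma~\ref{C} one more time matches the lower bound at $C_4C_5$.

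The main obstacle is the joint control of the overshoot $R-\epsilon n$ and of the pre-contribution $S_1$ uniformly in $n$. The overshoot is handled by Chernoff-type tail bounds for one-step increments of $V$, while $S_1$ is absorbed into the error by a $O(\epsilon^{1/2})$ coarsening of the threshold and a reapplication of (A). Once these two technical points are in place, (B), (C), and Lemma~\ref{C} combine cleanly to produce the limit $C_4C_5$.
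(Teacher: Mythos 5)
Your proposal is correct and takes essentially the same approach as the paper: split at $\tau_{\epsilon n}$, use (A) to dispose of the event of never reaching level $\epsilon n$ and to absorb the pre-excursion progeny, use (C) for the probability of reaching $\epsilon n$, combine monotonicity with (B) for the post-excursion progeny, control the overshoot by an exponential bound (Lemma~\ref{overshoot} in the paper), and invoke Lemma~\ref{C} in the $\epsilon\to 0$ limit. Your parametrization differs only cosmetically (a progeny cutoff of $\epsilon^{1/2}n^2$ in place of the paper's $K\epsilon n^2$ with $K\to\infty$, and overshoot slack $1+\eta$ in place of $R$), but the structure of the argument is identical.
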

\begin{proof}
  Fix an $\epsilon\in(0,1)$ and split the path-space of $V$ into two parts,
  the event $H_{n,\epsilon}:=\left\{\tau_{\epsilon n
    }^V<\sigma_0^V\right\}$ and its complement, $H_{n,\epsilon}^c$.

  First, consider the behavior of the total progeny on the event
  $H_{n,\epsilon}^c$.  On $H_{n,\epsilon}^c$, the process $V$ stays
  below $\epsilon n$ until the time $\sigma_0$.  Estimating each
  $V_i$ from above by $\epsilon n$ and using (A) we get for all 
$n\in\IN$
  \[n^\delta P_0^V \left(\sum_{i=0} ^{\sigma_0-1} V_i>n^2,\
    H_{n,\epsilon}^c\right) \le n^\delta P_0^V(\sigma_0>n/\epsilon)\le
  (2\epsilon)^\delta B. \]  
Therefore, for all $n\in\IN$
\begin{align*}
  0\le
  n^\delta P_0^V \left(\sum_{i=0} ^{\sigma_0-1} V_i>n^2\right)-n^\delta P_0^V
\left(\sum_{i=0} ^{\sigma_0-1} V_i>n^2,\
    H_{n,\epsilon}\right)\le(2\epsilon)^\delta B.
\end{align*}
Hence, we only need to deal with the total progeny on the event
$H_{n,\epsilon}$.  The rough idea is that, on $H_{n,\epsilon}$, it
is not unnatural for the total progeny to be of order $n^2$. This means that the
decay of the probability that the total progeny is over $n^2$ 
comes from the decay of the
probability of $H_{n,\epsilon}$, which is essentially given by
condition (C). This would suffice if we could let $\epsilon=1$ but
we need $\epsilon$ to be small, thus, some scaling is necessary to
proceed with the argument, and this brings into play condition 
(B) and the result of Lemma~\ref{C}.

To get a lower bound on $F_n:=n^\delta P_0^V \left(\sum_{i=0} ^{\sigma_0-1}
  V_i>n^2,\ H_{n,\epsilon}\right)$ we use monotonicity of $V$
with respect to the initial number of particles, conditions
(B) and  (C), and Lemma~\ref{C}:
\begin{multline*}
  \lim_{\epsilon\to 0}\liminf_{n\to\infty} F_n
  \ge \lim_{\epsilon\to 0}\lim_{n\to\infty}n^\delta
    P_0^V(H_{n,\epsilon})P_{\epsilon n}^V
  \left(\sum_{i=0} ^{\sigma_0-1} V_i>n^2\right)\\
  =C_5\lim_{\epsilon\to 0}\epsilon^{-\delta}P_1^Y\left( \int_0^{\tau_0}
    Y_t\,dt>\epsilon^{-2}\right)=C_4C_5.
\end{multline*}

For an upper bound on $F_n$ we shall need two more parameters,
$K\in(1,1/\epsilon)$ and $R>1$. At the end, after taking the limits as
$n\to\infty$ and then $\epsilon\to 0$ we shall let $K\to\infty$ and
$R\to 1$.
\begin{multline*}
   n^{-\delta} F_n=
  P_0^V \left(\sum_{i=0}^{\tau_{ \epsilon n }-1}V_i+
    \sum_{i=\tau_{\epsilon n }}
    ^{\sigma_0-1}V_i>n^2,\ H_{n,\epsilon}\right)\\
  \le P_0^V \left(\sum_{i=\tau_{ \epsilon n }}
    ^{\sigma_0-1}V_i>n^2(1-K\epsilon),\ \sum_{i=0}^{\tau_{\epsilon n
      }-1}V_i\le K\epsilon n^2,\ H_{n,\epsilon}\right)\\+P_0^V
 \left(\sum_{i=0}^{\tau_{ \epsilon n
      }-1}V_i>K\epsilon n^2,\ H_{n,\epsilon}\right)
\end{multline*}
We bound the first term on the right-hand side by the following sum:
\begin{multline*}
 P_0^V
  \left(\sum_{i=\tau_{\epsilon n }} ^{\sigma_0-1}V_i>n^2(1-K\epsilon),\
    V_{\tau_{\epsilon n }}\le R\epsilon n,\
    H_{n,\epsilon}\right)\\+P_0^V
  \left(\sum_{i=\tau_{\epsilon n }} ^{\sigma_0-1}V_i>n^2(1-K\epsilon),\
    V_{\tau_{\epsilon n }}>R\epsilon n,\
    H_{n,\epsilon}\right).
\end{multline*}
Estimating these terms in an obvious way and putting everything back
together we get
\begin{align*}
n^{-\delta} F_n&\le P_{R\epsilon n}^V \left(\sum_{i=0}
    ^{\sigma_0-1}V_i>n^2(1-K\epsilon)\right)P_0^V(H_{n,\epsilon})
\\&+P_0^V \left(V_{\tau_{\epsilon n }}>R\epsilon n,\
    H_{n,\epsilon}\right)+P_0^V \left(\sum_{i=0}^{\tau_{\epsilon n
      }-1}V_i>K\epsilon n^2,\ H_{n,\epsilon}\right)\\&=(I)+(II)+(III).
\end{align*}
It only remains to multiply everything by $n^\delta$ and consider the
upper limits.

Term $n^\delta(I)$ gives the upper bound $C_4C_5$ in the same way as
we got a lower bound by sending $n\to\infty$, $\epsilon\to 0$, and
then $R\to 1$ and using easily verified continuity properties of the
relevant distributions. Parameter $K$ disappears when we let $\epsilon\to 0$.

Term $(II)$ is exponentially small in $n$ for fixed $\epsilon$ and $R$
(see Lemma~\ref{overshoot} below), thus $n^\delta (II)$ goes to zero as
$n\to\infty$.

Finally, since $V_i\le \epsilon n$ for all $i<\tau_{\epsilon n}$, we get
\begin{align*}
  n^\delta P_0^V \left(\sum_{i=0}^{\tau_{\epsilon n }-1}V_i>K\epsilon n^2,\
    H_{n,\epsilon}\right)&\le n^\delta P_0^V\left(\tau_{\epsilon n
    }>Kn, H_{n,\epsilon}\right)\\ \le n^\delta
  P_0^V\left(\sigma_0>Kn, H_{n,\epsilon}\right) &\le n^\delta
  P_0^V(\sigma_0>Kn)\le \frac{2^\delta B}{K^\delta}. \qedhere
\end{align*}
\end{proof}


\section{Main tools}\label{5}

The main result of this section is Lemma~\ref{main}, which is a
discrete analog of Lemma~\ref{elem} (ii). 

We start with two technical lemmas. The first one will be used many times
throughout the paper.
\begin{lemma}
\label{overshoot}
There are constants $c_1,c_2>0$ and $N\in\mathbb{N}$ such that for
every $x\ge N$ and $y\ge 0$, 
\begin{align}
  \sup_{0\le z<x}P_z^V\left(V_{\tau_x}>x+y\,|\,\tau_x<\sigma_0\right)&\le 
c_1( e^{-c_2y^2/x} + e^{-c_2y});\label{over} \\ \sup_{x<z< 4x} 
P_z^V(V_{\sigma_x\wedge \tau_{4x}}<x-y)&\le c_1
  e^{-c_2y^2/x}\label{under}.
\end{align}
\end{lemma}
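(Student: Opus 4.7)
Both bounds reduce, via a strong-Markov mixture representation, to uniform one-step tail-ratio estimates for the transition kernel of $V$. The common observation is that if $V$ exits a region far from the boundary, the single step taken at the exit time must itself be large.

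For (\ref{over}), on $\{V_{\tau_x}>x+y\}$ we have $V_{\tau_x-1}<x$, so the last jump exceeds $y$. Decomposing on $(\tau_x,V_{\tau_x-1})=(j,w)$ and applying the Markov property at time $j-1$ gives
\[
P_z^V(V_{\tau_x}>x+y,\,\tau_x<\sigma_0)=\sum_{j\ge 1}\sum_{0\le w<x}P_z^V(\tau_x\wedge\sigma_0>j-1,\,V_{j-1}=w)\,P_w^V(V_1>x+y),
\]
with the analogous identity for $P_z^V(\tau_x<\sigma_0)$ having $P_w^V(V_1\ge x)$ in place of $P_w^V(V_1>x+y)$. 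Dividing bounds the conditional probability by
\[
\sup_{0\le w<x}\frac{P_w^V(V_1>x+y)}{P_w^V(V_1\ge x)},
\]
reducing the claim to a uniform tail-ratio estimate. For $w\ge M$, by (\ref{defV}) the law of $V_1$ under $P_w^V$ is the convolution of the bounded $\sum_{m=1}^M\zeta_m$ with an independent sum $S_n:=\sum_{m=1}^n\xi_m$ of $n:=w-M+1$ i.i.d.\ $\mathrm{Geom}(1/2)$ variables. An exponential-tilting argument produces
\[
\frac{P(S_n>n+s+y)}{P(S_n>n+s)}\le c\bigl(e^{-c'y^2/n}+e^{-c'y}\bigr),\qquad s,y\ge 0,\ n\ge 1,
\]
uniformly in $s$; together with $n\le x$ and the bounded $\zeta$-contribution this yields the desired bound $c_1(e^{-c_2 y^2/x}+e^{-c_2 y})$. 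The small-$w$ case ($w<M$) is handled separately, since $V_1=\sum_{m=1}^{w+1}\zeta_m$ is then a sum of at most $M$ variables with exponentially-decaying tails.

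The proof of (\ref{under}) follows the same blueprint. On $\{V_{\sigma_x\wedge\tau_{4x}}<x-y\}$ we must have $\sigma_x<\tau_{4x}$ (otherwise $V_{\tau_{4x}}\ge 4x>x-y$), and the last step is a downward jump of magnitude exceeding $y$. The mixture argument reduces the bound to
\[
\sup_{x<w<4x}\frac{P_w^V(V_1<x-y)}{P_w^V(V_1\notin(x,4x))}\le c_1 e^{-c_2 y^2/x}.
\]
The absence of the $e^{-c_2 y}$ term here reflects the fact that $V_1\ge 0$ caps downward jumps, so downward deviations of $V_1$ from its mean are sub-Gaussian rather than sub-exponential. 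The main technical point in both parts is producing the tail-ratio estimate with the right constants, which is most cleanly done via exponential change of measure: tilt so that $n+s$ is the tilted mean and control the $y$-shift using sub-Gaussian concentration of $S_n$ under the tilted law together with the exponential prefactor $e^{-\lambda y}$ arising from the Radon-Nikodym derivative.
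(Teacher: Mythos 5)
Your reduction to the one-step tail ratio $\sup_{0\le w<x}P_w^V(V_1>x+y)/P_w^V(V_1\ge x)$ matches the paper's first step exactly. Where you diverge is in how that ratio is controlled. The paper introduces the crossing index $N_x$ (the first $n$ for which the running sum $z+1+\sum_{m\le n}(\xi_m-1)$ reaches $x$) and splits the bound into two elementary pieces: (a) a uniform positive lower bound on $P_z(V_1\ge x\mid\mathcal{G}_{N_x})$, which shows the denominator is comparable to $P_z(N_x\le z+1)$, and (b) an upper bound on $P_z(V_1>x+y\mid N_x\le z+1)$ obtained by a Chernoff estimate (Lemma~\ref{ldg}) for the post-$N_x$ partial sum and a direct geometric-tail estimate for the overshoot $\xi_{N_x}$ at the crossing. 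Crucially, this never requires a tail-ratio estimate that is uniform in the location of the threshold relative to the mean of $S_n$.

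You instead take as the crux the uniform-in-$s$ estimate
\[
\frac{P(S_n>n+s+y)}{P(S_n>n+s)}\le c\bigl(e^{-c'y^2/n}+e^{-c'y}\bigr),\qquad s,y\ge 0,\ n\ge 1,
\]
to be produced ``by an exponential-tilting argument,'' but this is essentially the entire content of the lemma and you leave it unproved. Establishing uniformity in $s$ for geometric summands is genuinely delicate: the moment generating function of a $\mathrm{Geom}(1/2)$ variable blows up at $\log 2$, so the tilt saturates as $s/n\to\infty$ and the tail crosses over from a Gaussian moderate-deviation regime to a one-big-jump, pure-exponential regime; your sketch (sub-Gaussian concentration under the tilted law plus the $e^{-\lambda y}$ prefactor) does not engage with the saturated regime nor with the lower bound on the denominator needed after tilting. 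A smaller error: $\sum_{m=1}^M\zeta_m$ is not bounded --- the number of failures before the $M$-th success is unbounded, merely exponentially tailed --- so folding this shift into your tail-ratio argument needs an extra step, in particular for the range of shifts where the effective threshold $x-a-n$ drops below zero and your $s\ge0$ restriction fails. The paper's $N_x$ device absorbs both issues automatically, since the cookie-affected $\xi_1,\dots,\xi_M$ simply participate in the crossing sum and the unbounded overshoot is handled in the same way as for the geometric terms.
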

\noindent This statement is a consequence of the fact that the offspring
distribution of $V$ is essentially geometric. The proof is
given in the Appendix.

\begin{lemma}\label{scale}
Fix $a \in (1,2]$.  Consider the process $V$ with $|V_0
-a^n| \le a^{2n/3}$ and let $\gamma=\inf
\{k\ge 0\,|\, V_k \notin (a^{n-1},a^{n+1} ) \}$.  Then for all
sufficiently large $n$
\begin{align*}
  (&i)\quad P^V\Big(\mathrm{dist}(V_\gamma ,(a^{n-1},a^{n+1})) \ge
  a^{2(n-1)/3}\Big) \le \exp(-a^{n/4});\\(i&i)\quad
  \left|P^V(V_\gamma\leq a^{n-1}) - \frac{a^\delta}{a^\delta +1}\right| \leq
a^{-n/4}.
\end{align*}
\end{lemma}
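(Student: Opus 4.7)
Part (i) is a direct consequence of Lemma~\ref{overshoot}. On the upper-exit event $\{V_\gamma\ge a^{n+1}\}$, one has $V_k>a^{n-1}>0$ for all $k<\gamma$, hence $\tau_{a^{n+1}}=\gamma<\sigma_0$, so (\ref{over}) applied with $x=a^{n+1}$ and $y=a^{2(n-1)/3}$ gives an overshoot bound of order $\exp(-c\,a^{(n-1)/3})$. On the lower-exit event, the hypothesis $a\le 2$ yields $a^{n+1}\le 4\,a^{n-1}$, so $\gamma\le\sigma_{a^{n-1}}\wedge\tau_{4a^{n-1}}$ on this event, and (\ref{under}) with $x=a^{n-1}$ gives a bound of the same form. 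Both are $o(\exp(-a^{n/4}))$ as $n\to\infty$.

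For part (ii), the key idea is that $\phi(x):=x^\delta$ is harmonic for the limit diffusion $Y$: its generator $Lf=xf''+(1-\delta)f'$ annihilates $x^\delta$, which is precisely the computation underlying Lemma~\ref{elem}\,(ii). One therefore expects $\phi(V_k)$ to be an approximate martingale while $V_k$ stays in the range $(a^{n-1},a^{n+1})$. Combining (\ref{drift}) with $\mathrm{Var}(V_{k+1}\mid V_k)=2V_k+O(1)$ and the uniform bound $E^V[|V_{k+1}-V_k|^3\mid V_k]=O(V_k^{3/2})$ (which reflects the essentially geometric tails of the offspring distribution, in the spirit of Lemma~\ref{overshoot}), a Taylor expansion of $\phi$ around $V_k$ yields
\[
E^V[\phi(V_{k+1})-\phi(V_k)\mid V_k]=\delta(1-\delta)V_k^{\delta-1}+\delta(\delta-1)V_k^{\delta-1}+O\bigl(V_k^{\delta-3/2}\bigr)=O\bigl(V_k^{\delta-3/2}\bigr),
\]
the leading $V_k^{\delta-1}$ terms cancelling exactly as they do in $L(x^\delta)$.

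A separate rough estimate $E^V[\gamma]=O(a^n)$ (obtained either from a Lyapunov argument using $\phi$ itself or from the diffusion approximation of Lemma~\ref{weakcon}), combined with optional stopping, gives $E^V[\phi(V_\gamma)]=\phi(V_0)+O(a^{n(\delta-1/2)})$. By part~(i), outside an event of probability at most $\exp(-a^{n/4})$ we have $V_\gamma=a^{n\mp 1}\bigl(1+O(a^{-(n-1)/3})\bigr)$ on the lower/upper exit, and $\phi(V_0)=a^{n\delta}\bigl(1+O(a^{-n/3})\bigr)$ by hypothesis. Writing $p_n:=P^V(V_\gamma\le a^{n-1})$ and substituting, the identity becomes
\[
a^{n\delta}=p_n\,a^{(n-1)\delta}+(1-p_n)\,a^{(n+1)\delta}+O\bigl(a^{n\delta-n/3}\bigr).
\]
Dividing by $a^{n\delta}$ and solving for $p_n$ gives $p_n=a^\delta/(a^\delta+1)+O(a^{-n/3})$, sharper than the required $a^{-n/4}$.

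The main obstacle is the quantitative error bookkeeping in the middle paragraph: one must establish the third-moment bound on the increments uniformly in $V_k\ge a^{n-1}$ and an a priori bound on $E^V[\gamma]$. Both reduce to routine estimates on sums of near-geometric random variables, but must be carried out carefully (and in particular the Taylor remainder must be controlled even on the rare event that $V_{k+1}$ is much smaller than $V_k$) to avoid losing powers of $a^n$.
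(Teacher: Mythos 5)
Your proposal is essentially the paper's own proof. Part (i) is handled the same way — the paper simply declares it ``an immediate consequence of Lemma~\ref{overshoot}'', and your accounting (upper exit via (\ref{over}), lower exit via (\ref{under}) after observing $a\le 2\Rightarrow a^{n+1}\le 4a^{n-1}$) is exactly the intended application. For part (ii), you use the same idea the paper does: $x^\delta$ is harmonic for the generator $xf''+(1-\delta)f'$ of the approximating diffusion, so $\phi(V_k)=V_k^\delta$ should be an approximate martingale up to time $\gamma$; Taylor expansion, cancellation of the $V_k^{\delta-1}$ terms, a third-moment bound on the increment, an a priori bound $E\gamma=O(a^n)$ (the paper supplies this via Proposition~\ref{time}), and optional stopping close the argument, with part (i) controlling the value of $\phi$ at the exit time.

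The one place you diverge from the paper is precisely the place you flag as ``the main obstacle''. The paper does not Taylor-expand $\phi(x)=x^\delta$ directly; it replaces $\phi$ by a compactly supported $s\in C^\infty_0([0,\infty))$ with $s(x)=x^\delta$ on $(2/(3a),3a/2)$ and works with $U^n_k = s(V_{k\wedge\gamma}/a^n)$. Since $\|s'''\|_\infty<\infty$, the Taylor remainder at each step is controlled by a single uniform constant times $E[((V_{k+1}-V_k)/a^n)^3\mid\mathcal F_k]$, so there is no issue with the remainder on the rare event where $V_{k+1}$ drops far below $V_k$. In your formulation, $\phi'''(x)=\delta(\delta-1)(\delta-2)x^{\delta-3}$ is unbounded near $0$, so the ``rare event'' you mention really does require a separate argument (splitting off the exponentially small event $\{|V_{k+1}-V_k|>V_k/2\}$ and bounding $\phi$ crudely there), rather than being subsumed into a routine moment estimate. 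The paper's $s$-trick is worth adopting: it eliminates this bookkeeping entirely. Otherwise your error analysis matches the paper's — they also end up with an $O(a^{-n/3})$ relative error, which is absorbed into the stated $a^{-n/4}$ for $n$ large.
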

\noindent Part (i) is an immediate consequence of Lemma~\ref{overshoot}.
The proof of part (ii) is basic but technical and is given in the Appendix.
\begin{lemma}[Main lemma]\label{main}
  For each $a \in (1,2]$ there is an $\ell_0\in\IN$ such that if
  $\ell,m,u,x\in\mathbb{N}$ satisfy $\ell_0\le\ell <m< u$ and $|x-a^m|\le
  a^{2m/3}$ then
  \[\frac{h_a^-(m)-1}{h_a^-(u)-1}\le
  P_x^V(\sigma_{a^\ell}>\tau_{a^u})\le
  \frac{h_a^+(m)-1}{h_a^+(u)-1},\] where
  \[h_a^{\pm}(i)=\prod_{r=\ell+1}^i\left(a^\delta\mp a^{-\lambda
      r}\right),\quad i>\ell,\] and $\lambda$ is some small positive number not
  depending on $\ell$.
\end{lemma}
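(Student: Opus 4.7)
The plan is to reduce $V$ to an approximate birth-death chain on the integer ``levels'' $n$ indexed by $V \approx a^n$, and then apply the classical scale-function formula for such chains. Lemma~\ref{scale} is tailored exactly for this reduction: part~(i) controls overshoot as $V$ exits a shell $(a^{n-1},a^{n+1})$, so after each exit $V$ is again well-centred near some $a^{n'}$ with $|V - a^{n'}|\le a^{2n'/3}$; part~(ii) pins down the transition probabilities of the induced level chain as $p_n = 1/(a^\delta+1) + O(a^{-n/4})$ up and $q_n = a^\delta/(a^\delta+1) + O(a^{-n/4})$ down.

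I would first define inductively stopping times $\gamma_0 = 0<\gamma_1<\gamma_2<\cdots$ at which $V$ successively leaves shells $(a^{L_k-1}, a^{L_k+1})$, where $L_0=m$ and $L_{k+1}= L_k\pm 1$ according to the side of exit; stop at $k^*=\inf\{k:L_k\in\{\ell,u\}\}$. By Lemma~\ref{scale}(i), with probability at least $1-\exp(-a^{L_k/4})$ the overshoot condition $|V_{\gamma_{k+1}}-a^{L_{k+1}}|\le a^{2L_{k+1}/3}$ is preserved, so the induction continues and the hypotheses of Lemma~\ref{scale} remain available at each step. Summing the failure probabilities gives a total exceptional event of size at most $\sum_{n=\ell_0}^{u}\exp(-a^{n/4})$, which is superpolynomially small in $a^\ell$; on its complement the event $\{\sigma_{a^\ell}>\tau_{a^u}\}$ for $V$ coincides with the event $\{L_{k^*}=u\}$ for the level chain.

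Next I would apply the standard difference-equation argument to the level chain's hitting probability $F(n)=P_{\text{near }a^n}(\tau_{a^u}<\sigma_{a^\ell})$, which by strong Markov satisfies $F(n)=p_nF(n+1)+q_nF(n-1)$ with $F(\ell)=0,F(u)=1$. From Lemma~\ref{scale}(ii), $\phi_n:=q_n/p_n$ lies in $[a^\delta-a^{-\lambda n},\,a^\delta+a^{-\lambda n}]$ for any fixed $\lambda$ slightly below $1/4$ and all $n\ge \ell_0$. By a standard coupling comparison (a chain with pointwise larger $\phi_r$ is more down-biased, hence less likely to reach $u$ before $\ell$), $F(m)$ is sandwiched between $F^\pm(m)$ associated with the two extreme chains whose ratios are $\phi_r^\pm=a^\delta\mp a^{-\lambda r}$. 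Each extremal $F^\pm(m)$ equals the scale-function ratio $\sum_{j=\ell}^{m-1}h_a^\pm(j)\big/\sum_{j=\ell}^{u-1}h_a^\pm(j)$, and the telescoping identity
\[
h_a^\pm(m)-1 \;=\; \sum_{j=\ell}^{m-1} h_a^\pm(j)\bigl(a^\delta-1\mp a^{-\lambda(j+1)}\bigr)
\]
converts this into $(h_a^\pm(m)-1)/(h_a^\pm(u)-1)$ up to a multiplicative correction of size $1+O(a^{-\lambda \ell})$, which can be absorbed by shrinking $\lambda$ by an arbitrarily small amount and taking $\ell_0$ large.

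The main obstacle will be the bookkeeping of three different error sources so that they all end up folded into the single factor $a^{-\lambda r}$: (a) the superpolynomially small ``bad overshoot'' events from Lemma~\ref{scale}(i); (b) the genuine $O(a^{-n/4})$ perturbation of $\phi_n$ from Lemma~\ref{scale}(ii); and (c) the non-exactness of the telescoping from scale-function sums to the product form $h_a^\pm(m)-1$ when $\phi_r$ varies with $r$. Of these, (a) and (b) are easily controlled, but (c) forces one to choose $\lambda$ strictly smaller than $1/4$ and to verify that the leftover correction factors -- which are uniform in $m$ and $u$ once $\ell\ge\ell_0$ -- can be swallowed by the slight extra slack gained from this choice. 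This is the delicate but routine part of the argument.
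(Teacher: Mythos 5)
Your overall strategy (reduce to a level chain on the shells $(a^{n-1},a^{n+1})$, use the transition-probability estimates of Lemma~\ref{scale}, and compare to a scale function) is in the same spirit as the paper, but two steps in your reduction do not survive scrutiny.

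\textbf{The union bound on ``bad'' overshoot events is fatal for the upper bound (and for the lower bound).} You propose to excise the event that some shell exit overshoots by more than $a^{2L_{k+1}/3}$, and then argue on its complement. Summing the per-step probabilities gives an error of order $\exp(-a^{\ell_0/4})$ --- a quantity that is small but fixed once $\ell_0$ is fixed, and in particular does \emph{not} depend on $m$ or $u$. The target quantity $(h_a^+(m)-1)/(h_a^+(u)-1)$ is of order $a^{(m-u)\delta}$, which tends to $0$ as $u\to\infty$; your additive error does not, so for $u$ large the inequality you want cannot be recovered this way. (The symmetric problem appears for the lower bound: subtracting a fixed constant from $P(L_{k^*}=u)$ gives a vacuous estimate for large $u$.) The paper avoids this by \emph{not} discarding the large-jump events: it keeps them in the level chain $R$ (so $R$ may jump up by more than one level) and shows that $h_a^+(R_j)$ is a supermartingale. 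In that computation, the contribution of a jump from level $i$ to level $n\ge i+2$ is weighted by $h_a^+(n)\approx a^{(n-\ell)\delta}$, a polynomially growing factor, against a superexponentially small probability $P_i^R(R_1\ge n)=O(\exp(-a^{n/4}))$, so the whole tail sum is absorbed into the $O(a^{-2\lambda i})$ correction. That is a weighted sum, not a union bound, and it scales correctly.

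\textbf{The level chain of $V$ itself is not Markov, and you need an explicit monotone coupling.} Your $(L_k)$ records the shell index of $V$ at successive exit times, but the law of the next exit depends on the exact value of $V_{\gamma_k}$, not just on $L_k$. The paper repairs this by constructing, at each exit time, a comparison process $\widetilde V$ that is pushed to a canonical anchor point $x_n=[a^n\pm a^{2n/3}]$ (adding particles for the upper bound, removing them for the lower bound, so that $\widetilde V\gtrless V$ is preserved thanks to monotonicity of the branching mechanism in the initial population). The induced chain $R$ of $\widetilde V$ on the anchor levels \emph{is} Markov, and the optional stopping theorem applied to the super/sub-martingale $h_a^\pm(R_j)$ gives the stated bounds directly, with no residual $(1+O(a^{-\lambda\ell}))$ correction to absorb. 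This also sidesteps the telescoping step you flag as ``delicate'': you would indeed have to take two different $\lambda$'s (one for the extremal chain, a smaller one for the target) and verify a nontrivial inequality between the two scale-function ratios, which is more work than the paper's direct supermartingale verification.

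In short: the reduction to a shell chain with approximately constant bias is the right idea, but you must (a) carry the multi-level overshoot jumps inside the chain rather than delete them, and (b) normalize the chain at exit times via a monotone comparison process so that it really is Markov. Once you do both, showing $h_a^+(R_j)$ is a supermartingale (and $h_a^-(R_j)$ a submartingale) and applying optional stopping is cleaner than passing through the explicit scale-function sum.
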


\begin{remark}\label{rml}
 {\em It is to be noted that for fixed $\ell$ there are $K_1(\ell)$
   and $K_2(\ell)$ such that  
\[K_1(\ell) \le  \frac{h^{\pm}_a(i)}{a^{(i-\ell)\delta}} \leq
K_2(\ell)\quad\text{for all } i >\ell\] and $K_j(\ell)\to 1$ as
   $\ell \rightarrow \infty $, $j=1,2$.}
\end{remark}

\begin{proof}[Proof of Lemma \ref{main}] We will show the upper bound
  by comparing the process $V$ with another process $\widetilde{V}$,
  whose exit probabilities can be estimated by further reduction to an
  exit problem for a birth-and-death-like Markov chain.

  For $i\in\IN$ set $x_i=[a^i+a^{2i/3}]$. By monotonicity, it is
  enough to prove the upper bound when the starting point $x$ is equal
  to $x_m$. Thus, we set $V_0=x_m$.  The comparison will be done in two
  steps.

{\em Step 1.} We shall construct a sequence of stopping times $\gamma_i$, $i\ge
0$, and a comparison process $\widetilde{V}=(\widetilde{V}_k)_{k\ge 0}$ with
$x_\ell$ as an absorbing point so that $\widetilde{V}_k\ge V_k$ for all $k$
before the absorption.  Let $\gamma_0=0$,
\[\gamma_1=\inf\{k>0\,|\,V_k\not\in(a^{m-1},a^{m+1})\},\quad
\widetilde{V}_k=V_k\ \text{ for }\ k=0,1,\dots,\gamma_1-1,\] and at time
$\gamma_1$ add to $V_{\gamma_1}$ the necessary number of particles to get
\begin{equation*}
 \widetilde{V}_{\gamma_1}=
 \begin{cases}
x_{m-1},&\text{if } V_{\gamma_1}\le a^{m-1};\\
x_{m+j},&\text{if }
x_{m+j-1}<V_{\gamma_1}\le x_{m+j},\ j\in\IN.
  \end{cases}
\end{equation*}
Clearly, $\widetilde{V}_{\gamma_1}\ge V_{\gamma_1}$. By construction,
$\widetilde{V}_{\gamma_1}=x_n$ for some $n\ge m-1,\ n\ne m$.  If
$\widetilde{V}_{\gamma_1}=x_\ell$, then we stop the
process. 

Assume that we have already defined stopping times $\gamma_r$,
$r=0,1,\dots,i$, and the process $\widetilde{V}_k$ for all $k\le
\gamma_i$ so that $\widetilde{V}_{\gamma_i}=x_n$ for some $n>
\ell$. We define $\widetilde{V}_k$ for $k>\gamma_i$ by applying to it
the same branching mechanism as for $V$, namely, (\ref{defV}) with $V$
replaced by $\widetilde{V}$, $k\ge \gamma_i$.  Denote by
$\gamma_{i+1}$ the first time after $\gamma_i$ when $\widetilde{V}$
exits the interval $(a^{n-1},a^{n+1})$. At time $\gamma_{i+1}$, if the
process exited through the lower end of the interval then we set
$\widetilde{V}_{\gamma_{i+1}}=x_{n-1}$, if the process exited the
through the upper end we add to $\widetilde{V}$ the minimal number of
particles needed to get $\widetilde{V}_{\gamma_{i+1}}=x_s$ for some
$s>n$. If $\widetilde{V}_{\gamma_{i+1}}=x_\ell$, then we stop the
process.  Thus, we obtain a sequence of stopping times $\gamma_i$,
$i\ge 0$, and the desired dominating process $\widetilde{V}$ absorbed
at $x_\ell$ such that $\widetilde{V}_{\gamma_i}\in
\{x_\ell,x_{\ell+1},\dots\}$, $i\ge 0$.

{\em Step 2.} Define a Markov chain $R=(R_j)_{j\ge 0}$ on
$\{\ell,\ell+1,\dots\}$ by setting \[R_j=n\quad \text{if} \quad
\widetilde{V}_{\gamma_j}=x_n,\quad j\ge 0.\] The state $\ell$ is
absorbing. Let $\sigma^R_\ell=\inf\{j\ge 0\,|\, R_j=\ell\}$ and
$\tau^R_u=\inf\{j\ge 0\,|\, R_j\ge u\}$. By
construction, \[P^V_{x_m}(\sigma^V_{a^\ell}>\tau^V_{a^u})\le
P^{\widetilde{V}}_{x_m}(\sigma^{\widetilde{V}}_{x_\ell}>\tau^{\widetilde{V}}_{x_u})=
P^R_m(\sigma^R_\ell>\tau^R_u).\] We shall show that
$(h^+_a(R_j))_{j\ge 0}$ is a supermartingale with respect to the
natural filtration. (We set $h^+_a(\ell)=1$.)  The optional stopping
theorem and monotonicity of function $h^+_a$ will immediately imply
the upper bound in the statement of the lemma.

For $i >\ell$ we have
\begin{multline*}
  E^R_i\left(h_{a}^{+} (R_{1})\right) = h_{a}^{+} (i-1) P^R_i (R_1 =
  i-1) \\+ h_{a}^{+} (i+1) P^R_i (R_1 = i+1) +
  \sum_{n =i+2}^\infty h_{a}^{+} (n) P^R_i (R_1=n) 
\end{multline*}
By the definition of $h^+_a$ this is less or equal than
\begin{multline}\label{h+}
  h_{a}^{+}(i) \bigg[(a^{\delta} - a^{-\lambda i})^{-1}P^R_i (R_1 = i-1) 
\\+(a^{\delta} - a^{-\lambda(i+1)}) P^R_i (R_1 =
    i+1) + \sum_{n = i+2}^\infty
    a^{\delta(n-i)} P^R_i (R_1 = n)\bigg].
\end{multline}
By Lemma~\ref{scale} and Lemma~\ref{overshoot} we have that for
all $i>\ell$, where $\ell$ is chosen sufficiently large,
\begin{align*}
  &P^R_i (R_1 = i-1)=\frac{a^\delta}{a^\delta+1}+O(a^{-i/4}),\\
  &P^R_i (R_1 = i+1)= \frac{1}{a^\delta+1}+O(a^{-i/4}),
  \ \text{and }\\
  &P^R_i (R_1 \ge n)\le P^R_{n-2}(R_1 \ge n)=O(\exp(-a^{n/4}))\
  \text{for all }n\ge i+2.
\end{align*}
Substituting this into (\ref{h+}) and performing elementary
computations we obtain 
\begin{equation*}
  E^R_i\left(h_{a}^{+} (R_{1})\right) \le h_{a}^{+}(i)
  \bigg[1-\frac{a^{-\lambda
i}}{a^\delta+1}\,\left(a^{-\lambda}-a^{-\delta}\right)
   + O\big(a^{-2\lambda i}\big)\bigg]
\le h^+_a(i),
\end{equation*}
provided that $\lambda\le\min\{1/8,\delta\}$ and $\ell$ (therefore
$i$) is sufficiently large.

For the lower bound we argue in a similar manner, except that now we
choose $x_m=[a^m-a^{2m/3}]+1$, assume that $V_0=x_m$, and construct a
comparison process $\widetilde{V}=(\widetilde{V}_k)_{k\ge 0}$ absorbed
at $x_\ell$ so that $\widetilde{V}_0=V_0$ and $\widetilde{V}_k\le V_k$
for all $k$ before the absorption.

More precisely, we let $\gamma_0=0$ and assume that we have already
defined stopping times $\gamma_r$, $r=0,1,\dots,i$, and the process
$\widetilde{V}_k$ for all $k\le \gamma_i$ so that
$\widetilde{V}_{\gamma_i}=x_n$ for some $n\ne\ell$. We define
$\widetilde{V}_k$ for $k>\gamma_i$ by (\ref{defV}) with $V$ replaced
by $\widetilde{V}$, $k\ge \gamma_i$.  Denote by $\gamma_{i+1}$ the
first time after $\gamma_i$ when $\widetilde{V}$ exits the interval
$(a^{n-1},a^{n+1})$. At time $\gamma_{i+1}$, if the process exited
through the upper end of the interval we set
$\widetilde{V}_{\gamma_{i+1}}=x_{n+1}$, if the process exited
through the lower end we reduce the number of particles by
removing the minimal number of particles to ensure that
$\widetilde{V}_{\gamma_{i+1}}=x_s$ for some $s<n$. If
$\widetilde{V}_{\gamma_{i+1}}\le x_\ell$, then we stop the process and redefine
$\widetilde{V}_{\gamma_{i+1}}$ to be $x_\ell$.  This
procedure allows us to obtain a sequence of stopping times $\gamma_i$,
$i\ge 0$, and the desired comparison process $\widetilde{V}$ 
absorbed at $x_\ell$ such that $V_{\gamma_i}\in \{x_\ell,x_{\ell+1},\dots\}$,
$i\ge 0$. 

Next, just as in the proof of the upper bound, we construct a Markov
chain $R=(R_j)_{j\ge 0}$ and show that $(h^-_a(R_j))_{j\ge 0}$ is a
submartingale (with $h^-_a(\ell)$ defined to be $1$).  The optional
stopping theorem and monotonicity of function $h^-_a$ imply the lower
bound.
\end{proof}

\begin{corollary} \label{boundtau} For each non-negative integer $x$ there
  exists a constant $C_6 = C_6(x)$ such that for every $n\in\IN$
\begin{equation}\label{bt1}
 n^\delta P^V_x( \tau_n < \sigma _0) \leq C_6.
\end{equation} 
Moreover, for each $\epsilon>0$ there is a constant $c_3=c_3(\epsilon)$ such
that for all $n\in\IN$
\begin{equation}\label{bt2}
   P^V_n(\sigma_0 > \tau_{c_3n}) < \epsilon.
\end{equation}
\end{corollary}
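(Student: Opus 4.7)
I would prove both parts by combining Lemma~\ref{main} with a cycle (renewal) argument that handles returns of $V$ to a bounded range. Fix $a \in (1,2]$ and let $\ell_0$ be large enough for Lemma~\ref{main} to apply. A preliminary ingredient, used throughout, is the uniform positive lower bound
\[
q := \inf_{0 \le y \le a^{\ell_0}} P_y^V\bigl(\sigma_0 < \tau_{a^{\ell_0+1}}\bigr) > 0,
\]
which one obtains by lower-bounding the probability that the first $y+1$ coin tosses at one site are all successes: assumption (A2) makes $\IE[\prod_{i=1}^M \omega_0(i)]>0$, and since $y$ ranges over a bounded set the bound is uniform.

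For part~(i) I first handle $x \le a^{\ell_0}$. By the strong Markov property applied at successive returns of $V$ to $[0, a^{\ell_0}]$, the number $N$ of up-crossings from $[0, a^{\ell_0}]$ into $[a^{\ell_0+1}, \infty)$ before $\sigma_0$ is stochastically dominated by a $\mathrm{Geom}(q)$ random variable, so $E[N] \le 1/q$. For each up-crossing, the entry value $V_{\tau_{a^{\ell_0+1}}}$ lies within the window $|y - a^{\ell_0+1}| \le a^{2(\ell_0+1)/3}$ required by Lemma~\ref{main}, except on an exponentially small event controlled by Lemma~\ref{overshoot}; inside the window, Lemma~\ref{main} with $\ell=\ell_0$, $m=\ell_0+1$, $u=\lfloor \log_a n\rfloor$, combined with Remark~\ref{rml}, bounds the probability of reaching $n$ before returning to $[0,a^{\ell_0}]$ by $C(\ell_0)/n^\delta$. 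Summing over up-crossings gives
\[
P_x^V(\tau_n < \sigma_0) \le E[N]\cdot \frac{C(\ell_0)}{n^\delta} = O\bigl(n^{-\delta}\bigr).
\]
For general $x$, either $x \le a^{\ell_0}$ (done), or we pick $m \ge \ell_0+1$ with $|x - a^m| \le a^{2m/3}$ (possibly shifting $x$ upward using monotonicity of $V$ in its initial condition), apply Lemma~\ref{main} to the first excursion, and fold the residual contribution after a first drop to $[0, a^{\ell_0}]$ into the previous case; this produces a constant $C_6$ depending on $x$.

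Part~(ii) follows from the same decomposition applied at $\sigma_{a^{\ell_0}}$:
\[
P_n^V(\sigma_0 > \tau_{c_3 n}) \le P_n^V\bigl(\tau_{c_3 n} < \sigma_{a^{\ell_0}}\bigr) + \sup_{y \le a^{\ell_0}} P_y^V(\tau_{c_3 n} < \sigma_0).
\]
Lemma~\ref{main}, applied from a point $n' \ge n$ satisfying $|n' - a^m| \le a^{2m/3}$ and to level $a^u \ge c_3 n$, bounds the first term by $(h_a^+(m)-1)/(h_a^+(u)-1) \le C/c_3^\delta$ via Remark~\ref{rml}, once $u-m \approx \log_a c_3$. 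The second term is $O(1/(c_3 n)^\delta) \le O(1/c_3^\delta)$ by part~(i). Choosing $c_3 > (C'/\epsilon)^{1/\delta}$ concludes the proof.

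The main technical nuisance I anticipate is the bookkeeping around overshoots and around the requirement in Lemma~\ref{main} that the starting point lie in the dyadic window $|x - a^m| \le a^{2m/3}$: both at the entry time $\tau_{a^{\ell_0+1}}$ in part~(i) and when starting from a general integer $n$ in part~(ii), the relevant point may fall outside any such window. I expect this to be handled routinely by splitting on the overshoot size (Lemma~\ref{overshoot} makes large overshoots exponentially rare) and by using monotonicity of $V$ in its initial condition to shift $m$ by at most a bounded amount without affecting the $O(n^{-\delta})$ or $O(c_3^{-\delta})$ rate.
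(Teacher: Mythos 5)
Your proposal is correct and is essentially the paper's proof: your up-crossing count $N$ matches the paper's stopping times $\beta_i$, and in both arguments one combines the geometric decay of the number of excursions above $a^{\ell_0+1}$ with Lemma~\ref{main} (using Lemma~\ref{overshoot} to control the entry overshoot) and Remark~\ref{rml}, then for (ii) splits at $\sigma_{a^{\ell_0}}$ and applies part (i) to the tail. Your explicit justification of the geometric decay of the excursion count---forcing the first $V_k+1$ tosses at a site to be successes, with a lower bound uniform over $y\le a^{\ell_0}$ by (A2)---is a step the paper asserts without proof, but the overall structure is the same.
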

\begin{remark}
  {\em In fact, (\ref{bt1}) will be substantially improved by
    Lemma \ref{triv}.}
\end{remark}
\begin{proof}
  We choose arbitrarily $a \in (1,2]$ and an $\ell\ge \ell_0$ as in
  Lemma \ref{main} but also such that $a^\ell > x $.  We note that it is
  sufficient to prove the statement for
  $n$ of the form $[a^u]$.  We define stopping times $\beta_i$,
  $i\in\IN$,  by
  \begin{align*}
    \beta _1&= \inf \{k> 0\,|\, V_k\ge a^{\ell+1}\}; \\
    \beta _{i+1}& = \inf \{k > \beta_i\,: \,V_k \ge a^{\ell+1} \mbox{
      and } \exists
  s\in(\beta_i,k)\,|\, V_s \le a^\ell\}.
  \end{align*}
Lemma \ref{overshoot} and the monotonicity of $V$ with respect to its
starting point imply that 
\begin{multline*}
  P^V_x\left(\exists r \in [\beta_i, \beta_{i+1})\,|\, V_r \ge a^u
    \,|\,\beta_i < \sigma_0\right) \\
  \le \frac{h_a^+(\ell+1)-1}{h_a^+(u)-1} + \sum_{k=\ell+1} ^\infty
    P^V_x (V_{\beta_i} \ge a^k + a^{2k/3}\,|\,\beta_i < \sigma_0)\,
    \frac{h_a^+(k+1)-1}{h_a^+(u)-1}  \\=
  \frac{h_a^+(\ell+1)-1}{h_a^+(u)-1} \left(1 + \sum_{k=\ell+1} ^\infty
    P^V_x (V_{\beta_i} \ge a^k + a^{2k/3}\,|\,\beta_i < \sigma_0)\,
    \frac{h_a^+(k+1)-1}{h_a^+(\ell+1)-1} \right)\\\le
  \frac{2(h_a^+(\ell+1)-1)}{h_a^+(u)-1}
\end{multline*}
supposing, as we may, that $\ell$ was fixed sufficiently large.  Thus, 
\[
a^{u \delta }P^V_x(\sigma_0 > \tau_{a^u}) \le 2a^{u \delta }\,
\frac{(h_a^+(\ell+1)-1 )}{h_a^+(u)-1}\sum_{i=1}^\infty P^V_x(\beta_i <
\sigma_0) .
\]
The bound (\ref{bt1}) now follows from noting that $P^V_x(\beta_i < \sigma_0)$
decays
geometrically fast to zero (with a rate which may depend on $\ell$ but
does not depend on $u$) and that $a^{u \delta }(h_a^+(u)-1)^{-1}$ is
bounded in $u$ (see Remark~\ref{rml}).

To prove (\ref{bt2}) we notice that by Lemma~\ref{main} and (\ref{bt1}) for all
$n>a^\ell$
\begin{align*}
P^V_n(\sigma_0 > \tau_{c_3n})&\le P^V_n(\sigma_0 >
\tau_{c_3n},\,\sigma_{a^\ell}>\tau_{c_3 n})+ P^V_n(\sigma_0 >
\tau_{c_3n},\,\sigma_{a^\ell}<\tau_{c_3 n})\\&\le
P^V_n(\sigma_{a^\ell}>\tau_{c_3
n})+\frac{C_6(a^\ell)}{(c_3n)^{\delta}}=O(c_3^{-\delta}).
\end{align*}
The constant $c_3$ can be chosen large enough to get (\ref{bt2}) for all
$n\in\IN$. 
\end{proof}


\section{Proof of (A)}\label{px}

\begin{proposition}\label{reentry}
 There is a constant $c_4>0$ such that for all $k,\,x\in\IN$ and $y\ge 0$ 
\begin{equation}
  \label{ind}
 P^V_y \left( \sum_{r=1}^{\sigma_0}\I_{\{V_r \in [x,2x)\}} >
  2xk\right) \le P^V_y(\rho_0<\sigma_0)(1- c_4)^k. 
\end{equation}
where $\rho_0=\inf\{j\ge 0\,|\, V_j\in[x,2x)\}$.
\end{proposition}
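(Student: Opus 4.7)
The plan is a block-iteration argument via the strong Markov property. For $k\ge 1$ set
\[
S_k=\inf\bigl\{j\ge 1\,:\,\bigl|\{1\le r\le j\,:\,V_r\in[x,2x)\}\bigr|=2xk\bigr\},
\]
so that $V_{S_k}\in[x,2x)$ on $\{S_k<\infty\}$ and $\bigl\{\sum_{r=1}^{\sigma_0}\I_{\{V_r\in[x,2x)\}}>2xk\bigr\}\subset\{S_k\le\sigma_0\}$. Applying strong Markov at $\rho_0$ and then iteratively at $S_1,\ldots,S_{k-1}$ gives
\[
P^V_y\!\left(\sum_{r=1}^{\sigma_0}\I_{\{V_r\in[x,2x)\}}>2xk\right)\;\le\;P^V_y(\rho_0<\sigma_0)\Bigl(\sup_{z\in[x,2x)\cap\IN}P^V_z(S_1\le\sigma_0)\Bigr)^{k},
\]
reducing the proposition to producing a $c_4>0$, independent of $x$, with $\sup_z P^V_z(S_1\le\sigma_0)\le 1-c_4$.

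To exhibit such a $c_4$ I would construct a single positive-probability event forcing $S_1>\sigma_0$. Set $A=A_1\cap A_2$, where $A_1=\{\sigma_{x/2}^V\le 2x\}$ and $A_2=\{\sigma_0<\tau_x\text{ for the process restarted at }V_{\sigma_{x/2}^V}\}$. On $A$, any visit to $[x,2x)$ occurs during $[0,\sigma_{x/2}^V]$, and the step at $r=\sigma_{x/2}^V$ itself lies in $[0,x/2]$; thus the $[x,2x)$-occupation is strictly less than $\sigma_{x/2}^V\le 2x$, which forces $S_1>\sigma_0$, so $A\subset\{S_1>\sigma_0\}$. A uniform bound $P^V_z(A_1)\ge c_1$ for $z\in[x,2x)$ follows from Lemma~\ref{weakcon} applied with $n=2x$, $y=1$, $\epsilon=1/4$ (so $\sigma_{\epsilon n}^V=\sigma_{x/2}^V$), together with monotonicity of $\sigma_{x/2}^V$ in the initial value (clear from the coupling in \eqref{defV}):
\[
P^V_z(\sigma_{x/2}^V\le 2x)\;\ge\;P^V_{2x}(\sigma_{x/2}^V\le 2x)\;\longrightarrow\;P^{Y^1}(\tau^Y_{1/4}\le 1)\;>\;0
\]
as $x\to\infty$, the positivity being a consequence of $\tau^Y_0<\infty$ $P^{Y^1}$-a.s. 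For $A_2$, the undershoot bound \eqref{under} places $V_{\sigma_{x/2}^V}$ in $[x/4,x/2]$ with probability approaching $1$, and from any such $w$ Lemma~\ref{main} (with $a$ chosen close to $1$ so that the starting-value condition $|w-a^m|\le a^{2m/3}$ holds for an appropriate $m$) gives $P^V_w(\tau_x<\sigma_{a^{\ell_0}})\le (1/2)^\delta+o(1)$; a short strong-Markov descent from $a^{\ell_0}$ down to $0$, using the non-degeneracy assumption (A2), then yields $P^V_w(A_2)\ge c_2>0$. The finitely many small values of $x$ for which the asymptotic arguments do not apply are treated directly via (A2).

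The principal technical difficulty is the dovetailing of the diffusion-limit estimate for $A_1$ with the discrete level-crossing estimate for $A_2$: the scaling parameters $n$ and $\epsilon$ in Lemma~\ref{weakcon} must be chosen so that the entrance time matches $\sigma_{x/2}^V$ exactly (as above), and Lemma~\ref{main} controls excursions only down to scale $a^{\ell_0}$, so a short (A2)-based argument is needed for the final descent to $0$. Once these two points are settled, the strong-Markov block iteration in the first paragraph completes the proof.
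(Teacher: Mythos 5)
Your proof is correct and takes essentially the same approach as the paper: a strong-Markov block decomposition together with a uniform escape bound obtained by combining a diffusive descent below $x/2$ (Lemma~\ref{weakcon}) with an absorption-before-$\tau_x$ estimate (Lemma~\ref{main} plus a short argument near $0$). The paper packages the escape event a bit more economically---it spaces re-entry times $\rho_j$ by $2x$ instead of tracking the occupation-count times $S_k$, bounds $V_{\sigma^V_{x/2}}$ by monotonicity alone (making your detour through \eqref{under} unnecessary), and handles the descent from a fixed scale $x_0$ to $0$ via Corollary~\ref{boundtau} rather than a direct (A2) argument---but these are inessential variations.
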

\begin{proof}
First, observe that there is a constant $c>0$ such that for all
$x\in\IN$
\begin{equation*}
  (i) \quad P^V_{2x}(
  \sigma_{x/2} <  x) > c; 
\qquad(ii)\quad P^V_{x/2}( \sigma_0 < \tau_x) > c.
\end{equation*}
The inequality $(i)$ is an immediate consequence of
Lemma~\ref{weakcon}. To prove the second inequality, we fix
$x_0\in\IN$ and let $x>2x_0+1$. Then by Corollary~\ref{boundtau}
\begin{align*}
  P^V_{x/2}( \sigma_0 < \tau_x)&= P^V_{x/2}( \sigma_0 < \tau_x\,|\,
  \sigma_{x_0}<\tau_x)P^V_{x/2}( \sigma_{x_0} < \tau_x)\\&\ge
  (1-C_6(x_0)x^{-\delta}) P^V_{x/2}( \sigma_{x_0} < \tau_x).
\end{align*}
Choosing $x_0$ large enough
and applying Lemma~\ref{main} to the last term in the right-hand side
we obtain $(ii)$ for all sufficiently large $x$. Adjusting the constant
$c$ if necessary we can extend $(ii)$ to all $x\in\IN$.

Next, we show that $(i)$ and $(ii)$ imply (\ref{ind}) with $c_4=c^2$. 
Denote by $\rho_0\ge 0$ the first entrance time of $V$ in $[x,2x)$ and
set \[\rho_j=\inf\{r\ge \rho_{j-1}+2x\,|\, V_r\in [x,2x)\},\quad
j\ge 1.\] Notice that for each $j\ge 1$, the time spent by $V$
in $[x,2x)$ during the time interval $[\rho_{j-1},\rho_j)$ 
is at most $2x$.  If $V$ spends more than $2xk$ units of time in
$[x,2x)$ before time $\sigma_0$ then $\rho_k<\sigma^V_0$. Thus,
\begin{multline}\label{st}
  P^V_y\left( \sum_{r=1}^{\sigma_0}\I_{\{V_r \in [x,2x)\}} >
  2xk\right) \le P^V_y(\rho_k<\sigma_0)
    \\= P^V_y(\rho_k<\sigma_0\,|
  \,\,\rho_{k-1}<\sigma_0)P^V_y(\rho_{k-1}<\sigma_0).
\end{multline}
Using the strong Markov property, monotonicity with respect to the
starting point, and inequalities $(i)$ and $(ii)$ we get
\begin{align*}
  P^V_y&(\rho_k<\sigma_0\,| \,\,\rho_{k-1}<\sigma_0)\le \max_{x\le
    z<2x} P^V_z(\rho_1<\sigma_0)\\&\le \max_{x\le z<2x}
  \left(P^V_z(\rho_1<\sigma_0,\,
    \sigma_{x/2}<x)+P^V_z(\rho_1<\sigma_0,\, \sigma_{x/2}\ge
 x)\right)\\&\le \max_{x\le
    z<2x}\left(P^V_z(\rho_1<\sigma_0\,|\,
    \sigma_{x/2}<x)P^V_z(\sigma_{x/2}<x)+1-P^V_z(\sigma_{x/2}<
 x)\right) \\&\le \max_{x\le z<2x}
  \left(1-P^V_z(\sigma_{x/2}<x)(1-P^V_z(\rho_1<\sigma_0\,|\,
    \sigma_{x/2}<x))\right)
 \\&\le 1-P^V_{2x}(\sigma_{x/2}<x)(1-P^V_{x/2}(\rho_0<\sigma_0))
 \\&\le 1-P^V_{2x}(\sigma_{x/2}<x)P^V_{x/2}(\tau_x>\sigma_0)
    \le 1-c^2.
\end{align*}
Substituting this in (\ref{st}) and iterating in $k$ gives (\ref{ind}).
\end{proof}

\begin{proposition} \label{NEW} For every $h>0$
\[
\lim_{\epsilon\to 0}\limsup_{n \to \infty} n ^\delta P^V_0 \left( \sum
_{i=1}^{\sigma_0}
\I_{\{V_i < \epsilon n\}} > n h\right)=0.
\]
\end{proposition}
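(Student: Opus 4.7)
My strategy is a dyadic decomposition according to the value of $V_i$, combined with the geometric tail bound of Proposition \ref{reentry} and the hitting-probability estimate (\ref{bt1}) from Corollary \ref{boundtau}. Let $K := \lceil \log_2(\epsilon n)\rceil$, so that the intervals $[2^k, 2^{k+1})$ for $k = 0, 1, \dots, K-1$ cover $\{1, \dots, \lceil \epsilon n \rceil - 1\}$, and put $T_k := \sum_{i=1}^{\sigma_0} \I_{\{V_i \in [2^k, 2^{k+1})\}}$, so that $\sum_{i=1}^{\sigma_0} \I_{\{V_i < \epsilon n\}} \le 1 + \sum_{k=0}^{K-1} T_k$. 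Applying Proposition \ref{reentry} with $x = 2^k$ and using (\ref{bt1}) yields, for every positive integer $m$,
\[
P_0^V(T_k > 2^{k+1} m) \le P_0^V(\tau_{2^k} < \sigma_0)(1-c_4)^m \le C_6\, 2^{-k\delta}(1-c_4)^m.
\]

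The heart of the argument is a scale-dependent choice of the repetition count $m_k$ so that (i) the union-bound budget $\sum_k 2^{k+1} m_k$ stays below $nh$, and (ii) the per-scale bounds, multiplied by $n^\delta$, sum to something vanishing as $\epsilon \to 0$. Set $c_4' := -\log(1-c_4)$, $\gamma := (\delta+1)(\log 2)/c_4'$, and take $m_k := \lceil \alpha + (K-1-k)\gamma \rceil$, with $\alpha>0$ maximal subject to the budget. This choice is engineered so that
\[
2^{-k\delta}(1-c_4)^{m_k} \le (1-c_4)^\alpha \cdot 2^{-(K-1)(\delta+1)+k},
\]
a geometric progression in $k$ peaking at the top scale $k = K-1$. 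Summing and using $2^{K-1} \ge \epsilon n/2$,
\[
n^\delta \sum_{k=0}^{K-1} C_6\, 2^{-k\delta}(1-c_4)^{m_k} \le C'\, n^\delta\, 2^{-(K-1)\delta}(1-c_4)^\alpha \le C''\, \epsilon^{-\delta}(1-c_4)^\alpha.
\]

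To close the argument I verify that the budget does allow $\alpha$ of order $1/\epsilon$. Using $\sum_{k=0}^{K-1} 2^k \le 2^K$, $\sum_{k=0}^{K-1} (K-1-k)\, 2^k \le 2^K$, and $2^K \le 2\epsilon n$, one obtains $\sum_{k=0}^{K-1} 2^{k+1} m_k \le 4\epsilon n (\alpha + \gamma)$; keeping this below $nh$ permits $\alpha = \Theta(1/\epsilon)$ for small $\epsilon$, whence $(1-c_4)^\alpha \le e^{-c h/\epsilon}$ for some $c>0$. A union bound then gives
\[
\limsup_{n\to\infty} n^\delta P_0^V\Bigl(\sum_{i=1}^{\sigma_0}\I_{\{V_i<\epsilon n\}} > nh\Bigr) \le C''\, \epsilon^{-\delta}\, e^{-c h/\epsilon} \xrightarrow[\epsilon \to 0]{} 0.
\]
The main difficulty is crafting a single allocation $m_k$ that simultaneously defeats the $n^\delta$ factor at every dyadic scale and respects the modest total budget $nh$. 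The remedy is to lavish more repetitions on the small scales (where the hitting probability $2^{-k\delta}$ is close to $1$) than on the top; the arithmetic schedule $m_k = \alpha + (K-1-k)\gamma$ equalizes the per-scale contributions into a geometric series dominated by $k=K-1$, whose budget $\sim 2^K \alpha \sim \epsilon n \cdot \alpha$ permits $\alpha \sim 1/\epsilon$, and the resulting exponential-in-$1/\epsilon$ decay easily crushes the polynomial $\epsilon^{-\delta}$.
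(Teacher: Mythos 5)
Your proof is correct and follows essentially the same route as the paper's: a dyadic decomposition of $\{V_i < \epsilon n\}$, the geometric reentry tail from Proposition~\ref{reentry}, the hitting estimate (\ref{bt1}), and a weighted union bound over scales. The only difference is cosmetic: the paper allots a budget $\sim hn/(i(i+1))$ to the $i$-th scale from the top (yielding a convergent series by dominated convergence), while you use an arithmetic repetition schedule $m_k = \lceil\alpha + (K-1-k)\gamma\rceil$ tuned to collapse the per-scale bounds into a geometric series and obtain the explicit rate $\epsilon^{-\delta}e^{-ch/\epsilon}$.
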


\begin{proof}
Fix $\epsilon\in(0,1)$ and let $k=k(n,\epsilon)$ be
  the smallest integer such that $2^k \ge \epsilon n$. Define
  intervals $I_i = [2^{k-i}, 2^{k-i+1})$, $i\in\IN$, and events
\[A_i=\left\{
\sum_{r=1}^{\sigma_0} \I_{\{V_r \in I_i\}} > \frac{h2^{i-1}}
{\epsilon i(i+1)}\,|I_i|\right\}.
\] 
Intervals $I_i$ and events $A_i$ depend on $n$ but this is not reflected
in our notation.  Since $2^{k-1}<\epsilon n$ and
$\sum_{i=1}^k(i(i+1))^{-1}< 1$, we have
\[\left\{ \sum _{i=1}^{\sigma_0} \I_{\{V_i < \epsilon n\}} > n h\right\}\subset
\bigcup_{i=1}^kA_i,\] and, therefore,
\[
P_0^V\left(\sum _{i=1}^{\sigma_0} \I_{\{V_i < \epsilon n\}} > n h\right)  \le  
\sum_{i= 1}^k P(A_i).
\]
Using (\ref{bt1}) and (\ref{ind}) we get
\begin{align*}
  n^\delta P_0^V\left(\sum _{i=1}^{\sigma_0} \I_{\{V_i < \epsilon n\}}
 > n h\right)&\le 
  n^\delta\sum_{i=1}^k \left(1-c_4\right) 
^{\left[\frac{h2^{i-2}}{\epsilon i(i+1)}
    \right]}P^V_0(\tau_{2^{k-i}}<\sigma_0)
\\&\le  C_6(0)\,\epsilon^{-\delta} 
  \sum_{i\ge 1} \left(1-c_4\right) ^{\left[\frac{h2^{i-2}}{\epsilon i(i+1)} 
    \right]}2^{i \delta},
\end{align*}
and this quantity vanishes as $\epsilon\to 0 $.
\end{proof}

\begin{proof}[Proof of (A)]
To obtain (A) we apply (\ref{bt1}) and Proposition~\ref{NEW} to the right-hand
side of the following inequality:
\begin{align*}
n^\delta P^V_0 (\sigma_0 >n) &\leq n^\delta P^V_0 (\tau_{\epsilon n} < \sigma
_0) +n^\delta P^V_0 (\sigma_0 >n, \tau_{\epsilon n} > \sigma _0 )\\&\le
\epsilon^{-\delta}(\epsilon n)^{\delta}P^V_0 (\tau_{\epsilon n} < \sigma
_0)+n^{\delta}P^V_0\left( \sum _{i=1}^{\sigma_0}
\I_{\{V_i < \epsilon n\}} > n\right). \qedhere
\end{align*}
\end{proof}


\section{Proof of  (B)}

We shall need the following fact.
\begin{proposition}\label{T}
 For each $\epsilon> 0$, there is a constant $C_7=C_7(\epsilon)>0$ such 
that 
  \[
  P^V_n(\sigma_0 > C_7n) < \epsilon\quad\text{for all $n\in\IN$}.
  \]
\end{proposition}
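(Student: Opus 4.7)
The plan is to prove the stronger statement $\IE^V_n[\sigma_0]\le Cn$ with a constant $C$ independent of $n$; Markov's inequality then gives $P^V_n(\sigma_0>C_7 n)\le C/C_7<\epsilon$ for $C_7>C/\epsilon$.

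To estimate $\IE^V_n[\sigma_0]$, I decompose the occupation time of $V$ by dyadic levels. Set $I_j=[2^j,2^{j+1})$ and $S_j=\sum_{r=0}^{\sigma_0-1}\I_{\{V_r\in I_j\}}$ for $j\ge 0$, so that $\sigma_0=\sum_{j\ge 0}S_j$. Applying Proposition~\ref{reentry} with $x=2^j$ and $y=n$, and integrating the resulting geometric tail, gives
\[
\IE^V_n[S_j]\le \frac{2^{j+1}}{c_4}\,P^V_n(\rho^{(j)}_0<\sigma_0),
\]
where $\rho^{(j)}_0$ is the first entrance time of $V$ into $I_j$. For $2^j\le n$ I use $P^V_n(\rho^{(j)}_0<\sigma_0)\le 1$; the contribution to $\IE^V_n[\sigma_0]$ is then bounded by $\sum_{j:2^j\le n}2^{j+1}/c_4=O(n)$. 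For $2^j>n$, entering $I_j$ requires first reaching $2^j$, so $P^V_n(\rho^{(j)}_0<\sigma_0)\le P^V_n(\tau_{2^j}<\sigma_0)$, and I shall argue the uniform hitting bound
\[
P^V_n(\tau_{2^j}<\sigma_0)\le K_1\Bigl(\frac{n}{2^j}\Bigr)^{\!\delta},\qquad n\ge 1,\ 2^j>n,
\]
with $K_1$ independent of $n$ and $j$. Granting this, $\IE^V_n[S_j]=O(n^\delta 2^{j(1-\delta)})$, and summation over $j>\log_2 n$ gives $O(n)$ since $\delta>1$. Adding the two ranges yields $\IE^V_n[\sigma_0]=O(n)$.

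The main technical point is the uniform hitting bound above. The difficulty is that Lemma~\ref{main} requires the starting point to satisfy $|x-a^m|\le a^{2m/3}$, a condition that fails for generic $n$. My fix is to couple $V$ (started at $n$) with a dominating process $\widetilde V$ started at $x_{m'}:=[2^{m'}+2^{2m'/3}]$, where $m'$ is the smallest integer with $x_{m'}\ge n$ (so $m'=O(\log n)$ and $x_{m'}=O(n)$). Using the additive structure of~(\ref{defV}), I share the $\zeta^{(k)}$-variables between the two processes and use independent $\mathrm{Geom}(1/2)$ increments for the surplus particles of $\widetilde V$; this guarantees $\widetilde V_k\ge V_k$ throughout the stretch in which $V_k\ge M$. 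Split $\{\tau^V_{2^j}<\sigma^V_0\}$ according to whether $V$ first drops below $M$ or not: on $\{\tau^V_{2^j}<\sigma^V_M\}$ the coupling yields $P^V_n(\tau^V_{2^j}<\sigma^V_M)\le P^{\widetilde V}_{x_{m'}}(\tau^{\widetilde V}_{2^j}<\sigma^{\widetilde V}_0)$, which is estimated by applying Lemma~\ref{main} to $\widetilde V$ (legitimate now, since $\widetilde V_0=x_{m'}$), Remark~\ref{rml}, and Corollary~\ref{boundtau} applied from the fixed level $2^{\ell_0}$ to control the contribution from paths that first descend to $[0,2^{\ell_0}]$; on the complementary event the strong Markov property at $\sigma^V_M$ reduces matters to $\max_{v<M}P^V_v(\tau^V_{2^j}<\sigma_0)\le C_6(M-1)/2^{j\delta}\le C_6(M-1)(n/2^j)^\delta$ (using $n\ge 1$). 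Both pieces give $O((n/2^j)^\delta)$, as required.

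The main obstacle is thus this uniform hitting bound: Lemma~\ref{main} is not applicable off-the-shelf at a generic starting point, and one has to reproduce the comparison/supermartingale argument from its proof with the appropriate modifications via the above coupling. Once this is in hand, the dyadic summation and Markov's inequality finish the proof routinely.
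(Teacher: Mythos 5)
Your proof is correct, and it takes a route that is genuinely different from the paper's, though the two share the key ingredients. You prove the stronger statement $\IE^V_n[\sigma_0]\le Cn$ uniformly in $n$ by a full dyadic decomposition of the occupation time over all levels, controlling high levels ($2^j>n$) with a uniform hitting estimate $P^V_n(\tau_{2^j}<\sigma_0)\le K(n/2^j)^\delta$. The paper instead peels off the event $\{\sigma_0>\tau_{c_3 n}\}$ using Corollary~\ref{boundtau}~(\ref{bt2}), and then applies Markov's inequality only to the occupation time of $(0,c_3 n)$, which Proposition~\ref{reentry} controls with no additional hitting estimate needed. The two proofs are closely related because the uniform hitting bound you establish is precisely what the proof of (\ref{bt2}) shows: there the authors write $P^V_n(\sigma_0>\tau_{c_3n})\le P^V_n(\sigma_{a^\ell}>\tau_{c_3n})+C_6(a^\ell)/(c_3 n)^\delta=O(c_3^{-\delta})$, using Lemma~\ref{main} (via monotonicity to place the starting point on the dyadic grid) together with Corollary~\ref{boundtau} from the fixed level $a^\ell$. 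Replacing $c_3 n$ by a general $N\ge n$ and using $n\ge 1$ gives your bound directly, so the elaborate coupling argument you describe is a re-derivation of a step the paper has already carried out. What your route buys is the stronger uniform first-moment bound $\IE^V_n[\sigma_0]=O(n)$, which is of independent interest and is the discrete counterpart of $E^Y_y[\tau_0^Y]=O(y)$ for the diffusion.

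Two small imprecisions, neither of which affects correctness. First, the caveat that your coupling guarantees $\widetilde V_k\ge V_k$ only ``throughout the stretch in which $V_k\ge M$'' is misleading: the branching map in (\ref{defV}) is monotone in the current population size for \emph{every} value of $V_k$, including $V_k<M$ (both $\zeta$'s and $\xi$'s are non-negative, and for $V_k<M$ the $\xi$-sum is empty), so sharing the coin tosses of $B^{(k)}$ gives $\widetilde V_k\ge V_k$ for all $k$ up to absorption; the paper uses this ``monotonicity in the initial number of particles'' freely. Second, as a consequence, the split at $\sigma^V_M$ is unnecessary: once you have $\widetilde V\ge V$ globally you can go straight to $P^V_n(\tau_N<\sigma_0)\le P^V_{x_{m'}}(\tau_N<\sigma_0)$, and then split by whether $\sigma_{2^{\ell_0}}$ occurs before $\tau_N$, exactly as in the proof of (\ref{bt2}).
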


\begin{proof}
We have
\[P^V_n(\sigma_0 > C_7n)\le P^V_n(\sigma_0 >
\tau_{c_3n})+P^V_n(\sigma_0 > C_7n,\,\sigma_0 <\tau_{c_3n}).\] Using
(\ref{bt2}) we can choose $c_3>1$ so that $P^V_n (\sigma_0 > \tau_{c_3n}
) <\epsilon / 2$ for all $n\in\IN$.  Thus, we only need to estimate
the last term. Notice that it is bounded above by the probability that
the occupation time of the interval $(0,c_3n)$ up to the moment
$\sigma_0$ exceeds $C_7n$. The latter can be estimated by Markov
inequality:
\[P^V_n\left( \sum_{r=1}^{\sigma_0} \I_{\{V_r < c_3n\}}>C_7n
\right)\le (C_7n)^{-1}E^V_n\left( \sum_{r=1}^{\sigma_0} \I_{\{V_r <
    c_3n\}} \right).\] We claim that the last expectation does not
exceed $4nc_3/c_4$ and so we can take $C_7>8c_3/(\epsilon c_4)$.
Indeed, let $m$ be the smallest positive integer such that $2^m\ge
c_3n$. Then writing the expectation of our non-negative integer-valued
random variable as the sum of the probabilities of its tails and using
(\ref{ind}) to estimate the tails we get
\begin{equation*}
 E^V_n\left( \sum_{r=1}^{\sigma^V_0} \I_{\{V_r < c_3n\}}  \right) \le
 \sum_{j=1}^mE^V_n\left( \sum_{r=1}^{\sigma^V_0}
 \I_{\{V_r\in[2^{j-1},2^j)\}}  \right)
\le \sum_{j=1}^m\frac{2^j}{c_4}
\le \frac{2^{m+1}}{c_4}\le\frac{4nc_3}{c_4}. \qedhere
\end{equation*}
\end{proof}

\begin{proof}[Proof of  (B)] For every $\alpha\in(0,\epsilon)$ and
  $\beta\in(0,1)$ we have
\begin{multline}\label{Bee}
  P^V_{\epsilon n}\left(\sum_{j=0}^{\sigma_{\alpha n}-1}
    V_j>n^2\right)\le
  P^V_{\epsilon n}\left(\sum_{j=0}^{\sigma_0-1} V_j>n^2\right)\\
  \le P^V_{\epsilon n}\left(\sum_{j=0}^{\sigma_{\alpha n}-1}
    V_j>(1-\beta)n^2\right)+P^V_{\epsilon
    n}\left(\sum_{j=\sigma_{\alpha n}}^{\sigma_0-1} V_j>\beta
    n^2\right).
\end{multline}
By Lemma~\ref{weakcon} for every $R>0$ 
\begin{equation}\label{Be}
  \lim_{n\to\infty} P^V_{\epsilon
 n}\left(\sum_{j=0}^{\sigma_{\alpha n}-1} V_j>Rn^2\right)=
 P^Y_\epsilon\left (\int_0^{\sigma_\alpha} 
Y_s\,ds>R\right),
\end{equation}
since, as is easily verified, under law $ P^Y_\epsilon $ the law of $
\int_0^{\sigma_\alpha} Y_s\,ds $ has no atoms.  Next, we notice that
for all $x,\,\beta>0$
\begin{multline}\label{Bef}
 P^Y_\epsilon\left
 (\int_0^{\sigma_0} Y_s\,ds>(1+\beta)x\right) \ - \ P^Y_\epsilon\left
 (\int_{\sigma_\alpha}^{\sigma_0} Y_s\,ds>\beta x\right)\\\le
P^Y_\epsilon\left
 (\int_0^{\sigma_\alpha} Y_s\,ds>x\right)\le 
P^Y_\epsilon\left
 (\int_0^{\sigma_0} Y_s\,ds>x\right).
\end{multline}
By the strong Markov property and scaling, 
\begin{align}\label{Beg}\nonumber
 P^Y_\epsilon\left
 (\int_{\sigma_\alpha}^{\sigma_0} Y_s\,ds> \beta x\right)&=P^Y_\alpha
\left
 (\int_0^{\sigma_0} Y_s\,ds>\beta x\right)\\&=
P^Y_1\left
 (\int_0^{\sigma_0} Y_s\,ds> \beta x\alpha^{-2}\right)\to
 0\ \ \text{as } \alpha\to 0.
\end{align}
Letting $n\to\infty$, then $\alpha\to 0$, and finally $\beta\to 0$ we
obtain from (\ref{Bee})--(\ref{Beg})
that \[\liminf_{n\to\infty}P^V_{\epsilon
  n}\left(\sum_{j=0}^{\sigma_0-1} V_j>n^2\right)\ge P^Y_1\left
  (\int_0^{\sigma_0} Y_s\,ds>\epsilon^{-2}\right).\] 
To get the matching upper bound it is enough to show that for every $\nu>0$ and
$\beta\in(0,1)$ there is an $\alpha\in(0,\epsilon)$ such that for all
sufficiently large $n$
\begin{equation}
\label{Beh}
 P^V_{\epsilon n}
\left(\sum_{j=\sigma_{\alpha n}}^{\sigma_0-1} V_j>\beta n^2\right)
<2\nu.
\end{equation}
The left-hand side of (\ref{Beh}) does not exceed
\[P^V_{\alpha n} (\tau_{c_3n\alpha}<\sigma_0)+P^V_{\alpha n}
\left(\sum_{j=0}^{\sigma_0-1} V_j>\beta
  n^2,\,\tau_{c_3n\alpha}>\sigma_0\right).\] Given $\nu$, define
$c_3(\nu)$ and $C_7(\nu)$ as in (\ref{bt2}) and Proposition~\ref{T}
respectively. Let $\alpha<\sqrt{\beta/(c_3 C_7)}$.  By (\ref{bt2}) the
first term above is less than $\nu$ for all $n>1/\alpha$. On the set
$\{\tau_{c_3n\alpha}>\sigma_0\}$ the process $V$ is below $c_3n\alpha$
and, thus, the second term is bounded above by $P^V_{\alpha
  n}(\sigma_0>(\beta n)/(c_3 \alpha))$. By Proposition~\ref{T} and our
choice of $\alpha$ the latter probability does not exceed $\nu$.
Using relations (\ref{Bee})--(\ref{Beg}) and, again, the absence of
atoms for the distribution of $\int_0^{\sigma_x} Y_s\,ds$ under
$P^Y_1$ for each $x\ge 0$, we get the desired upper bound.
\end{proof}


\section{Proofs of  (C) and Theorem \ref{X}}
First we prove (C) of Lemma~\ref{4.1}. Then using the approach of
Lemma~\ref{4.1} we show the convergence claimed in Theorem \ref{X}.

The next lemma includes (C) as a special case ($k=0$, $C_5=f(0)$). 
\begin{lemma}\label{triv}
There is a function $f:\,\mathbb{N}\cup\{0\}\to(0,\infty)$,
    such that \[\lim_{n\to\infty} n^\delta
    P_k^V(\sigma_0>\tau_n)=f(k)\quad \text{for each integer } k\ge 0.\]  
\end{lemma}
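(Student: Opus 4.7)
The plan is to show that $p_k(n) := n^\delta P_k^V(\sigma_0 > \tau_n)$ converges as $n \to \infty$, with limit the desired $f(k)$. Corollary~\ref{boundtau} already provides $p_k(n) \le C_6(k)$, while the lower bound in Lemma~\ref{main}, combined with the positivity of $P_k^V(\tau_{a^m} < \sigma_0)$ for each $m$, will deliver $\liminf_n p_k(n) > 0$. What remains is to establish convergence.

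I first establish convergence along the geometric subsequence $n = a^u$ for an arbitrary fixed $a \in (1,2]$ by a telescoping argument. The strong Markov property at $\tau_{a^u}$ gives
\[
\frac{p_k(a^{u+1})}{p_k(a^u)} \;=\; a^\delta\, E_k^V\!\left[P_{V_{\tau_{a^u}}}^V(\sigma_0 > \tau_{a^{u+1}}) \,\Big|\, \sigma_0 > \tau_{a^u}\right].
\]
Fix $\ell \ge \ell_0$ (as in Lemma~\ref{main}) independent of $u$ and split the conditional expectation on the size of the overshoot $V_{\tau_{a^u}} - a^u$. On the typical event $\{V_{\tau_{a^u}} - a^u \le a^{2u/3}\}$, Lemma~\ref{main} applied with $m = u$ and upper endpoint $u+1$ sandwiches $P_{V_{\tau_{a^u}}}^V(\sigma_{a^\ell} > \tau_{a^{u+1}})$ between $(h_a^\pm(u)-1)/(h_a^\pm(u+1)-1)$; the replacement of $\sigma_{a^\ell}$ by $\sigma_0$ introduces an $O(a^{-(u+1)\delta})$ error, obtained via the strong Markov property at $\sigma_{a^\ell}$ and (\ref{bt1}). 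The complementary large-overshoot event has conditional probability $O(\exp(-c\,a^{u/3}))$ by Lemma~\ref{overshoot}. Since $a^\delta\, h_a^\pm(u)/h_a^\pm(u+1) = a^\delta/(a^\delta \mp a^{-\lambda(u+1)}) = 1 + O(a^{-\lambda u})$, the pieces combine to give
\[
\left|\frac{p_k(a^{u+1})}{p_k(a^u)} - 1\right| \;=\; O(a^{-\lambda u}),
\]
which is summable in $u$. Hence $\log p_k(a^u)$ is Cauchy and $p_k(a^u) \to f_a(k) \in (0,\infty)$.

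To extend to arbitrary $n$ and to confirm that the limit does not depend on $a$, I use monotonicity: for $n \in [a^u, a^{u+1})$ the inclusions $\{\sigma_0 > \tau_{a^{u+1}}\} \subseteq \{\sigma_0 > \tau_n\} \subseteq \{\sigma_0 > \tau_{a^u}\}$ yield $a^{-\delta} p_k(a^{u+1}) \le p_k(n) \le a^\delta p_k(a^u)$. Sending $n \to \infty$ gives $a^{-\delta} f_a(k) \le \liminf_n p_k(n) \le \limsup_n p_k(n) \le a^\delta f_a(k)$, and letting $a \searrow 1$ forces equality, identifies the common limit as $f(k)\in(0,\infty)$, and simultaneously shows $f_a(k) \to f(k)$.

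The main obstacle is the transfer from $P_y^V(\sigma_{a^\ell} > \tau_{a^{u+1}})$, which Lemma~\ref{main} controls directly, to $P_y^V(\sigma_0 > \tau_{a^{u+1}})$, which is what actually enters $p_k$: the extra contribution, coming from excursions that dip below $a^\ell$ without reaching $0$ and then climb back up to $a^{u+1}$, must not spoil the telescoping. By a final strong Markov at $\sigma_{a^\ell}$ together with monotonicity in the starting point, this contribution is uniformly bounded by $\max_{0 \le z \le a^\ell} P_z^V(\tau_{a^{u+1}} < \sigma_0) = O(a^{-(u+1)\delta})$, which is negligible compared with the dominant contribution $\sim a^{-\delta}$ from $h_a^\pm(u)/h_a^\pm(u+1)$ and hence does not affect the rate. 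Everything else reduces to the strong Markov property and to the product expansion of $h_a^\pm$.
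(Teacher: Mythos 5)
Your proposal is correct and follows essentially the same route as the paper: you telescope $p_k(a^u)$ via the strong Markov property at $\tau_{a^u}$, control each ratio using Lemma~\ref{main} (with the overshoot controlled by Lemma~\ref{overshoot} and the dip-below-$a^\ell$ contribution by Corollary~\ref{boundtau}), then interpolate to general $n$ and let $a\searrow 1$. The paper packages the summability of $\lvert a^\delta P_k^V(\sigma_0>\tau_{a^j}\mid\sigma_0>\tau_{a^{j-1}})-1\rvert$ as a separate Proposition~\ref{s}, but the argument is otherwise the same.
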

We shall need the following proposition.
\begin{proposition}
\label{s}
For each $a\in(1,2]$ and $k\ge 0$
  \begin{equation}
    \label{ser}
    \sum_{j=1}^\infty \Big|a^\delta
  P_k^V(\sigma_0>\tau_{a^j}\,|\, \sigma_0>\tau_{a^{j-1}})-1\Big|<\infty.
  \end{equation}
\end{proposition}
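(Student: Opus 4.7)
The plan is to estimate the ratio $r_j := P_k^V(\sigma_0>\tau_{a^j})/P_k^V(\sigma_0>\tau_{a^{j-1}})$ and show $|a^\delta r_j-1|$ is geometrically small in $j$. By the strong Markov property at $\tau_{a^{j-1}}$,
\[
 r_j \;=\; E_k^V\!\left[\,P_{V_{\tau_{a^{j-1}}}}^V(\tau_{a^j}<\sigma_0)\,\Big|\,\sigma_0>\tau_{a^{j-1}}\right].
\]
I would fix once and for all $\ell\ge\ell_0$ (as in Lemma~\ref{main}) with $a^\ell>k$ and restrict attention to $j>\ell+1$: the first $\ell+1$ terms contribute at most $(\ell+1)(a^\delta+1)$ to the series and can be discarded. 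The rest of the argument controls the inner integrand on the dominant part of the conditional law of $V_{\tau_{a^{j-1}}}$.

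Introduce the ``good overshoot'' event $G_j := \bigl\{V_{\tau_{a^{j-1}}}\le a^{j-1}+a^{2(j-1)/3}\bigr\}$. Lemma~\ref{overshoot}, applied with $x=a^{j-1}$ and $y=a^{2(j-1)/3}$ (so that $y^2/x=a^{(j-1)/3}$), yields
\[
 P_k^V(G_j^c\mid\sigma_0>\tau_{a^{j-1}}) \;\le\; c_1\bigl(e^{-c_2 a^{(j-1)/3}}+e^{-c_2 a^{2(j-1)/3}}\bigr),
\]
super-exponentially small, whose contribution to $r_j$ is therefore negligible. On $G_j$, Lemma~\ref{main} applies with $m=j-1$, $u=j$, $x=V_{\tau_{a^{j-1}}}$ and gives
\[
 \frac{h_a^-(j-1)-1}{h_a^-(j)-1}\;\le\; P_{V_{\tau_{a^{j-1}}}}^V(\sigma_{a^\ell}>\tau_{a^j})\;\le\; \frac{h_a^+(j-1)-1}{h_a^+(j)-1}.
\]
A short computation using the recursion $h_a^\pm(j)=h_a^\pm(j-1)(a^\delta\mp a^{-\lambda j})$ together with Remark~\ref{rml} shows $(h_a^\pm(j-1)-1)/(h_a^\pm(j)-1)=a^{-\delta}+O(a^{-\lambda j})$ uniformly.

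It remains to pass from $\sigma_{a^\ell}$ to $\sigma_0$. Applying the strong Markov property at $\sigma_{a^\ell}$ together with (\ref{bt1}) gives, for every $X\in G_j$,
\[
 0\;\le\; P_X^V(\sigma_0>\tau_{a^j})-P_X^V(\sigma_{a^\ell}>\tau_{a^j}) \;\le\; \Bigl(\max_{0\le y\le a^\ell}C_6(y)\Bigr) a^{-j\delta}.
\]
Since $\lambda$ in Lemma~\ref{main} may be taken $\le 1/8<1<\delta$, this discrepancy is strictly dominated by the $O(a^{-\lambda j})$ error above. Assembling the three estimates yields $r_j=a^{-\delta}+O(a^{-\lambda j})$, so $|a^\delta r_j-1|=O(a^{-\lambda j})$, which is geometrically summable in $j$.

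The main obstacle is the careful bookkeeping that combines three separate error contributions—(i) the atypical overshoot at $\tau_{a^{j-1}}$, (ii) the approximation intrinsic to Lemma~\ref{main}, which operates with the auxiliary threshold $\sigma_{a^\ell}$ rather than $\sigma_0$, and (iii) the excursions of $V$ below $a^\ell$ that may occur before it reaches $a^j$—and shows that each of them is uniformly dominated in $j$ by the geometric rate $a^{-\lambda j}$ coming from Lemma~\ref{main}. Once this domination is verified, the summability of the series is immediate.
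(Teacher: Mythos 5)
Your proof is correct and follows essentially the same route as the paper's: both control the overshoot at $\tau_{a^{j-1}}$ via Lemma~\ref{overshoot}, apply Lemma~\ref{main} with $m=j-1$, $u=j$ to get the main estimate $a^{-\delta}+O(a^{-\lambda j})$, and use (\ref{bt1}) to absorb the $O(a^{-j\delta})$ discrepancy between $\sigma_{a^\ell}$ and $\sigma_0$. The only cosmetic difference is that you condition on the value $V_{\tau_{a^{j-1}}}$ through the strong Markov property, whereas the paper uses monotonicity in the initial state to reduce to deterministic starting points $x_{j-1}\in[a^{j-1},a^{j-1}+a^{2(j-1)/3}]$; these are the same idea expressed two ways.
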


\begin{proof}
  By the monotonicity in the initial number of particles we get a
  lower bound: for all sufficiently large $j$ \[P_k^V(\sigma_0>\tau_{a^j}\,|\,
  \sigma_0>\tau_{a^{j-1}})\ge P_{a^{j-1}}^V(\sigma_0>\tau_{a^j}).\] For
  an upper bound we need to take into account the possibility of a
  large ``overshoot''. Let $x=a^{j-1}+a^{\frac{2(j-1)}{3}}$, then
  \begin{align*}
    P_k^V&(\sigma_0>\tau_{a^j}\,|\, \sigma_0>\tau_{a^{j-1}})\le
    P_x^V(\sigma_0>\tau_{a^j})\\
    &+P_k^V(\sigma_0>\tau_{a^j}\,|\, \sigma_0>\tau_{a^{j-1}},
    V_{\tau_{a^{j-1}}}>x)P_k^V(V_{\tau_{a^{j-1}}}>x\,|\,
    \sigma_0>\tau_{a^{j-1}})\\&\le
    P_x^V(\sigma_0>\tau_{a^j})+P_k^V(V_{\tau_{a^{j-1}}}>x\,|\,
    \sigma_0>\tau_{a^{j-1}}).
  \end{align*}
  The last probability decays faster than any power of $a^{-j}$ as
  $j\to\infty$ by Lemma~\ref{overshoot}. Therefore, it is enough to
  show the convergence of the series
  \begin{equation*}
    \sum_{j=1}^\infty \Big|a^\delta
  P_{x_{j-1}}^V(\sigma_0>\tau_{a^j})-1\Big|,
  \end{equation*}
where $x_j=a^j+\epsilon_j$
  and $0\le \epsilon_j\le a^{\frac{2j}{3}}$.  We have for all sufficiently large
$j$ (with $\ell$ chosen appropriately for $a$ as in Lemma \ref{main}),
  \begin{align*}
    \Big| P_{x_{j-1}}^V&(\sigma_0>\tau_{a^j})-a^{-\delta}\Big|\\&\le\Big|
    P_{x_{j-1}}^V(\sigma_{a^\ell}>\tau_{a^j})-a^{-\delta}\Big|+
P_{x_{j-1}}^V(\sigma_0>\tau_{a^j}>\sigma_{a^\ell})\\&\le\Big|
    P_{x_{j-1}}^V(\sigma_{a^\ell}>\tau_{a^j})-a^{-\delta}\Big|+
P_{x_{j-1}}^V(\sigma_0>\tau_{a^j}\,|\,\tau_{a^j}>\sigma_{a^\ell})\\&\le\Big|
    P_{x_{j-1}}^V(\sigma_{a^\ell}>\tau_{a^j})-a^{-\delta}\Big|+
    P_{a^\ell}^V(\sigma_0>\tau_{a^j}).
  \end{align*}
  By Lemma \ref{main} \[\sum_{j=1}^\infty\Big|
  P_{x_{j-1}}^V(\sigma_{a^\ell}>\tau_{a^j})-a^{-\delta}\Big|<\infty,\]
  and to complete the proof of (\ref{ser}) we invoke the bound 
provided by (\ref{bt1}) for $x = [a^\ell]$.
\end{proof}

\begin{proof}[Proof of Lemma~\ref{triv}]
  Fix an arbitrary non-negative integer $k$ and $a\in(1,2]$.  For each
  $n>a$ there is an $m\in\mathbb{N}$ such that $a^m\le n<a^{m+1}$. We
  have
  \[a^{m\delta}P_k^V(\sigma_0>\tau_{a^{m+1}})\le n^\delta
  P_k^V(\sigma_0>\tau_n)\le a^{(m+1)\delta}P_k^V(\sigma_0>\tau_{a^m}).\]
  If we can show that
  \begin{equation}
\label{gak}
\lim_{m\to\infty}
  a^{m\delta}P_k^V(\sigma_0>\tau_{a^m})=g(a,k)>0,
\end{equation}
for some $g(a,k)$, then 
\begin{align*}
 0<a^{-\delta}g(a,k)&\le
\liminf_{n\to\infty}n^\delta P_k^V(\sigma_0>\tau_n)\\&\le
\limsup_{n\to\infty} n^\delta P_k^V(\sigma_0>\tau_n)\le a^\delta
g(a,k).
\end{align*}
This implies
\[1\le\frac{\limsup_{n\to\infty} n^\delta P_k^V(\sigma_0>\tau_n)}
{\liminf_{n\to\infty}n^\delta P_k^V(\sigma_0>\tau_n)}\le a^{2\delta},\]
and we obtain the claimed result by letting $a$ go to $1$. 

To show (\ref{gak}) we set $\ell=\min\{j\in\IN\,|\, a^j>k\}$ and notice that for
$m>\ell$
  \begin{align*}
    a^{m\delta}P_k^V(\sigma_0>\tau_{a^m})&=a^\delta
    P_k^V(\sigma_0>\tau_{a^m}\,|\,\sigma_0>\tau_{a^{m-1}})\times
    a^{(m-1)\delta}
    P_k^V(\sigma_0>\tau_{a^{m-1}})\\=\dots&=a^{\ell
 \delta}P_k^V(\sigma_0>\tau_{a^\ell})\times
    \prod_{j=\ell+1}^m a^\delta
    P_k^V(\sigma_0>\tau_{a^j}\,|\,\sigma_0>\tau_{a^{j-1}}).
  \end{align*}
  Since all terms in the last product are strictly positive and $a^{\ell
 \delta}P_k^V(\sigma_0>\tau_{a^\ell})$ does not depend on $m$, the
  convergence (\ref{gak}) follows from (\ref{ser}).
\end{proof}

\begin{proof}[Proof of Theorem~\ref{X}]
  We will show that $\lim\limits_{n \rightarrow \infty } n^\delta
  P^V_0 (\sigma _0 > n) = C_3C_5$, where $C_3$ and $C_5$ are the same as in
  Lemma~\ref{difext} and condition (C). 

  We begin with a lower bound. Fix positive $\epsilon$ and $\beta \ll
  \epsilon$. We have
\[
P^V _0(\sigma _0 > n) \geq  P^V _0(\tau _{n
  \epsilon} < \sigma _ 0) P^V_{\epsilon n}(\sigma_0 > n )\ge 
P^V _0(\tau _{\epsilon n} < \sigma _ 0) P^V_{\epsilon n}(\sigma_{\beta
  n} > n ).
\]
By (C), Lemma~\ref{weakcon}, and scaling (Lemma \ref{elem} (i)) 
\[
\liminf_{n \rightarrow \infty } n ^\delta P^V _0(\sigma _0 > n)\ge 
C_5\,\epsilon^{-\delta} P^Y_{1}\left(\tau^Y_{\beta/\epsilon}
 >\epsilon^{-1}\right).
\]
Letting $\beta\to 0$ and then $\epsilon\to 0$ we obtain via
Lemma~\ref{difext} that \[\liminf_{n \rightarrow
  \infty } n ^\delta P^V _0(\sigma _0 > n) \geq C_3 C_5 .\]

The upper bound is slightly more complicated. First, notice that
\[n^\delta P^V_0(\sigma_0>n,\,\tau_{\epsilon n} >\sigma_0)\le n^\delta
P^V_0\left(\sum_{i=1}^{\sigma_0}\I_{\{V_i <\epsilon n\}}>n\right).\]
By Proposition~\ref{NEW} the right-hand side becomes negligible as
$n\to\infty$ and then $\epsilon \to 0$. Thus, it is enough to estimate $n^\delta
P^V_0(\sigma_0>n,\,\tau_{\epsilon n} <\sigma_0)$.
Let $R\in(1,3/2)$. Then
\begin{align*}
n ^\delta P^V _0(\sigma _0 > n,\tau_{\epsilon n}<\sigma_0)&\leq 
n ^\delta P^V _0(\sigma_0>\tau_{\epsilon n} >(R-1)n)\\ &+  
n ^\delta P^V _0(V_{\tau_{\epsilon n }} > R \epsilon n,\tau_{\epsilon
n}<\sigma_0 )\\&+ 
n ^\delta P^V _0( \sigma_0 - \tau_{\epsilon n} > (2-R)n,
V_{\tau_{\epsilon n }}\leq R \epsilon n ).
\end{align*}
By Proposition~\ref{NEW} the first term on the right-hand side
vanishes for every fixed $R>1$ when we let $n\to \infty$ and then
$\epsilon\to 0$.  By Lemma~\ref{overshoot} the $\limsup_{n\to\infty}$
of the second term is zero.  Thus it will be sufficient to bound the
last term.  For $\beta\ll\epsilon$ let us define $\sigma^{\epsilon
  n}_{\beta n} $ to be the the first time after $\tau_{\epsilon n} $
that $V$ falls below $\beta n$.  Then the last term is bounded above
by
\begin{multline*}
 n ^\delta \left(P^V _0( \sigma_0 - \sigma^{\epsilon n}_{\beta n}  
> (R-1)n)+ 
P^V _0(\sigma^{\epsilon n}_{\beta n}  - \tau_{\epsilon n} > (3-2R)n,
V_{\tau_{\epsilon n }}\leq R \epsilon n ) \right) \\\le \left(P^V _{\beta n}(
\sigma_0> (R-1)n)+  P^V _{R\epsilon n}(\sigma_{\beta n} > (3-2R)n)\right)n
^\delta P^V_0(\tau_{\epsilon n}<\sigma_0).
\end{multline*}
Taking $\limsup_{n\to\infty}$ and then letting $\beta\to 0$ we obtain
(by Proposition~\ref{T}, Lemma~\ref{weakcon}, and (C)) the following
upper bound for $\limsup_{n \rightarrow \infty }n ^\delta P^V _0(
\sigma_0 - \tau_{\epsilon n} > (2-R)n, V_{\tau_{\epsilon n }}\leq R
\epsilon n )$,
\[C_5\epsilon^{-\delta}P_{R\epsilon}^Y(\tau_0>(3-2R))=
C_5\epsilon^{-\delta}P_1^Y(\tau_0>\epsilon^{-1}(3-2R)/R).\] As $\epsilon\to 0$
and then $R\to 1$, the latter expression converges by Lemma~\ref{difext} to
$C_5C_3$. This completes the proof.
\end{proof}

\section{Proof of Theorem~\ref{mainth}}\label{1from2}

Let $\delta>2$. By (\ref{Tn}) and (\ref{irrel})), it is enough to show
that as $n\to\infty$
\begin{equation}
 \label{ed}\frac{2\sum_{k=0}^n D_{n,k}-(v^{-1}-1)n}{n^{2/\delta}}\
 \overset{\mathcal{D}}{=}\ \frac{2\sum_{j=0}^n
 V_j-(v^{-1}-1)n}{n^{2/\delta}}
\end{equation}
converges in distribution to $Z_{\delta/2,b}$ for some $b>0$. Define
the consecutive times when $V_j=0$,\[\sigma_{0,0}=0,\quad
\sigma_{0,i}=\inf\{j>\sigma_{0,i-1}\,|\,V_j=0\},\quad i\in\IN,\] the
total progeny of $V$ over each lifetime,
$S_i=\sum_{j=\sigma_{0,i-1}}^{\sigma_{0,i}-1}V_j,\ i\in\IN$, and the
number of renewals up to time $n$, $N_n=\max\{i\ge
0\,|\,\sigma_{0,i}\le n\}$. Then
$(\sigma_{0,i}-\sigma_{0,i-1},S_i)_{i\ge 1}$ are i.i.d.\ under
$P^V_0$. Moreover,
$\sigma_{0,i}-\sigma_{0,i-1}\overset{\mathcal{D}}{=}\sigma^V_0$ and
$S_i\overset{\mathcal{D}}{=}S^V$, $i\in\IN$. By Theorem~\ref{progeny} the
distribution of $S^V$ is in the domain of attraction of the law of
$Z_{\delta/2,\tilde{b}}$ for some $\tilde{b}>0$ (see, for example \cite{Dur96},
Chapter 2, Theorem~7.7.) Since by Theorem~\ref{X} (in fact, the upper
bound (A) is sufficient) the second moment of $\sigma^V_0$ is finite,
it follows from standard renewal theory (see, for example,
\cite{Gu88}, Theorems~II.5.1 and II.5.2) that
\[\frac{N_n}{n}\overset{\mathrm{a.s.}}{\to}\lambda:=
\left(E^V_0\sigma_0\right)^{-1},\] and for each $\epsilon>0$ there is
$c_5>0$ such that $P^V_0\left(|N_n-\lambda
  n|>c_5\sqrt{n}\right)<\epsilon$ for all sufficiently large
$n$. Using the fact that $T_n/n\to v^{-1}$ a.s.\ as $n\to\infty$ and
relations (\ref{Tn}) and (\ref{ed}) we get that
$E^V_0S_i=(v^{-1}-1)/(2\lambda)$.

\textit{Proof of part (i).} Let $\delta\in(2,4)$. We have
\begin{equation*}
 \frac{\sum_{j=0}^nV_j-(v^{-1}-1)n/2}{n^{2/\delta}}
=\frac{\sum_{i=1}^{N_n}(S_i-E^V_0S_i)}{n^{2/\delta}}+
E^V_0S_1\,\frac{N_n-\lambda n}{n^{2/\delta}}+\frac{\sum_{j=\sigma_{0,N_n}}^n
V_j}{n^{2/\delta}}.
\end{equation*}
By Theorem I.3.2, \cite{Gu88}, the first term converges in
distribution to $Z_{\delta/2,\tilde{b}}$. The second term converges to
zero in probability by the above mentioned facts from renewal
theory. The last term is bounded above by $S_{N_{n+1}}/n^{2/\delta}$,
which converges to zero in probability.  This finishes the proof of
(\ref{itime}), which immediately gives (\ref{ix}) with $X_n$ replaced
by $\sup_{i\le n}X_i$, since $\{\sup_{i\le n} X_i< m\}=\{T_m>n\}$.

Next we show (\ref{ix}) with $X_n$ replaced by $\inf_{i\ge n}X_i$.  The proof is
the same as in, for example, \cite{BS08b}, p.\,849. We observe that for all
$m,n,p\in\IN$
\[\{\sup_{i\le n}X_i<m\}\subset\{\inf_{i\ge n}X_i<m\}\subset \{\sup_{i\le
n}X_i<m+p\}\cup\{\inf_{i\ge T_{m+p}}X_i<m\}.\]
The following lemma completes the proof of part (i).
\begin{lemma}
  \label{inf}
  \[\lim_{k\to\infty}\sup_{n\ge 1}P_0\left(\inf_{i\ge
      T_n}X_i<n-k\right)=0.\]
\end{lemma}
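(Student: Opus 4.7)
The plan is to use the standard right-regeneration structure of the transient ERW and reduce the question to a tail estimate on the inter-regeneration gap containing $n$.

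First, let $R_1<R_2<\cdots$ be the right-regeneration times of $X$ (times $R$ with $X_j<X_R$ for $j<R$ and $X_j\ge X_R$ for $j>R$). Since $\delta>2$ forces a positive speed $v>0$ and the walk is transient to the right, these times are a.s.\ finite and countably infinite, and the increments $G_{i+1}:=W_{i+1}-W_i$ of the positions $W_i:=X_{R_i}$ are, for $i\ge 1$, i.i.d.\ copies of some random variable $G$ with $\IE[G]<\infty$. Setting $W_0:=0$, I would introduce the random index $I(n):=\max\{i\ge 0\mid W_i\le n\}$ and the ``post-$T_n$ backtrack'' $\eta_n:=n-\inf_{j\ge T_n}X_j$; the goal is to show $\sup_{n\ge 1}P_0(\eta_n>k)\to 0$ as $k\to\infty$.

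The key pathwise bound would be: when $I(n)\ge 1$, the walk lies strictly below $W_{I(n)}\le n$ before time $R_{I(n)}$, so $T_n\ge R_{I(n)}$, and the regeneration property then yields $X_j\ge W_{I(n)}$ for all $j\ge T_n$, whence $\eta_n<G_{I(n)+1}$. In the exceptional case $I(n)=0$ (i.e.\ $W_1>n$) the crude estimate $\eta_n\le W_1+|\inf_{j\ge 0}X_j|$ is $n$-independent and a.s.\ finite, so its tail in $k$ vanishes.

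It then remains to control $\sup_n P_0(I(n)\ge 1,\,G_{I(n)+1}>k)$ by a renewal computation. I would expand it as
\[
\sum_{i\ge 1}\IE\!\left[\I\{W_i\le n\}\,P(G>(n-W_i)\vee k)\right],
\]
using independence of $G_{i+1}$ from $\mathcal{F}_{R_i}$, and split the sum at $W_i=n-k$. The second range contains at most $k$ indices (because gaps are $\ge 1$), so contributes at most $k\,P(G>k)$, which tends to $0$ as $k\to\infty$ since $\IE[G]<\infty$ forces $kP(G>k)\to 0$. The first range is bounded by $\sum_{j\ge k}P(G>j)$ (after reindexing by $j=n-W_i$), which is the tail of $\IE[G]=\sum_{j\ge 0}P(G>j)<\infty$ and also vanishes; crucially, neither bound depends on $n$.

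The only mildly technical point will be justifying the i.i.d.\ right-regeneration structure and finiteness of $\IE[G]$ in the present generality (cookies may be negative); both follow from transience to the right and positive speed (Theorem 1.1) together with the branching-process construction of Section~\ref{bp}.
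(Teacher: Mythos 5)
Your approach is correct in outline but follows a genuinely different route from the paper's. The paper avoids regeneration-time machinery altogether: its Step~1 is a translation-invariance argument (citing Lemma~10 of \cite{Ze05}) showing $P_{k,k}=P_{m+k,k}$, which collapses $\sup_{n\ge 1}$ to $\max_{1\le n\le k}$; Steps~2--3 then finish with a short monotonicity-plus-transience compactness argument that uses nothing beyond the a.s.\ finiteness of $\inf_{i\ge T_n}X_i$ and the fact that $X_i\to\infty$. Your proof instead builds a renewal decomposition along right-regeneration positions and reduces the uniform-in-$n$ bound to the tail estimate $kP(G>k)+\sum_{j\ge k}P(G>j)\to 0$, using $\IE[G]<\infty$. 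The pathwise bound $\eta_n<G_{I(n)+1}$ on $\{I(n)\ge 1\}$, the handling of $I(n)=0$, and the renewal computation (splitting at $W_i=n-k$, using that the $W_i$ are strictly increasing so that at most one index lands at each level) are all correct.

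Two caveats are worth flagging, because they mean your argument rests on more machinery than the paper's and is slightly narrower in scope. First, you dismiss establishing the i.i.d.\ regeneration structure for ERW with possibly negative cookies as a ``mildly technical point,'' but the paper deliberately avoids it and works entirely through the branching process $V$; the regeneration structure does hold here (before and after a regeneration time the walk consumes cookies in disjoint half-lines, and the cookie piles are i.i.d.\ across sites), but it is a nontrivial piece of structure that you would need to build or cite, including the non-stopping-time issue and the shifted i.i.d.\ increments for $i\ge 1$. Second, you derive $\IE[G]<\infty$ from $v>0$, which requires $\delta>2$. Lemma~\ref{inf} is also invoked in Remark~\ref{alt} to recover Theorem~\ref{BS} for $\delta\in(1,2]$, where $v=0$. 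The finiteness $\IE[G]<\infty$ in fact holds for all $\delta>1$ via the branching correspondence $G\overset{\mathcal{D}}{=}\sigma^V_0$ together with Theorem~\ref{X} (or just condition~(A)), so the gap is fixable, but the positive-speed justification as stated does not cover the whole range in which the lemma is used, whereas the paper's proof needs only transience.
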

\noindent We postpone the proof of this lemma until the end of the section.

\textit{Proof of part (ii).} Let $\delta=4$. Theorem~\ref{progeny}
implies that the distribution of $S^V$ is in the domain of attraction
of the normal distribution (\cite{Fe71}, Chapter~XVII.5). Norming
constants are easily computed to be (see \cite{Fe71}, Chapter~XVII.5,
(5.23) with $C=1$) $\sqrt{C_2n\log n}$. The constant $b$, which
appears in the statement is equal to $C_2/2$. Relations (\ref{iitime})
and (\ref{iix}) follow in the same way as for part (i). \qed

\begin{proof}[Proof of Lemma~\ref{inf}]
Let  $P_{n,k}:=P_0\left(\inf_{i\ge T_n}X_i<n-k\right).$

   \textit{Step 1.}  The supremum over $n\ge 1$ can be reduced to the
  maximum over $n\in\{1,2,\dots,k\}$: $\sup_{n\ge
    1}P_{n,k}=\max_{1\le n\le k}P_{n,k}.$

  Indeed, consider $P_{k,k}$ and $P_{m+k,k}$ for $m\ge 1$. The
  corresponding events $\{\inf_{i\ge T_k}X_i<0\}$ and $\{\inf_{i\ge
    T_{k+m}}X_i<m\}$ depend on the behavior of the process only at
  times when $X_i$ is in $[0,\infty)$ and $[m,\infty)$ respectively.
  But at times $T_0$ and $T_m$ the walk is at $0$ and $m$
  respectively, and the distributions of the environments starting from
  the current point to the right of it are the same under $P_0$. We
  conclude that $P_{k,k}=P_{m+k,k}$.  This is essentially the content
  of Lemma~10 from \cite{Ze05}.  The proof does not use the positivity of
  cookies so it can be applied here.

  \textit{Step 2.} We list four elementary properties of
  $\{P_{n,k}\}$, $n,k\ge 1$.
 \begin{itemize}
\item [(a)] $P_{n,k}\ge P_{n,k+m}$ for all $1\le n\le k$ and $m\ge 0$;
\item [(b)] $P_{n,k+m}$ converges to $0$ as $m\to\infty$ for each
  $k\ge n\ge 1$;
\item [(c)] $P_{n,k}\ge P_{n+m,k+m}$ for all $n\le k$ and $m\ge 0$;
\item [(d)] $P_{n+m,k+m}$ converges to $0$ as $m\to\infty$ for each
  $k\ge n\ge 1$.
\end{itemize}
Inequality (a) is obvious. Part (b) follows from the transience of
$X$. Namely,  $\inf_{i\ge T_n} X_i>-\infty$ a.s.\ but $n-(k+m)\to-\infty$ as
$m\to\infty$. Inequality (c) is also obvious: since $T_n<T_{m+n}$ we
have
\begin{equation*}
  \{\inf_{i\ge
    T_n}X_i<n-k\}
  \supset \{\inf_{i\ge
    T_{n+m}}X_i<n-k\}=\{\inf_{i\ge
    T_{n+m}}X_i<(n+m)-(k+m)\}.
\end{equation*}
The convergence in (d) again follows from the
transience: $X_i\to\infty$ as $i\to\infty$ a.s.\ implies that
$\inf_{i\ge T_{m+n}}X_i\to\infty$ as $m\to\infty$ a.s.\ but $(k+m)-(n+m)$
stays constant.

\textit{Step 3.} Take any $\epsilon>0$ and using (d) choose an $m$ so
that $P_{m,m}<\epsilon$. Properties (a) and (c) imply that
$P_{n,n+i}<\epsilon$ for all $i\ge 0$ and $n\ge m$. Using (b),
for $n=1,2,\dots,m-1$ choose $k_n$ so that $P_{n,k_n}<\epsilon$. Let
$K=\max_{1\le n\le m} k_n$ (naturally, we set $k_m=m$). Then
$P_{n,k}<\epsilon$ for all $n\le k$ and $k\ge K$ that is $\max_{1\le
  n\le k} P_{n,k}<\epsilon$ for all $k\ge K$.
\end{proof}

\begin{remark}\label{alt}
{\em Theorem~\ref{X} and Theorem~\ref{progeny} imply Theorem~\ref{BS} for
general cookie environments satisfying conditions (A1) and (A2). The proof is
the same as in Section 6 of \cite{BS08b}) and uses Lemma~\ref{inf}. }
\end{remark}

\appendix
\section{Proofs of technical results}

We shall need the following simple lemma.
\begin{lemma}
  \label{ldg}
  Let $(\xi_i)_{i\in\IN}$ be i.i.d.\ $\mathrm{Geom}(1/2)$ random
    variables. There exists a constant $c_6>0$ such that for all
    $x,y\in\IN$
\[P\left(\sum_{i=1}^x (\xi_i -1) \ \ge y\right) \le e^{-c_6
    y^2/x} \vee e^{-c_6y}.\]
\end{lemma}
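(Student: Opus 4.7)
The plan is the standard Chernoff/Bernstein argument adapted to the two regimes ``small $y$'' and ``large $y$''. Since $\xi_i$ is $\mathrm{Geom}(1/2)$ on $\{0\}\cup\IN$, a direct computation gives the moment generating function
\[
\phi(t) := \IE\!\left[e^{t(\xi_1-1)}\right] = \frac{e^{-t}}{2-e^t}, \qquad t<\log 2,
\]
and one checks that $\IE[\xi_1-1]=0$ and $\mathrm{Var}(\xi_1-1)=2$. Expanding near $t=0$ yields a constant $C>0$ and a threshold $t_0\in(0,\log 2)$ such that $\log\phi(t)\le Ct^2$ for all $t\in[0,t_0]$. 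By independence and Markov's inequality,
\[
\IP\!\left(\sum_{i=1}^x(\xi_i-1)\ge y\right)\le \exp\bigl(-ty+x\log\phi(t)\bigr)\le \exp\bigl(-ty+Cxt^2\bigr)
\]
for every $t\in[0,t_0]$.

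First I would handle the \emph{small} regime $y\le 2Ct_0\,x$ by minimizing the exponent in $t$: the optimal choice is $t=y/(2Cx)$, which lies in $[0,t_0]$ by the case hypothesis, and the bound becomes $\exp(-y^2/(4Cx))$. Then, for the \emph{large} regime $y>2Ct_0\,x$, I would simply freeze $t=t_0$ in the Chernoff bound. Since $Cxt_0^2\le t_0 y/2$ under the case hypothesis, the exponent is at most $-t_0 y/2$, giving the bound $\exp(-t_0 y/2)$.

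Combining the two regimes and setting $c_6:=\min\bigl(1/(4C),\,t_0/2\bigr)$ yields
\[
\IP\!\left(\sum_{i=1}^x(\xi_i-1)\ge y\right)\le e^{-c_6 y^2/x}\vee e^{-c_6 y},
\]
as claimed. There is no real obstacle here; the only mild care is to make sure the optimal $t$ stays below $t_0$ in the small-$y$ regime (so that the quadratic upper bound on $\log\phi$ applies) and that $\phi(t_0)$ is finite, both of which are secured by the explicit formula for $\phi$.
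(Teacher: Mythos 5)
Your proposal is correct and essentially reproduces the paper's argument: a Chernoff/Markov bound using the moment generating function of $\xi_1-1$, a quadratic upper bound $\log\phi(t)\le Ct^2$ on a fixed interval near $0$, and a two-case optimization in $t$ (interior optimum for small $y/x$, boundary value for large $y/x$). The only cosmetic difference is that the paper writes $\phi$ for the log-MGF and packages the case split as a single lower bound $\max_{t\in[0,(\log 2)/2]}(yt-Cxt^2)\ge\frac14\min\{y^2/(Cx),\,y\log 2\}$.
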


\begin{proof}
  Let $\phi(t)=\log
  Ee^{t(\xi_1-1)}=-t-\log(2-e^t), \
  t\in[0,\log 2)$. Then $\phi'(0)=0$ and there is a constant
  $C>0$ such that $\phi(t)\le Ct^2$ for all
  $t\in[0,(\log2)/2]$. By Chebyshev's inequality, for each
  $t\in[0,(\log 2)/2]$
\[\log P\left(\sum_{i=1}^x (\xi_i -1) \ge y\right) \le
x\phi(t)-yt\le Cxt^2-yt,\] and,
therefore,
\begin{align*}
  \log P\left(\sum_{i=1}^x (\xi_i -1) \ge y\right) &\le
-\max_{t\in[0,(\log 2)/2]} (yt-Cxt^2)\\&\le
-\frac14\min\left\{\frac{y^2}{Cx},y\log 2\right\}.
\end{align*}
This gives the desired inequality with
$c_6=\frac14\min\left\{C^{-1},\log 2\right\}$.
\end{proof}

\begin{proof}[Proof of Lemma~\ref{overshoot}]
  We shall prove part (i). The proof of part (ii) is very similar and
  is omitted. 

  To prove part (i) it is enough to show the existence of $c_1,c_2>0$
  such that (\ref{over}) holds for all $x\ge 2M+1$ and $y\ge 6M$. The
  extension to all $y\ge 0$ is done simply by replacing the constant
  $c_1$ with \[\max\left\{c_1,
    \left(e^{-c_2(6M)^2/(2M+1)}+e^{-6Mc_2}\right)^{-1}\right\},\]
  since the left hand side of (\ref{over}) is at most $1$ and the
  right hand side of (\ref{over}) is increasing in $x$ and decreasing
  in $y$. We have
\begin{align*}
  P_z(&V_{\tau_x}>x+y,\tau_x<\sigma_0)=\sum_{n=1}^\infty
  P_z(V_{\tau_x}>x+y,\tau_x=n,\tau_x< \sigma_0)\\=&\sum_{n=1}^\infty
  P_z(V_n>x+y,V_n\ge x, \tau_x=n,\tau_x<
  \sigma_0)\\=&\sum_{n=1}^\infty \sum_{r=1}^{x-1}P_z(V_n>x+y,V_n\ge x,
  V_{n-1}=r,0<V_j<x,j\in\{1,\dots,n-2\}) \\=& \sum_{n=1}^\infty
  \sum_{r=1}^{x-1}P_z(V_n>x+y\,|\,V_n\ge
  x,V_{n-1}=r)\\&\makebox[2.7cm]{\ }\times P_z(V_n\ge x,
  V_{n-1}=r,0<V_j<x,j\in\{1,\dots,n-2\})\\=&\sum_{n=1}^\infty
  \sum_{r=1}^{x-1}\frac{P_z(V_n>x+y\,|\,V_{n-1}=r)}{P_z(V_n\ge
    x\,|\,V_{n-1}=r)}P_z(\tau_x<\sigma_0,\tau_x=n,
  V_{n-1}=r)\\
=&\sum_{n=1}^\infty
  \sum_{r=1}^{x-1}\frac{P_r(V_1>x+y)}{P_r(V_1\ge
    x)}P_z(\tau_x<\sigma_0,\tau_x=n, V_{n-1}=r)\\\le& \max_{0\le
    r<x}\frac{P_r(V_1>x+y)}{P_r(V_1\ge x)}\sum_{n=1}^\infty
  \sum_{r=1}^{x-1}P_z(\tau_x<\sigma_0,\tau_x=n,
  V_{n-1}=r)\\=&\max_{0\le r<x}\frac{P_r(V_1>x+y)}{P_r(V_1\ge
    x)}P_z(\tau_x<\sigma_0).
\end{align*}
Thus, \[\max_{0\le z<x} P_z(V_{\tau_x}>x+y\,|\,\tau_x<\sigma_0)\le
\max_{0\le z<x}\frac{P_z(V_1>x+y)}{P_z(V_1\ge x)}.\] To estimate the
last ratio we recall that $V_1$ is the sum of the number of offspring
produced by each of $z$ particles and by the immigrant particle. The
offspring distribution of at most $M$ particles can be affected by the
cookies. For notational convenience we shall use $\xi_j$,
$j=1,2,\dots,z+1$, to denote the number of offspring of the $j$-th
particle. All that was said in Section 2 about $\zeta^{(k)}_m$ applies
now to to $\xi_m$, $m=1,2,\dots,M$, and $\xi_m$, $m>M$, are just
i.i.d.\ $\mathrm{Geom}(1/2)$ random variables. Abbreviate $\xi_j-1$ by
$\xi'_j$, $j\in\IN$. With this notation,
$V_1=z+1+\sum_{m=1}^{z+1}\xi'_m $. Let ${\cal
  G}_0=\{\emptyset,\Omega\}$, ${\cal G}_n$, $n\in\IN$, be the
$\sigma$-algebra generated by $(\xi_m)_{m\le n}$,
and \[N_x=\inf\left\{n\ge 1\,\Big|\,z+1+\sum_{m=1}^n\xi'_m \ge
  x\right\}.\] Then $\{V_1>x+y\}\subset\{V_1\ge x\}\subset\{N_x\le
z+1\}$ and
\begin{equation*}
  \frac{P_z(V_1>x+y)}{P_z(V_1\ge x)}= \frac{P_z(V_1>x+y)} {E_z(\I_{\{N_x\le
      z+1\}}P_z(V_1\ge x\,|\,{\cal G}_{N_x}))}
\end{equation*}
The proof of part (i) will be complete as soon as
we show that
\begin{itemize}
\item [(a)] the is a constant $c_7>0$ such that $P_z(V_1\ge x\,|\,{\cal
    G}_{N_x})\ge c_7$ for all $0\le z<x$ and $x\ge 2M+1$;
\item [(b)]there is a constant $c_8>0$ such that for
all $x\ge 2M+1$ and $y\ge 6M$
  \begin{equation}
    \label{nx}
    P_z(V_1>x+y\,|\,N_x\le z+1)\le \left(e^{-c_8y}\vee
  e^{-c_8y^2/x}\right).
  \end{equation}
\end{itemize}

Proof of (a). Consider two cases: (i) $z\ge M$ and (ii) $x-z\ge
M+1$. In case (i) at least one of $\xi_m$, $1\le m\le
z+1$, has a $\mathrm{Geom}(1/2)$ distribution. The distribution of 
each $\xi_m$ is supported on the non-negative integers, and,
trivially, $\sum\limits_{m=1}^M \xi'_m \ge -M$. Therefore, on the
event $\{N_x\le z+1\}$
\begin{multline}
  \label{M}
 P_z(V_1\ge x\,|\, \mathcal{G}_{N_x}) \ge
 P_z\left(\sum_{m=N_x+1}^{z+1}\xi'_m \ge 0 \,\Big|\,
 \mathcal{G}_{N_x}\right)
\ge \min_{n\ge
   M+1}P\left(\sum_{m=M+1}^n\xi'_m \ge M\right)\ge c_7>0,
\end{multline}
since for all $n\ge M+1$ the probabilities under the minimum sign are
strictly positive and by the Central Limit Theorem \[\lim_{n\to\infty}
P\left(\sum_{m=M+1}^n\xi'_m \ge M\right)=\frac12.\] We recall that
when $N_x=z+1$ the first sum in (\ref{M}) is empty and the probability
above is equal to 1, so the lower bond holds for this case as well.

Case (ii) is even simpler. We have $x-z\ge M+1$. This implies that all
cookies at site $0$ have already been used for offspring of the first
$N_x$ particles. Therefore, the sum $\sum_{m=N_x+1}^{z+1}\xi'_m $ (when
it is not zero) is equal to the non-trivial sum of centered
$\mathrm{Geom}(1/2)$ random variables. Therefore, in case (ii) we
again get (\ref{M}) where the rightmost $M$ is replaced by $0$.

Proof of (b). Observe that 
\begin{equation*}
  \{V_1>x+y\}\subset
\Big\{\sum_{m=N_x+1}^{z+1}\xi'_m \ge
y/2\Big\}\cup\Big\{z+1+\sum_{m=1}^{N_x}\xi'_m -x\ge y/2\Big\}.
\end{equation*}
Using the assumption $y\ge 6M$ and Lemma~\ref{ldg} we get
\begin{align*}
  P_z\Big(\sum_{m=N_x+1}^{z+1}\xi'_m \ge y/2,N_x\le
  z+1\Big)&= E_z\Big(\I_{\{N_x\le
    z+1\}}P_z\Big(\sum_{m=N_x+1}^{z+1}\xi'_m \ge y/2\,\Big|\, {\cal
    G}_{N_x}\Big)\Big)\\\le\max_{1\le n\le
    x}P_z\Big(\sum_{m=M+1}^{M+n}\xi'_m &\ge y/2-M\Big)P_z(N_x\le
  z+1)\\\le\max_{1\le n\le
    x}P_z\Big(\sum_{m=M+1}^{M+n}\xi'_m &\ge y/3\Big)P_z(N_x\le
  z+1)\\&\le \left(e^{-c_6y^2/(9x)}\vee
    e^{-c_6y/3}\right)P_z(N_x\le z+1).
\end{align*}
Finally, we estimate the probability of the second set:
\begin{align*}
  &P_z\Big(z+1+\sum_{m=1}^{N_x}\xi'_m -x\ge y/2,N_x\le z+1\Big)
  \\&=\sum_{n=1}^{z+1}P_z\Big(\sum_{m=1}^{N_x}\xi'_m \ge
  y/2+x-z-1,N_x=n\Big)\\&=\sum_{n=1}^{z+1}\sum_{\ell=1-n}^{x-z-2}
  P_z\Big(\sum_{m=1}^n\xi'_m \ge
  y/2+x-z-1,N_x=n,\sum_{m=1}^{n-1}\xi'_m =\ell\Big)\\
  &=\sum_{n=1}^{z+1}\sum_{\ell=1-n}^{x-z-2}
  P_z\Big(\xi'_n \ge
  y/2+x-z-1-\ell\,\Big|\,N_x=n,\sum_{m=1}^{n-1}\xi'_m
  =\ell\Big)\\&\makebox[7.3cm]{\ } \times
  P_z\Big(N_x=n,\sum_{m=1}^{n-1}\xi'_m =\ell\Big)
  \\&=\sum_{n=1}^{z+1}\sum_{\ell=1-n}^{x-z-2} P_z\Big(\xi_n\ge
  y/2+x-z-\ell\,\Big|\,\xi_n\ge x-z-\ell,N_x=n,\sum_{m=1}^{n-1}\xi'_m
  =\ell\Big)\\&\makebox[7.3cm]{\ } \times
  P_z\Big(N_x=n,\sum_{m=1}^{n-1}\xi'_m
  =\ell\Big)\\&\le P_z(\xi_{M+1}\ge y/2-M)\sum_{n=1}^{z+1}\sum_{\ell=1-n}^{x-z-2}
  P_z\Big(N_x=n,\sum_{m=1}^{n-1}\xi'_m
  =\ell\Big)\\&\makebox[7.8cm]{\ } \le 2^{-y/3}P_z(N_x\le z+1).
\end{align*}
This finishes the proof.
\end{proof}

\begin{proof}[Proof of part (ii) of Lemma~\ref{scale}]
Let $s\in C^\infty_0([0,\infty))$ be a non-negative 
function such that $s(x)=x^{\delta}$ on $(2/(3a),3a/2)$.
Fix an $n$ such that $a^{n-1}>M$ and define the process $U^n:=(U^n_k)_{k\ge 0}$
by
\[U^n_k = s\left(\frac{V_{k \wedge \gamma}}{a^n}\right).\] We shall show that
when $n$ is large $U^n$ is close
to being a martingale (with respect to its natural filtration
$(\mathcal{F}_k)_{k\ge 0}$). $U^n$ is just a discrete 
version of the martingale used in the proof of Lemma~\ref{elem}.

On the event $\{\gamma>k\}$ we have
\[E(U^n_{k+1}\,|\, \mathcal{F}_k) =
E\left(s\left(\frac{V_{k+1}}{a^{n}}\right)\,\Big|\, \mathcal{F}_k\right) =
E \left(s\left(\frac{V_{k}}{a^n} + \frac{V_{k+1} -
      V_{k}}{a^n}\right)\, \Big|\, \mathcal{F}_k\right),\] and
\begin{multline*}
E(U^n_{k+1}\,\vert\, \mathcal{F}_k) - U^n_{k}= \\E \left[s'
\left(\frac{V_{k}}{a^{n}}\right) \frac{V_{k+1} - V_{k}}{a^{n}}\,\Big\vert\,
\mathcal{F}_k
\right] + \frac{1}{2} E \left[s'' \left(\frac{V_{k}}{a^{n}}\right)
\frac{{(V_{k+1}
    -V_{k})}^{2}}{a^{2n}} \,\Big\vert\, \mathcal{F}_k\right] + r^n_k=\\
    -\frac{\delta -1}{a^{n}}\, s' \left(\frac{V_{k}}{a^n}\right)+\frac{1}{2} E
\left[s'' \left(\frac{V_{k}}{a^{n}}\right) \frac{{(V_{k+1}
    -V_{k})}^{2}}{a^{2n}} \,\Big\vert\, \mathcal{F}_k\right] + r^n_k,
\end{multline*}
where $r^n_k$ is the error, which we shall estimate later.
By (\ref{diffV}), the second term on the right hand side of the above equality
is equal to 
\[\frac{1}{2a^{2n}}\,s'' \left(\frac{V_{k}}{a^{n}}\right) E
\left[\left(1+\sum_{m=1}^M \zeta^{(k) \prime}_m + \sum_{m=1}^{V_{k} - M+1}
\xi^{(k)\prime}_{m}\right)^2\Big\vert\,\mathcal{F}_k\right],\]
where $\zeta_m^{(k)\prime}=\zeta_m^{(k)}-1$ and
$\xi_m^{(k)\prime}=\xi_m^{(k)}-1$.
Since $\sum_{m=1}^M \zeta_m^{(k) \prime}$ is independent from all 
$\xi_{m}^{(k)\prime}$, $m\ge 1$, and $V_k$, the last formula reduces to
\[\frac{1}{2a^{2n}} \, s'' \left( \frac{V_{k}}{a^n}\right)\left( 2(V_{k}-M+1) +
E\left[\left(1+\sum_{m=1}^M \zeta^{(k) \prime}_m\right)^{2}\right]\right).\]
Using the fact that $xs''(x)+(1-\delta)s'(x)=0$ for $x\in(2/(3a),3a/2)$ we get
that on the event $\{\gamma>k\}$
\begin{equation*}
  E(U^n_{k+1}\,\vert\, \mathcal{F}_k) - U^n_{k}=
  \frac{1}{a^{2n}} \, s'' \left( \frac{V_{k}}{a^n}\right)\left( 1-M+
    \frac12E\left[\left(1+\sum_{m=1}^M \zeta^{(k)
          \prime}_m\right)^{2}\right]
  \right)+r^n_k.
\end{equation*}
The first term on the right hand side is bounded 
in absolute value by $K_1/a^{2n}$ for some constant $K_1$. Thus it remains to
estimate $r^n_{k}$.  By Taylor's expansion $r^n_k$ is bounded by
\begin{equation*}
\frac{1}{6} \,\|s'''\|_\infty E\left[\left(\frac{|V_{k+1} -
V_{k}|}{a^n}\right)^3 \Big\vert \,\mathcal{F}_k\right] 
\le \frac{1}{6}\,
\|s'''\|_\infty  \left( E \left[\left(\frac{V_{k+1} - V_{k}}{a^n}\right)^{4}
\Big\vert\, \mathcal{F}_k\right] \right)^{3/4}.
\end{equation*}
Writing again the difference  $V_{k+1}-V_k$ in terms of geometric random
variables, using independence of $\sum_{m=1}^M \zeta_m^{(k) \prime}$ from all 
$\xi_{m}^{(k)\prime}$, $m\ge 1$, and the fact that $V_k<a^{n+1}$ on
$\{\gamma>k\}$ we find that 
\[E \left[\left(\frac{V_{k+1} - V_{k}}{a^n}\right)^{4} \Big\vert\,
  \mathcal{F}_k\right]\le \frac{K_2}{a^{2n}},\] and, therefore,
$|r^n_k|\le K_3/a^{3n/2}$. Let $R^n_0=0$ and for $k\ge 1$
set \[R^n_k=\sum_{j=1}^{k\wedge \gamma} \left[\frac{1}{a^{2n}} \, s''
  \left( \frac{V_{j}}{a^n}\right)\left( 1-M+
    \frac12E\left[\left(1+\sum_{m=1}^M \zeta^{(j)
          \prime}_m\right)^{2}\right]\right)+r^n_j\right].\] Then
$U^n_k-R^n_k$ is a martingale with the initial value $U^n_0$. Our
bounds on the increments of the process $(R^n_k)_{k\ge 0}$ and
Proposition~\ref{time} below imply that
\[E|R^n_\gamma| \le \frac{K_4}{a^{3n/2}}\, E\gamma\le \frac{K_5}{a^{n/2}}. \] 
This allows us to pass to the limit as $k\to\infty$ and conclude that
$U^n_0 =  EU^n_\gamma  - ER^n_\gamma$.
Thus,
\begin{align*}
U^n_0 -\frac{K_5}{a^{n/2}} & \le EU^n_\gamma \le 
P\left(V_\gamma \in [a^{n+1},a^{n+1}+ a^{2(n-1)/3} )\right ) s(a+a^{-(n+2)/3}
)\\& + P\left(V_\gamma \in (a^{n-1} -  a^{2(n-1)/3},a^{n-1}  ] \right) s(a^{-1}
) + E\left(U^n_\gamma \I_{\{d(V_\gamma, (a^{n-1},a^{n+1})) \ge  a^{2(n-1)/3}\}}
\right).
\end{align*}
By part (i), we obtain that
\[
P(V_\gamma \geq a^{n+1})\, a ^\delta + P(V_\gamma \leq a^{n-1})\, a ^{-\delta} \geq
U^n_0 - K_6/a^{n/3}.
\]
Similarly we get
\[
P(V_\gamma \geq a^{n+1})\, a ^\delta + P(V_\gamma \leq a^{n-1})\, a ^{-\delta} \leq
U^n_0 +K_7/a^{n/3}.
\]
This completes the proof.
\end{proof}

\begin{proposition} \label{time}
There exists $C_8 \in (0, \infty) $ so that for all $x > 0$, 
\[
\sup _{x \leq y \leq 2x} E^V_y\Bigg( \sum_{r=0} ^{\sigma^V_{x/2}} \I_{V_r \in
[x,2x]}\Bigg) < C_8x.
\]
\end{proposition}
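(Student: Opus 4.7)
The plan is to bound the stronger quantity $E^V_y[\tau]$, where $\tau:=\sigma^V_{x/2}$, since the indicator sum is at most $\tau$. The key input is (\ref{drift}) combined with (\ref{diffV}): for $V_k\ge M$ the geometric contributions $\xi^{(k)}_m-1$ are centered, so
\begin{equation*}
 E\bigl[V_{k+1}-V_k \,\big|\, \mathcal{F}_k\bigr]\ =\ 1-\delta \quad\text{whenever } V_k\ge M,
\end{equation*}
i.e.\ $V$ has a constant downward drift of size $\delta-1>0$ while it stays above $M$. The heuristic $E^V_y[\tau]\approx y/(\delta-1)$ then produces the linear-in-$x$ bound.

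For $x>2M$ this heuristic is easy to turn into a proof. Since $V_j>x/2>M$ for every $j<\tau$, the process
\begin{equation*}
 N_k\ :=\ V_{k\wedge \tau}+(\delta-1)(k\wedge\tau), \qquad k\ge 0,
\end{equation*}
is a martingale: on $\{\tau>k\}$ the conditional increment $E[(V_{k+1}-V_k)+(\delta-1)\mid\mathcal{F}_k]$ vanishes by the drift identity, and on $\{\tau\le k\}$ one has $N_{k+1}=N_k$. Optional stopping at the bounded time $k$ gives $E^V_y[N_{k\wedge\tau}]=y$, and using $V_{k\wedge\tau}\ge 0$,
\begin{equation*}
 (\delta-1)\,E^V_y[k\wedge\tau]\ \le\ y\ \le\ 2x.
\end{equation*}
Since $\tau<\infty$ $P^V_y$-almost surely (noted at the end of Section~\ref{bp}), monotone convergence as $k\to\infty$ yields $E^V_y[\tau]\le 2x/(\delta-1)$.

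For $0<x\le 2M$ one has to worry about the regime $V_k<M$. First note that $V_r$ is a non-negative integer, so $V_r\in[x,2x]$ is impossible when $x\in(0,1/2)$ and the sum is identically zero there. For $x\in[1/2,2M]$, $[y]\le 4M$ and the occupation time is dominated by $\sigma^V_0$; a standard Foster--Lyapunov argument with Lyapunov function $f(y)=y$ (negative drift $-(\delta-1)$ for $y\ge M$ and bounded mean drift for $y<M$, since the first $M$ cookies yield a random variable of finite mean by (A2) and subsequent tosses are Geom($1/2$)) gives $\sup_{0\le z\le 4M}E^V_z[\sigma^V_0]=:C_{\mathrm{small}}<\infty$. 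Because $x\ge 1/2$ in this range, $C_{\mathrm{small}}\le 2C_{\mathrm{small}}x$, and taking $C_8$ larger than both $2/(\delta-1)$ and $2C_{\mathrm{small}}$ finishes the argument.

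The only real work is the one-line verification that $N_k$ is a martingale and the Foster--Lyapunov bound from bounded starting states; neither is deep, but the latter requires a quick check that the offspring variable $\zeta^{(k)}_1$ has finite mean, which uses the geometric tail beyond the $M$-th cookie.
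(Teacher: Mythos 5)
Your proof is correct, and the main case ($x>2M$) takes a genuinely different and arguably cleaner route than the paper. The paper deduces the bound from the exponential occupation-time tail
\[
P^V_y\Bigl(\textstyle\sum_{r=0}^{\sigma^V_{x/2}} \I_{V_r\in[x,2x]} > nx\Bigr)\le(1-c)^n,
\]
where the constant $c>0$ comes from Lemma~\ref{weakcon} (the diffusion limit) together with a compactness argument, and the tail decay comes from the Markov property applied to successive re-entries of $[x,2x]$. You instead observe that $N_k:=V_{k\wedge\tau}+(\delta-1)(k\wedge\tau)$ is a nonnegative martingale as long as the process stays above $M$, and optional stopping plus monotone convergence yields $E^V_y[\sigma^V_{x/2}]\le 2x/(\delta-1)$. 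This is a strictly stronger conclusion (it controls the full exit time, not just the occupation of the single dyadic shell), and it avoids any appeal to the diffusion approximation. The one place where your writeup is thinner than it could be is the small-$x$ regime: Foster's criterion with $f(y)=y$ gives a uniformly bounded expected hitting time of the finite set $\{0,\dots,M-1\}$, and passing from that to a bounded expected hitting time of $\{0\}$ needs either the standard "irreducible + positive recurrent $\Rightarrow$ all mean hitting times finite" fact or a short geometric-trials patch (from any state in $\{0,\dots,M-1\}$ there is, by (A2), a probability bounded below of jumping to $0$ in one step). This is routine, and the paper's own treatment of that range ("usual compactness considerations") is no more detailed, so it is a stylistic remark rather than a gap.
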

\begin{proof}
By the usual compactness considerations and Lemma~\ref{weakcon}, there exists
$c>0$ such that $P^V_y(\sigma^V_{x/2} < x)>c$ for all $x>0$ and $y\in[x,2x]$.
From this and the Markov property applied to successive re-entries to the
interval $[x,2x]$ (see the proof of Proposition~\ref{reentry} for details), we
obtain 
\[
P^V_y\Bigg( \sum_{r=0}^{\sigma^V_{x/2}} \I_{V_r \in [x,2x]} > nx\Bigg) \ \leq \
(1-c)^n,
\]
and the result follows.
\end{proof}

\noindent \textbf{Acknowledgments.}  The authors are grateful to the
Institute of Henri Poincar\'e, where this project was started, for
hospitality. E.~Kosygina also thanks the Institute for Advanced Study
for a stimulating research environment. Her work was partially supported
by NSF grants DMS-0825081 and DMS-0635607.  T.~Mountford was partially
supported
by the C.N.R.S.\ and by the SNSF grant 200020-115964.

\bibliographystyle{plain}

\bigskip

{\sc \small
\begin{tabular}{ll}
Department of Mathematics& \hspace*{17mm}Ecole Polytechnique F\'ed\'erale\\
Baruch College, Box B6-230& \hspace*{17mm}de Lausanne\\
One Bernard Baruch Way&\hspace*{17mm}D\'epartement de math\'ematiques\\
New York, NY 10010, USA&\hspace*{17mm}1015 Lausanne, Switzerland\\
{\verb+elena.kosygina@baruch.cuny.edu+}& \hspace*{17mm}{\verb+thomas.mountford@epfl.ch+}
\end{tabular}\vspace*{2mm}

}

\end{document}